\theoremstyle{break}
\newtheorem{lemma}{Lemma}
\newtheorem{proposition}[lemma]{Proposition}
\newtheorem{theorem}[lemma]{Theorem}
\newtheorem{corollary}[lemma]{Corollary}
\newtheorem{remark}[lemma]{Remark}
\newtheorem{definition}[lemma]{Definition}
\newcommand \QQ {{\mathbb Q}}
\newcommand \PP {{\mathbb P}}
\newcommand{\CC}{\ensuremath{\mathbb{C}}}
\newcommand{\NN}{\ensuremath{\mathbb{N}}}
\newcommand \FF {{\mathbb F}}
\newcommand \GG {{\mathbb G}}
\newcommand \AAA {{\mathbb A}}
\newcommand \RR {{\mathbb R}}
\newcommand \ZZ {{\mathbb Z}}
\newcommand \cE {{\mathcal E}}
\newcommand \cV {{\mathcal V}}
\newcommand{\q}{/\!\!/}
\newcommand{\coker}{\mathrm{coker\,}}
\newcommand{\Res}{\mathrm{Res}}
\newcommand{\gr}{\mathrm{gr}}
\newcommand{\To}{\longrightarrow}
\newcommand{\Xh}{\widehat{X}}
\title[Cohomology of graph hypersurfaces]{Framings for graph hypersurfaces}
\author{Francis Brown and Dzmitry Doryn}
\begin{document}

\begin{abstract}  We present a  method for computing the framing on the cohomology of graph hypersurfaces  defined by the Feynman differential form. This answers a question of Bloch, Esnault and Kreimer in the affirmative for an infinite class of graphs for which the framings 
are Tate motives. Applying this method to the modular graphs of Brown and Schnetz, we find that the Feynman differential form is not of Tate type in general. This finally disproves a folklore conjecture stating that the periods of Feynman integrals of primitive graphs in $\phi^4$ theory  \mbox{factorise} through a  category of mixed Tate motives.
  \end{abstract} 

\maketitle

\section{Introduction}
Let $G$ be a connected graph with $N_G$ edges. Its graph polynomial is defined  by associating a variable $\alpha_e$ to each edge $e$ of $G$, and setting
\begin{equation} \label{intro: PsiGdefinition}
\Psi_G=\sum_{T}\prod _{e\notin T} \alpha_e \quad \in \quad \ZZ[\alpha_1,\ldots,\alpha_{N_G}]
\end{equation}
where the sum runs over the  set of spanning trees $T$ of $G$. It is homogeneous of degree  equal to the number  $h_G$ of independent cycles in $G$.  The
graph hypersurface is  defined to be its zero locus  in projective space
$$X_G = \cV(\Psi_G) \subset \PP^{N_G-1}\ .$$
Following  \cite{Wein, BEK},  define the Feynman differential form to be 
\begin{equation} \label{introomegaGdef}
\omega_G = {\Omega_{N_G} \over \Psi_G^2} \in \Omega^{N_G-1} ( \PP^{N_G-1} \backslash X_G)
\end{equation}
where
$\Omega_{N_G} = \sum_{i=1}^{N_G} (-1)^i \alpha_i d \alpha_1 \wedge \ldots \widehat{d\alpha_i} \wedge \ldots d\alpha_{N_G}$, and let 
$\sigma$ be the coordinate simplex in real projective space $\sigma = \{(\alpha_1 : \ldots : \alpha_{N_G}): \alpha_i \geq 0\} \subset \PP^{N_G-1}(\RR)$.
When $G$ is primitive and overall logarithmically divergent (this means  $N_G=2h_G$ and $N_{\gamma}>2h_{\gamma}$ for all strict subgraphs $\gamma\subsetneq G$), the Feynman integral is 
\begin{equation} \label{introIG}
I_G = \int_{\sigma} {\omega_G}
\end{equation}
and is finite.  All known integrals $I_G$ are integral linear combinations of multiple zeta values \cite{BB, Census}.
The integral $(\ref{introIG})$ can be interpreted as the period of a  mixed Hodge structure $H$ which was defined in \cite{BEK},  called the graph motive. It is obtained by blowing up certain linear subspaces in $\PP^{N_G-1}$
and taking the relative cohomology of the complement of the (strict transform) of the graph hypersurface $X_G$. It is then relatively straightforward to show that the integration domain defines a class
\begin{equation} \label{introBettiframe}
[\sigma] \in \gr^W_0 H^{\vee}_B \cong \QQ(0)\ ,
\end{equation}
which is called  the Betti framing. The nature of the de Rham framing given by the relative cohomology class of the  integrand $ [\omega_G]$ is far from evident.

In \cite{BEK}, after their proof of $(\ref{introBettiframe})$, the authors write:
\\

\emph{`An optimist might hope for a bit more. Whether for all primitive divergent graphs, or for an identifiable subset of them, one would like that the maximal weight piece of $H_B$ should be Tate,
$$\gr^W_{max}  H_B  =\QQ(-p)^{\oplus r}$$
Further, one would like that there should be a rank one sub-Hodge structure $i:  \QQ(- p) \hookrightarrow
\gr^W_{max} H_B$ 	such that the image of $[\omega_G] \in  H_{dR}$ spans $i (\QQ(-p))$.'}
\\

One of the  main results of their paper is the following.

\begin{theorem} \cite{BEK}  Let $X_n$ denote the graph hypersurface for the wheel with $n$ spokes graphs, where $n\geq 3$. Then 
$$H^{2n-1} ( \PP^{2n-1} \backslash X_n) \cong \QQ(3-2n)$$
and $H^{2n-1}_{dR}( \PP^{2n-1} \backslash X_n)\cong \QQ[\omega_G]$ is spanned by  the Feynman differential form.
\end{theorem}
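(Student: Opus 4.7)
The plan is to show that $H^\bullet(\PP^{2n-1}\setminus X_n)$ is mixed Tate by exhibiting $X_n$ as a determinantal variety with an explicit resolution, and then to extract the top-degree piece by a weight count. Labelling the $n$ spokes $x_1,\ldots,x_n$ and the $n$ outer edges $y_1,\ldots,y_n$, the matrix--tree theorem combined with the cyclic symmetry of the wheel gives $\Psi_{W_n} = \det M(\alpha)$ for an explicit $n \times n$ matrix linear in $(x,y)$ of tridiagonal-plus-corner shape. Hence $X_n$ is a determinantal hypersurface, which I would resolve by the incidence variety
$$\widetilde X_n = \bigl\{ ([\alpha],[v]) \in \PP^{2n-1}\times\PP^{n-1} : M(\alpha) v = 0 \bigr\}$$
together with its projections $p_1 \colon \widetilde X_n \to X_n$ (proper and birational, since for generic $[\alpha] \in X_n$ the kernel of $M(\alpha)$ is one-dimensional) and $p_2 \colon \widetilde X_n \to \PP^{n-1}$, whose fibre over $[v]$ is the linear subspace of $\PP^{2n-1}$ cut out by the rows of $M(-)v$. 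This linear map stratifies $\PP^{n-1}$ by rank, and each stratum is itself a determinantal locus with simple equations, hence mixed Tate. The Leray spectral sequence for $p_2$, together with the projective bundle formula on each stratum, exhibits $H^\bullet(\widetilde X_n)$ as a successive extension of pure Tate pieces whose weights and multiplicities can be enumerated.

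Next I would feed this into the localisation long exact sequence for the pair $(\PP^{2n-1}, X_n)$ in the category of mixed Hodge structures, comparing $H^\bullet(\PP^{2n-1}\setminus X_n)$ to $H^\bullet(\PP^{2n-1})$ and to the cohomology supported along $X_n$; the latter is computed from $H^\bullet(\widetilde X_n)$ using the properness of $p_1$ together with a correction for the exceptional locus. All inputs being mixed Tate forces the complement to be mixed Tate. A direct weight count in degree $k=2n-1$, after tracking the contributions from the lower-rank strata of $p_2$, should leave a single one-dimensional piece of weight $4n-6$, which is therefore $\QQ(3-2n)$. To show $[\omega_G]$ generates the de Rham side, I would use the Hodge filtration on $H^{2n-1}_{dR}(\PP^{2n-1}\setminus X_n)$: the double pole of $\omega_G = \Omega_{2n}/\Psi_{W_n}^2$ places its class in the deepest Hodge piece, and a residue computation on a transverse slice at a generic smooth point of $X_n$ then confirms that this class is nonzero.

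The main obstacle is the rank stratification of the linear map $\alpha \mapsto M(\alpha) v$ over $\PP^{n-1}$. The degeneration loci are cut out by minors of $M$ specialised via $v$, and keeping the weight bookkeeping under control --- to arrive at the precise twist $\QQ(3-2n)$ rather than some other Tate twist --- requires exploiting the dihedral symmetry of the wheel. The cleanest route is probably induction on $n$, in which the boundary strata (and the exceptional locus of $p_1$) are themselves controlled by smaller wheels or by graphs obtained from $W_n$ by edge deletion/contraction.
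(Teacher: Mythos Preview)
This theorem is quoted from \cite{BEK}; the present paper does not give its own proof, and in fact the denominator-reduction machinery developed here yields only the weaker statement that $\gr^W_{4n-6} H^{2n-1}(\PP^{2n-1}\setminus X_n)\cong\QQ(3-2n)$, together with the framing assertion for $[\omega_G]$ (wheels being denominator reducible). Your outline is essentially the original \cite{BEK} strategy --- the determinantal presentation of $\Psi_{W_n}$ and the incidence-variety resolution $\widetilde X_n\subset\PP^{2n-1}\times\PP^{n-1}$ --- and is therefore a genuinely different route from anything in this paper. What this buys is the full purity statement $H^{2n-1}\cong\QQ(3-2n)$, which the paper's method does not see; what it costs is that the argument is specific to wheels and does not extend to the general denominator-reducible class.

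There are, however, two real gaps. First, ``each stratum is a determinantal locus with simple equations, hence mixed Tate'' is exactly the point where \cite{BEK} invests most of its effort: the rank-drop loci of $v\mapsto M(\cdot)v$ are controlled by the tridiagonal-plus-corner shape, and the induction that makes the weight bookkeeping come out to a single copy of $\QQ(3-2n)$ is delicate. You acknowledge this, but it is not a technicality --- it is the proof. Second, your non-vanishing argument for $[\omega_G]$ does not work as stated. A residue on a transverse slice at a smooth point of $X_n$ only detects that $\omega_G$ genuinely has a double pole there, i.e.\ that the form is not regular; it says nothing about the vanishing of its global de Rham class. (Locally near a smooth point the complement retracts onto a circle, so there is no local $(2n-1)$-cohomology to pair against.) Once you know $H^{2n-1}$ is one-dimensional you may instead invoke the non-vanishing of the period $I_{W_n}$, or follow the paper's reduction of differential forms (\S\ref{sectRedForms}), which tracks $[\omega_G]$ through a chain of residue maps down to $[\Omega_1/D_{N-2}]=[(x\,dy-y\,dx)/xy]$ on $\PP^1\setminus\{0,\infty\}$, where non-vanishing is immediate.
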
 

The proof is an elaborate and ingenious argument which was generalised to the case of certain zig-zag graphs by the second author in his thesis \cite{Do}.

In this paper, we prove similar results for some infinite families of graphs by a rather different method. More precisely, 
for any  connected  graph $G$ which is called  denominator-reducible  (to be defined below), we show that 
$$\gr^W_{max} H^{N_G-1} ( \PP^{N_G-1} \backslash X_G)\cong \QQ(3-N_G)$$
and  indeed $\gr^W_{max} H_{dR}^{N_G-1} ( \PP^{N_G-1} \backslash X_G)\cong \QQ [\omega_G]$. 
It was shown in \cite{Br} that the class of denominator-reducible graphs contains the  wheel and  zig-zag families and all other graphs $G$ whose period $I_G$ is known.  The smallest non denominator-reducible graphs were studied in \cite{Br, BrSch} and have $h_G =8, N_G=16$. For one such  graph we prove that its de Rham framing is not of Tate type. This proves that the period $I_G$ cannot  factorize through a category of mixed
Tate motives.

A  corollary of our results is that if the maximum weight part of the graph cohomology complement is non-Tate, then it  must lie in  weight  lower than the generic weight $6-2N_G$. This suggests a remote 
possibility that the top generic  weight part of certain quantum field theories could still be mixed-Tate.

\subsection{Reduction of denominators and framings} The denominator reduction associated to an ordering on the edges of $G$ is a sequence of hypersurfaces
$$\cV(D_0) \subset \PP^{N_G-1}, \ldots,  \cV(D_k) \subset \PP^{N_G-k-1}$$
 defined as follows. The polynomial $D_0$ is by definition  $\Psi^2_G$, which is the denominator of $\omega_G$ defined in $(\ref{introomegaGdef})$. Let $\alpha_m$ denote the 
 variable corresponding to the $m^{\mathrm{th}}$ edge of $G$.  Suppose that $D_0, \ldots,  D_{m-1}$ are defined and non-zero.
 \begin{itemize}
\item \emph{(Generic step)} If  the  $(m-1)^{\mathrm{th}}$ denominator $D_{m-1}$ factorizes as a product
$$D_{m-1} = (f^m \alpha_m +f_m)(g^m \alpha_m + g_m)\ ,$$  where $f^m,f_m, g^m,g_m$ are polynomials which do not depend on $\alpha_m$,  and such that 
  $f^m g_m \neq g^m f_m$, then define
$$D_{m} = \pm (f^m g_m - g^m f_m) \ .$$ 
 \item  \emph{(Weight drop)} If  the  $(m-1)^{\mathrm{th}}$ denominator $D_{m-1}$ is a square
 $$D_{m-1} = (f^m \alpha_m +f_m)^2 \ ,$$  where $f^m,f_m,$ are polynomials which do not depend on $\alpha_m$,  define
$$D_{m} = \pm f^m f_m\ .$$
\end{itemize}
In all other cases, $D_m$ is not defined. One can show \cite{Br} that  the first five denominators 
$D_0,\ldots, D_5$ are always defined, and that a weight drop necessarily occurs at $m=1$ and $m=3$  (which explains  the generic weight of $6-2N_G$.)

If a  supplementary weight drop occurs after this point (for some $m\geq 4$), or if $D_m$ vanishes for some $m$, then $G$ is said to have \emph{weight drop}. 
We shall say that  a graph $G$ is \emph{denominator reducible} if there exists an ordering on its edges for which $D_m$ can be defined for all $m=0,\ldots, N_G-1$, i.e., every edge variable can be eliminated
by the simple procedure above. Clearly, the denominator reduction can be substantially generalized but this is not necessary for the present problem.

\begin{theorem} \label{introthmmain} Suppose that $G$  is connected, and satisfies  $N_G= 2h_G$, where $N_G \geq 5$.   Then for all $k\geq 3$ for which $D_k$ is defined, we have
$$ \gr^W_{2N_G-6} H^{N_G-1}  ( \PP^{N_G-1} \backslash X_G ) \cong \big(\gr^W_{2N_G-2k-2}  H^{N_G-k-1}  ( \PP^{N_G-k-1} \backslash D_k) \big)(2-k)\ .  $$
All higher weight-graded pieces of  $ H^{N_G-1}  ( \PP^{N_G-1} \backslash X_G )$ are zero.
\end{theorem}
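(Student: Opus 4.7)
The plan is to proceed by induction on $k$, analyzing a single denominator-reduction step $U_{m-1}$ to $U_m$ at a time via a Leray spectral sequence for the projection forgetting the variable $\alpha_m$. Write $N = N_G$ and set $U_m = \PP^{N-m-1}\setminus \cV(D_m)$ for each $m$ at which $D_m$ is defined, so $U_0 = \PP^{N-1}\setminus X_G$. The key geometric ingredient is the linear projection $\pi_m \colon \PP^{N-m} \dashrightarrow \PP^{N-m-1}$ forgetting $\alpha_m$; away from its centre, $\pi_m$ is a $\PP^1$-bundle, and the denominator reduction is designed precisely so that the restriction $\pi_m|_{U_{m-1}}$ has generic fibre $\GG_m$ in a generic step and $\AAA^1$ in a weight-drop step.

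The main technical step establishes, for each $m$, a comparison isomorphism between the appropriate weight-graded pieces of $H^{N-m}(U_{m-1})$ and $H^{N-m-1}(U_m)$. In a generic step, where $D_{m-1}=(f^m\alpha_m+f_m)(g^m\alpha_m+g_m)$ factors into two distinct linear forms, the generic fibre $\GG_m$ contributes $R^1(\pi_m)_*\underline\QQ \cong \underline\QQ(-1)$ on the open locus; combining the Leray spectral sequence with an excision long exact sequence for the discriminant $\cV(D_m)$ and the centre of projection, and invoking $E_2$-degeneration of Leray on the top weight-graded quotient (a Deligne-type weight argument), yields
\[
\gr^W_{2N-2m} H^{N-m}(U_{m-1}) \;\cong\; \gr^W_{2N-2m-2} H^{N-m-1}(U_m)(-1).
\]
In a weight-drop step, where $D_{m-1}=(f^m\alpha_m+f_m)^2$, the generic fibre is $\AAA^1$ and no $\QQ(-1)$ appears in $R^1(\pi_m)_*\underline\QQ$. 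A more intricate excision (accounting for the $\PP^1$-jump over $\cV(f^m)\setminus\cV(f_m)$) then shows that $\gr^W_W H^{N-m}(U_{m-1})$ vanishes for $W>2(N-m-1)$, so the top weight drops by $2$ relative to the generic case, consistently with the paper's terminology.

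Assembling the induction: $H^{N-1}(U_0)$ has \emph{a priori} top weight $\leq 2N-2$, and the two mandatory weight drops at $m=1,3$ each reduce the effective top weight by $2$, yielding the vanishing $\gr^W_W H^{N-1}(U_0)=0$ for $W>2N-6$, which is the second assertion of the theorem. The intermediate generic step at $m=2$, together with each further generic step at $m=4,\ldots,k$, contributes one Tate twist $(-1)$; composing the step-by-step isomorphisms from $m=1$ through $m=k$ yields the displayed identification with total twist $(2-k)$, since the number of generic steps among $m=1,\ldots,k$ is $k-2$.

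The main technical obstacles I anticipate are (i) $E_2$-degeneration of the Leray spectral sequence on the top weight-graded quotient, which should follow from a weight argument \`a la Deligne; (ii) uniform control of the three subsidiary loci that could in principle contaminate the top weight --- the discriminant $\cV(D_m)$, the centre of projection $[1\!:\!0\!:\!\cdots\!:\!0]$, and any components of $\cV(D_{m-1})$ that collapse to a full $\pi_m$-fibre --- all of which must lie in strictly smaller weight by codimension bounds; and (iii) the weight-drop case itself, where the fibre structure of $\pi_m|_{U_{m-1}}$ is more intricate (the $\AAA^1$ fibre jumping to $\PP^1$ over part of $\cV(D_m)$), requiring a more refined Leray/excision analysis than the generic case.
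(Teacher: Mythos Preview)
Your proposal has the right one-variable-at-a-time flavor, but there are genuine gaps and at least one concrete error that prevent it from going through as stated.

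First, the fibre description is off. The projection $\pi_m:\PP^{N-m}\setminus\{\text{centre}\}\to\PP^{N-m-1}$ is an $\AAA^1$-bundle, not a $\PP^1$-bundle, and the centre always lies in $\cV(D_{m-1})$ (the coefficients $f^m,f_m,\ldots$ are homogeneous of positive degree in the remaining variables). In a generic step the two linear factors of $D_{m-1}$ cut out two \emph{labelled} points of the $\AAA^1$ fibre, so the generic fibre of $\pi_m|_{U_{m-1}}$ is $\AAA^1\setminus\{\text{2 pts}\}$ and $R^1(\pi_m)_*\QQ$ has rank two, not one; in a weight-drop step the single (double) root gives generic fibre $\GG_m$, not $\AAA^1$. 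Your identification $R^1(\pi_m)_*\underline\QQ\cong\underline\QQ(-1)$ is therefore incorrect in both cases, and the displayed one-step isomorphism does not follow from Leray as you describe it.

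Second, and more fundamentally, the phrase ``codimension bounds'' in your obstacle (ii) is not a substitute for the real input, which is the cohomological Chevalley--Warning theorem of Bloch--Esnault--Levine. The loci you need to discard (e.g.\ the cone over $\cV(f^m,g^m)$, or $\cV(f^m)$ itself) are singular projective or quasi-projective varieties, and for such $Z$ the weight-zero piece $\gr^W_0 H^n_{prim}(Z)$ need \emph{not} vanish by dimension considerations alone. What makes it vanish here is the degree inequality $\deg(f^m g^m)\leq(N-m)-2$, which places these hypersurfaces in the Fano range; the paper invokes precisely this to kill the obstruction terms in its localisation sequences. Without Chevalley--Warning your excision/Leray argument cannot isolate the top weight.

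Finally, the paper does not handle the first three reductions by the same uniform mechanism: it instead uses the explicit structure \eqref{Psistar} of $\Psi_G$ at a $3$-valent vertex together with the Dodgson identity to pass directly to $\cV(\Psi^{13,23}_G,\Psi^{1,2}_{G,3})$ (Proposition~\ref{p3}), and separately establishes the key vanishing $\gr(i)H^n_{prim}(\cV(\Psi^e_G,\Psi_{G,e}))=0$ by an induction on edges using Dodgson again. Your proposal treats $m=1,2,3$ uniformly, but your own obstacle (iii) is exactly this weight-drop analysis, and you give no mechanism for it beyond ``a more refined Leray/excision analysis''; the paper's substitute for this is graph-specific combinatorics that you have not invoked.
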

The point of this theorem is that quite different graphs may have identical denominator reductions $D_k$ for some $k\geq k_0$.
A version of this, and the previous theorems, also  hold for the Hodge filtration.   The proof  uses  a cohomological Chevalley-Warning theorem due to Bloch, Esnault and Levine. 

\begin{corollary} Let $G$ be as in theorem \ref{introthmmain}. If $G$ is denominator-reducible then
$$ \gr^W_{2N_G-6} H^{N_G-1}  ( \PP^{N_G-1} \backslash X_G ) \cong \QQ(3-N_G) \ . $$
Furthermore,  $ \gr^W_{2N_G-6} H_{dR}^{N_G-1}  ( \PP^{N_G-1} \backslash X_G )$ is spanned by $[\omega_G]$.
\end{corollary}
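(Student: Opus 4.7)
The plan is to apply Theorem~\ref{introthmmain} at the largest admissible index $k=N_G-1$. Denominator reducibility is exactly the hypothesis needed to guarantee that $D_{N_G-1}$ exists, and the assumption $N_G\geq 5$ ensures $k\geq 3$. With this choice, the right-hand side of Theorem~\ref{introthmmain} becomes $\gr^W_0 H^0(\PP^0\setminus \cV(D_{N_G-1}))(3-N_G)$, so the first assertion reduces to identifying this zero-dimensional piece.

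The key computation is to observe that the denominator reduction preserves homogeneity: $D_0=\Psi_G^2$ has degree $2h_G$, and each reduction step (generic or weight-drop) lowers the total degree by exactly one while eliminating one variable. Consequently $D_{N_G-1}$ is homogeneous of degree $2h_G-(N_G-1)=1$ in a single variable (using $N_G=2h_G$), hence of the form $D_{N_G-1}=c\,\alpha$ with $c\ne 0$ by the very definition of reducibility. Its vanishing locus in $\PP^0$ is empty, so $\PP^0\setminus\cV(D_{N_G-1})$ is a point, whose $H^0$ is the one-dimensional Hodge structure $\QQ(0)$. Substituting gives
$$\gr^W_{2N_G-6} H^{N_G-1}(\PP^{N_G-1}\setminus X_G)\cong \QQ(3-N_G),$$
proving the first assertion.

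For the de Rham statement, I would track the class $[\omega_G]$ through the isomorphism of Theorem~\ref{introthmmain}. Since that isomorphism should be built inductively from the very same reduction steps --- at each stage either a Leray residue tied to the factorisation $D_{m-1}=(f^m\alpha_m+f_m)(g^m\alpha_m+g_m)$, or the analogous residue at a weight-drop step --- its effect on $[\Omega_{N_G}/\Psi_G^2]$ is to replace, at each step, the ambient volume form by one of lower dimension and the denominator by $D_m$, up to a nonzero rational scalar. Iterating down to $k=N_G-1$ sends $[\omega_G]$ to a nonzero multiple of $[1]\in H^0_{dR}(\mathrm{pt})$, which generates $\gr^W_0$, yielding the second claim.

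The main obstacle is precisely this last step: confirming that the comparison isomorphism of Theorem~\ref{introthmmain}, built from the cohomological Chevalley--Warning input of Bloch--Esnault--Levine, transports $[\omega_G]$ to the canonical generator and that no intermediate scalar degenerates. This is a bookkeeping exercise within the residue calculus underlying the theorem rather than a genuinely new geometric input; everything else in the argument is an immediate consequence of Theorem~\ref{introthmmain} together with the degree count $\deg D_{N_G-1}=1$.
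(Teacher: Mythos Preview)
Your treatment of the first assertion is essentially the paper's argument, with the cosmetic difference that you push to $k=N_G-1$ whereas the paper stops at $k=N_G-2$ (Corollary~\ref{corwd}); both are direct specialisations of Theorem~\ref{introthmmain}. One small imprecision: denominator reducibility only guarantees that $D_{N_G-1}$ is \emph{defined}, not that it is nonzero (the recursive definition requires $D_0,\ldots,D_{N_G-2}$ to be nonzero, but places no constraint on $D_{N_G-1}$). The paper's detailed statement in \S\ref{sectRedForms} accordingly adds the hypothesis ``and non-weight drop'', which is what actually forces $D_{N_G-1}\neq 0$.

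The second assertion is where your sketch has a genuine gap. You propose to track $[\omega_G]$ through the isomorphism of Theorem~\ref{introthmmain} and call this ``a bookkeeping exercise\ldots\ rather than a genuinely new geometric input''. This is not how the paper proceeds, and it understates what is required. The isomorphism of Theorem~\ref{introthmmain} is built on the hypersurface side via localisation and cone arguments on $H^\bullet_{prim}(X_G)$; it is not presented as a chain of residue maps on the complement. The paper instead develops a separate machinery in \S\ref{sectRedForms}: it proves smoothness statements for the relevant strata (Proposition~\ref{propsmooth}, Corollaries~\ref{corsmooth}--\ref{corsmooth2}), which \emph{are} new geometric input specific to graph polynomials via the Dodgson identity; it then establishes form-reduction propositions (Propositions~\ref{propgeneric}, \ref{propsecondDRstep}, \ref{propdegen}). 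Note in particular that the second reduction step cannot be handled by the generic argument --- the paper flags this explicitly before Proposition~\ref{propsecondDRstep} --- and requires a special argument using the three-valent vertex structure. Finally, the logic is not ``the isomorphism carries $[\omega_G]$ to a generator'' but rather the contrapositive: if $[\omega_G]$ vanished in the top graded piece, then the reduced form would vanish at the bottom; one checks directly that $\Omega_1/D_{N-2}\sim (x\,dy-y\,dx)/(xy)$ is nonzero in $\gr^1_F H^1_{dR}(\PP^1\setminus\{0,\infty\})$, and combines this with the one-dimensionality from the first assertion. Your outline has the right shape, but the smoothness results and the special second step are substantive, not bookkeeping.
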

Thus for denominator-reducible graphs, the Feynman differential form provides a Tate framing for the maximal weight piece of the de Rham cohomology.
Keeping track of the framings requires a  different argument from the proof of theorem \ref{introthmmain}.
\begin{corollary} Let $G$ be as in theorem  \ref{introthmmain}.  If $G$ has weight-drop then
$$H^{N_G-1}  ( \PP^{N_G-1} \backslash X_G ) \hbox{  has weights }   < 2N_G-6\ .$$
\end{corollary}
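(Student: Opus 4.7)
The plan is to apply Theorem \ref{introthmmain} at the index just before the weight drop, and then to exploit the geometric degeneration accompanying the drop in order to kill the surviving top-weight graded piece.

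Let $k \geq 4$ denote the smallest index at which the weight drop occurs. By hypothesis, viewing $D_{k-1}$ as a polynomial of degree two in $\alpha_k$ over $\QQ[\alpha_{k+1}, \ldots, \alpha_{N_G}]$, its two roots coincide set-theoretically: either $D_{k-1}$ is a perfect square, or its two linear factors in $\alpha_k$ are proportional (which is precisely the condition $f^k g_k = g^k f_k$ that makes the generic-step $D_k$ vanish). Since $D_{k-1}$ is defined and $k-1 \geq 3$, Theorem \ref{introthmmain} supplies the isomorphism
$$
\gr^W_{2N_G - 6} H^{N_G - 1}(\PP^{N_G - 1} \backslash X_G) \cong \bigl(\gr^W_{2N_G - 2k} H^{N_G - k}(\PP^{N_G - k} \backslash \cV(D_{k-1}))\bigr)(3 - k).
$$
Because $2N_G - 2k$ is the maximum weight that can appear in $H^{N_G-k}$ of a variety of dimension $N_G-k$, the corollary reduces to showing
$$
\gr^W_{2N_G - 2k} H^{N_G - k}\bigl(\PP^{N_G - k} \backslash \cV(D_{k-1})\bigr) = 0.
$$

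To prove this vanishing, I would project onto $\PP^{N_G - k - 1}$ by forgetting $\alpha_k$ and mimic the generic reduction step used in the proof of Theorem \ref{introthmmain}. In the non-degenerate case the generic fiber of the complement is $\PP^1$ minus two distinct points, i.e.\ $\GG_m$, and its $H^1 \cong \QQ(-1)$ provides precisely the Tate twist that produces the expected top weight on the total space. In the weight-drop situation the two points coincide and the generic fiber becomes $\PP^1$ minus one point, i.e.\ $\AAA^1$, whose positive-degree cohomology is zero. With this $\QQ(-1)$ contribution removed, the top weight graded piece must vanish.

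The hard part will be the weight bookkeeping near the locus where the leading coefficient $f^k$ vanishes and the fiber degenerates further. This is the same technical difficulty present in the proof of Theorem \ref{introthmmain} and should be controlled by the same tool, namely the cohomological Chevalley--Warning theorem of Bloch, Esnault and Levine, which bounds weights in complements of hypersurfaces of low degree. A careful application should rule out any surviving top-weight contribution from those degenerate strata.
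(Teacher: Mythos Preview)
Your reduction via Theorem~\ref{introthmmain} at index $k-1$ to the vanishing of the top weight of $H^{N_G-k}(\PP^{N_G-k}\backslash\cV(D_{k-1}))$ is correct, but the fibration argument you propose for that vanishing contains a genuine error and overlooks the one-line proof the paper has in mind.

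The error is in the fibre description. The projection $\PP^{N_G-k}\dashrightarrow\PP^{N_G-k-1}$ forgetting $\alpha_k$ has fibres $\AAA^1$, not $\PP^1$ (the centre of projection lies on $\cV(D_{k-1})$, hence is already removed). When $D_{k-1}=h^2$ with $h=f^k\alpha_k+f_k$, the generic fibre of the complement is therefore $\AAA^1$ minus a single point, i.e.\ $\GG_m$, \emph{not} $\AAA^1$; its $H^1\cong\QQ(-1)$ does not vanish. So the mechanism you invoke (``the $\QQ(-1)$ contribution is removed'') is simply not there, and the sketch as written does not establish the vanishing. One could repair the argument by instead applying Chevalley--Warning to the base $\cV(f^k)$ and then stratifying further, but that is more work, not less, and still hinges on the same degree bound.

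The point you are missing is that the weight drop is a drop in \emph{degree}: since $D_{k-1}$ is a perfect square, the scheme $\cV(D_{k-1})$ equals $\cV(h)$ set-theoretically, and $\deg h=\tfrac12\deg D_{k-1}=\tfrac12(N_G-k+1)\le N_G-k$. Thus $\cV(D_{k-1})\subset\PP^{N_G-k}$ already satisfies the hypothesis of the cohomological Chevalley--Warning theorem (Theorem~\ref{t20}), giving $\gr(0)H^n_{prim}(\cV(D_{k-1}))=0$ for every $n$ in one stroke; localisation and duality then give the desired top-weight vanishing. The case where some $D_m$ is identically zero is even more trivial. This is exactly why Corollary~\ref{corwd} records the weight-drop case as following ``immediately'' from the theorem: no fibration analysis and no bookkeeping near $\cV(f^k)$ are needed.
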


Various combinatorial criteria for a graph $G$ to have weight drop were established in \cite{BY} and \cite{BSY}. Combining these  criteria with  theorem \ref{introthmmain} proves upper bounds on the Hodge-theoretic weights of the 
period $(\ref{introIG})$.

\subsection{Non-Tate counterexamples} 
The  smallest graphs in $\phi^4$ theory ($G$ is said to be in $\phi^4$ theory if all its  vertices have degree  at most 4) which are not denominator-reducible  occur at 8 loops.  One of them was studied  in \cite{BrSch}.

\begin{theorem} Let $G_8$ be the 8-loop modular graph of \cite{BrSch}.  Then 
$$\gr^{13,11} H_{dR}^{15}(\PP^{15} \backslash X_{G_8}) \hbox{ is 1-dimensional, spanned by the class of }  [\omega_{G_8}]\ .$$  
\end{theorem}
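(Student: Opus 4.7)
The plan is to apply the denominator reduction to $G_8$ for as many steps as possible, to upgrade the isomorphism of theorem \ref{introthmmain} so that it respects the Hodge bigrading and tracks the class $[\omega_{G_8}]$, and finally to exploit the modular nature of the terminal polynomial $D_k$ computed in \cite{BrSch}. Let $k$ be the largest index for which $D_k$ is defined for a suitable edge ordering of $G_8$; since $G_8$ is not denominator reducible, $k<N_{G_8}$. The polynomial $D_k$ is precisely the one whose $\FF_q$-point count is, modulo polynomials in $q$, the Fourier coefficient $a_q(f)$ of the weight-three cusp form $f$ identified in \cite{BrSch}.

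The residues used in the proof of theorem \ref{introthmmain} are morphisms of mixed Hodge structures, and at each step the canonical form $\Omega_m/D_{m-1}$ is sent up to sign to $\Omega_{m+1}/D_m$. Iterating these residues through all $k$ reductions, and extending theorem \ref{introthmmain} to a Hodge-bigraded statement valid in a range of bidegrees including $(13,11)$, should produce an isomorphism
\[
\gr^{p,q}H^{15}_{dR}(\PP^{15}\backslash X_{G_8}) \cong \gr^{p-(k-2),q-(k-2)} H^{15-k}_{dR}(\PP^{15-k}\backslash \cV(D_k))
\]
under which $[\omega_{G_8}]$ corresponds to $[\Omega_{16-k}/D_k]$. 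Taking $k$ so that $(13-(k-2),11-(k-2))=(2,0)$, i.e.\ $k=13$, transports the problem into a question about a single Hodge piece of the complement of $\cV(D_{13})\subset\PP^{2}$.

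By the computation of \cite{BrSch} combined with Deligne's construction of the motive attached to the weight-three newform $f$, the cohomology $H^{2}_{dR}(\PP^{2}\backslash\cV(D_{13}))$ contains a rank-two pure sub-Hodge structure of bidegree $(2,0)+(0,2)$, and this is the only contribution to the $(2,0)$-line. This yields the upper bound $\dim \gr^{13,11}H^{15}_{dR}(\PP^{15}\backslash X_{G_8})\leq 1$, and the image of $[\omega_{G_8}]$ necessarily lies in this one-dimensional line.

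The main obstacle is to verify that $[\Omega_{3}/D_{13}]$, the image of $[\omega_{G_8}]$, is non-zero in the $(2,0)$-line. The natural strategy is to combine the arithmetic and Hodge-theoretic information: if $[\Omega_{3}/D_{13}]$ projected to zero in the modular summand, then the corresponding contribution of the differential form to the point count of $\cV(D_{13})$ would disappear, contradicting the appearance of $a_q(f)$ in the point count of \cite{BrSch}. Equivalently, one can pair $[\Omega_{3}/D_{13}]$ with a holomorphic two-form representing a period of $f$ on a smooth model of $\cV(D_{13})$ and verify directly that this pairing does not vanish. The delicate point in both approaches is the compatibility of the iterated residues in the first two paragraphs with the Frobenius action or the period pairing, which is the technical heart of the argument.
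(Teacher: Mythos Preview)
Your overall strategy---reduce via theorem~\ref{introthmmain} in its Hodge-bigraded form, then analyse the terminal hypersurface---matches the paper, but there are two genuine gaps.

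First, the denominator reduction for $G_8$ stops at $k=11$, not $k=13$: the paper states explicitly that $D_0,\ldots,D_{11}$ are defined and gives the formula for $D_{11}$ in $\PP^4$. Your choice $k=13$ is not available. The passage from $\cV(D_{11})\subset\PP^4$ to a surface whose Hodge structure one can compute is \emph{not} a further denominator reduction in the sense of \S1.1; it is a sequence of ad hoc moves (restricting to $\alpha_{16}\neq 0$, rescaling two variables by $\Psi_\gamma$, a further change $\alpha_{13}\mapsto\alpha_{13}/\alpha_{15}+1$, then compactifying) which eventually exhibit a singular quartic $\cV(T)\subset\PP^3$ whose desingularisation $Y$ is a K3 surface. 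Each of these steps must be accompanied by a localisation or Gysin argument showing that the relevant $\gr^{p,q}$ is unchanged and that the differential form is tracked correctly. Without this bridge from $D_{11}$ to the K3, the argument does not reach a variety whose Hodge numbers are known.

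Second, your proposed non-vanishing argument is not rigorous as stated. Point counts over $\FF_q$ constrain the Frobenius eigenvalues on $\ell$-adic cohomology, but they do not by themselves show that a specific de Rham class projects non-trivially to a given Hodge piece; the compatibility you allude to in the last sentence is exactly the hard part and you have not indicated how to establish it. The paper avoids this entirely: it tracks the form $\Omega_{N-1}/\Psi_G^2$ through every reduction step using the explicit residue computations of \S\ref{sectRedForms}, and at the end observes that its image under the residue map $H^3_{dR}(\PP^3\backslash Y)\to H^2_{dR}(Y)$ lands in $H^0(Y,\Omega^2)$, which is one-dimensional for a K3 and visibly non-zero. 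This is a direct Hodge-theoretic computation, not an arithmetic one.
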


To put this result in  context,  it was proved in \cite{BrSch}  that the point-counting function of the corresponding  graph hypersurface over a finite field $\FF_q$ with $q= p^n$ elements satisfies
$$| X_{G_8} (\FF_q) | \equiv -a_q q^2 \pmod{pq^2}$$
where the integers $a_q$ are Fourier coefficients of a certain modular form of weight $3$. One can deduce that the map $q\mapsto | X_{G_8} (\FF_q) |$ is  not a polynomial (or quasi-polynomial) function of $q$, and show  that the Euler characteristic of $H_c(X_{G_8})$ is not mixed-Tate.  
We do not wish to repeat a lengthy history of the point-counting problem  for graph hypersurfaces here: the interested reader can refer to  the summaries in \cite{MaF}, \cite{BrSch}, \cite{AM} or the papers \cite{BB}, \cite{Stem} 
for further information.

However, the possibility remained that the Feynman period $I_{G_8}$ could be supported on a smaller part  of the  cohomology  (or graph motive) which is in fact mixed-Tate. The previous theorem rules out this possibility: 
the non-Tate contribution to the cohomology arises precisely because of the Feynman differential form.
As a result, this disproves a folklore conjecture (mentioned, for example, in  \cite{MaE} \S1.6) that the periods of Feynman graphs factor through a category of mixed Tate motives.

The reader may have noticed that the above theorems pertain to the absolute cohomology of the graph hypersurface complement, and not the full graph motive $H$. We expect that the methods of the present paper, together
with some standard spectral sequence arguments for relative cohomology will be enough to deduce corresponding statements for the full graph motive $H$.
\\

\emph{Acknowledgements}.  The first named author is partially supported by ERC grant 257638, and wishes to thank Humboldt university for hospitality and support.
The second named author was supported by the same grant in 2011 and is currently supported by Humboldt university.

\section{Preliminaries}

We first gather some preliminary results on graph polynomials and some basic identities for them. Secondly, for the benefit of physicists, we review
some well-known exact sequences for cohomology. 

\subsection{Polynomials related to graphs} The graph polynomial $\Psi_G$ can be written as a determinant as follows.
Let $G$ be a connected graph without tadpoles (self-loops), and let $E$ denote its set of edges, and $V$ its set of vertices.
Choose an orientation of its edges. For
an edge $e$ and vertex $v$ set $\varepsilon_{e,v}$ to be 1 if $v$ is the
source of $e$, -1 if $v$ is the target, and 0 otherwise, and let  $\cE_G$ be the
$|E|\times (|V|-1)$ matrix obtained by deleting one of the columns of
$(\varepsilon)_{e,v}$. 

Consider the $(|E|+|V|-1)\times (|E|+|V|-1)$ matrix $M_G$
$$
M_G=\left( \begin{array}{c|c}
A & \cE_G\\
\hline
-\cE_G^T & 0
\end{array} \right),
$$
where $A$ is the diagonal matrix with entries $\alpha_e$, $e\in E$. Write $N:=|E|$.
One can show by the matrix-tree theorem that the graph polynomial  $(\ref{intro: PsiGdefinition})$   associated to the graph $G$ is simply given by  the determinant of $M_G$
\begin{equation}
\Psi_G=  \det(M_G)Ê\ .
\end{equation}
In order to  understand the structure of graph hypersurfaces we require various  identities involving  some  other polynomials based on the matrix $M_G$.
\begin{definition}
Let $I,J,K$ be subsets of $E$ which satisfy $|I|=|J|$. Let $M_G(I,J)_K$ denote
the matrix obtained from $M_G$ by removing the rows (resp. columns) indexed by
the set $I$(resp. $J$) and setting $\alpha_e=0$ for all $e\in K$. Define the
Dodgson polynomial to be the corresponding minor
\begin{equation}
\Psi^{I,J}_{G,K}:=\det M_G(I,J)_K.
\end{equation}
Strictly speaking, the polynomials $\Psi^{I,J}_{G,K}$ are defined up to a sign which depends on the choices involved in defining $M_G$. A simple-minded way to fix the signs is to fix
a matrix $M_G$ once and for all for any given graph $G$.
\end{definition}
The Dodgson polynomials satisfy many identities, which can be found
in \cite{Br}. We only recall two of them here.

\begin{enumerate}
\item The contraction-deletion formula. For any $\alpha_e$, $e\in E$
\begin{equation}
\Psi^{I,J}_{G,K}=\Psi^{Ie,Je}_{G,K}\alpha_e+\Psi^{I,J}_{G,Ke}
\end{equation}
where  $\Psi^{Ie,Je}_{G,K} = \pm   \Psi^{I,J}_{G\backslash e,K}$ and  $\Psi^{I,J}_{G,Ke}=\pm \Psi^{I,J}_{G\q e,K}$. Here, $G\backslash e$ (respectively, $G\q e $) denotes the graph obtained by
deleting (contracting) the edge $e$.
\item The  Dodgson identity. Let $a,b,x\notin I\cup J\cup K$. Then
\begin{equation}
\Psi^{Ix,Jx}_{G,K} \Psi^{Ia,Jb}_{G,Kx} -  \Psi^{I,J}_{G,Kx} \Psi^{Iax,Jbx}_{G,K} =\Psi^{Ix,Jb}_{G,K}\Psi^{Ia,Jx}_{G,K} \ .
\end{equation}
\end{enumerate}
We will  mostly deal  with  graphs which have a 3-valent vertex (this holds for all non-trivial physical graphs). 
 For such graphs it is convenient to use the following notation (see  \cite{Br}, Example 32).
Suppose that  $G$  has a  3-valent vertex adjoined to   edges $e_1$, $e_2$, and $e_3$. Define
\begin{equation}\label{b9}
f_0:=\Psi_{G\backslash\{1,2\}\q 3},\;\; f_i:=\Psi^{j,k}_{G,i},\;\; f_{123}=\Psi_{G\q\{1,2,3\}} , 
\end{equation}
with $\{i,j,k\}=\{1,2,3\}$. Note also that $f_0 = \Psi^{ij,jk}_G$.  The structure of $\Psi_G$ is
\begin{equation}\label{Psistar}
\Psi_G=f_0(\alpha_1\alpha_2+\alpha_1\alpha_3+\alpha_2\alpha_3)+(f_1+f_2)\alpha_3+(f_2+f_3)\alpha_1+(f_1+f_3)\alpha_2+f_{123},
\end{equation}
where the $f_I$'s are related by the identity
\begin{equation}\label{b11}
f_0f_{123}=f_1f_2+f_1f_3+f_2f_3.
\end{equation}

\subsection{Cohomology and  exact sequences}

Throughout this paper we shall work over a field $k$ of characteristic zero.  Let $X \subset\PP^N$ be a quasi-projective  but not necessarily smooth scheme defined over $k$.

Recall that  the Betti cohomology $H^n(X)=H^n(X; \QQ)$ has a  $\QQ$-mixed Hodge structure  (\cite{De2},   2.3.8). This consists of  a finite increasing filtration 
$W_{\bullet} H^n(X)$ called the weight, and a finite decreasing filtration $F^{\bullet} H^n (X)\otimes\CC$ called the Hodge filtration, such that the induced filtration $F$ on the associated graded pieces $\gr^W_k$
is a pure Hodge  structure of weight $k$.  The category of mixed Hodge structures (MHS) is an abelian category.
We shall frequently use the fact that 
\begin{equation}
H \mapsto \gr^W_k H \qquad \hbox{ (resp. }  H \mapsto \gr^k_F H\otimes \CC \hbox{ )}
\end{equation}
is  an exact functor from the category of mixed Hodge structures to the category of pure Hodge structures over $\QQ$ (respectively, to the category of 
graded complex vector spaces, the grading being given by the weight). We shall often write $\gr(k)$ when we wish to consider both $\gr^k_F$
or $\gr^W_{2k}, \gr^W_{2k+1} $ simultaneously; the extension of scalars should be clear from the context. 
The Hodge numbers $h^{p,q}$ are defined by  $h^{p,q} = \dim_{\CC} \gr_F^p \gr^W_{p+q} H\otimes \CC$. 
They satisfy (\cite{De2} 3.2.15, \cite{De3}  8.2.4),
\begin{eqnarray} \label{Xhodgenos}
&& h^{p,q} (H^n(X) )\neq 0 \quad \Rightarrow \quad 0 \leq p,q  \leq n \ ,  \\
&X \hbox{ smooth} & h^{p,q} (H^n(X) )\neq 0  \quad \Rightarrow  \quad p+q \geq n\  ,  \nonumber \\
&X \hbox{ proper} & h^{p,q} (H^n(X) )\neq 0  \quad \Rightarrow  \quad p+q \leq n\ . \nonumber 
\end{eqnarray}
and are symmetric: $h^{p,q}=h^{q,p}$ for all $p,q$.
There is a unique pure Hodge structure of dimension one in  each even  weight $2n$, which is the pure Tate Hodge structure denoted by $\QQ(-n)$. Its  Hodge numbers satisfy  $h^{p,q}=1$ if $(p,q)=(n,n)$, and are zero
otherwise. More generally, a  mixed Hodge structure $H$ is said to be 
\emph{mixed Tate} if 
\begin{equation}
h^{p,q} \neq 0\quad  \Rightarrow \quad  p = q \ . 
\end{equation}
Equivalently,  $\gr^W_{2k} H \cong \QQ(-k)^{\oplus n_k}$, where $n_k \in \NN$, and $\gr^W_{2k+1} H =0$ for every $k$.
Recall the notation for Tate twists $H(n) = H \otimes \QQ(n)$, which shifts the weight filtration by $-2n$, and the Hodge filtration by $n$.

We will also make use of cohomology with compact support $H^k_c(X)$, which  carries  a mixed Hodge structure, and  
is functorial with respect to proper morphisms. When $X$ is proper,  $H^k_c(X) = H^k(X)$, 
and when $X$ is smooth  of equidimension $n$, there is a canonical isomorphism:
 \begin{equation} 
 H_c^k(X)^{\vee} \cong H^{2n-k}(X) (n)
 \end{equation}
 where the superscript $\vee$ denotes the dual mixed Hodge structure.

For a proper scheme $X\subset\PP^N$ and $r<2N$ we define the primitive cohomology
\begin{equation} H^r_{prim}(X):=\coker (H^r(\PP^{N})\rightarrow
H^r(X)).
\end{equation} 
Finally, we will also consider  de Rham cohomology $H^{n}_{dR}(X; k)$. When $X$ is smooth and affine it is computed by the cohomology of the complex of
regular  forms  $\Omega^{\bullet}(X;k)$.
Typically, $X$ will be defined over $k=\QQ$, and we shall simply write
$H^n_{dR}(X)$ for $H^n_{dR}(X;\QQ)$. The Betti-de Rham comparison gives  an isomorphism
\begin{equation}\label{BettideRham}
H^n(X) \otimes \CC \cong H^n_{dR} (X)\otimes \CC\ .
\end{equation}

\subsubsection{Exact sequences and notations} Our main results concern the computation of the cohomology  of graph hypersurfaces, or their open complements, in middle degree.
The arguments involve applying various standard exact sequences many times over. For the convenience of the reader, we state  them below.

Let $X$ be a proper  scheme of the type considered above,  and let  $Z$ be a closed subscheme. Write  $U=X\backslash Z$. Then there is an exact sequence 
\begin{equation} \label{seq : loc1}
\longrightarrow H_c^r(U;\QQ)\longrightarrow H^r(X;\QQ)\longrightarrow H^r(Z;\QQ)\longrightarrow H^{r+1}_c(U;\QQ) \longrightarrow 
\end{equation}
which is called the \textit{localization sequence}. Since our coefficients are always in $\QQ$, we shall omit them hereafter. The sequence remains valid after replacing cohomology with   primitive cohomology, i.e., after adding  a subscript $prim$ to $ H^r(X)$ and $H^r(Z)$.  With the same hypotheses,  we can also consider cohomology with support $H_{Z}^r(X)$ (\cite{Spa}), which sits inside another localization sequence
\begin{equation} \label{seq : loc2}
\longrightarrow H_{Z}^r(X)\longrightarrow H^r(X)\longrightarrow H^r(U)\longrightarrow H_{Z}^{r+1}(X) \longrightarrow
\end{equation}
In the case when $Z$ is smooth in $X$, one has a \textit{Gysin isomorphism} 
\begin{equation}
H^r_Z(X)\cong H^{r-2}(Z)(-1).
\end{equation} 
Combining the last two gives  the \textit{Gysin sequence} 
\begin{equation} \label{seq :  Gysin}
\longrightarrow H^{r-1}(U)\longrightarrow H^{r-2}(Z)(-1)\longrightarrow H^r(X)\longrightarrow H^r(U)\longrightarrow
\end{equation}
In de Rham cohomology, the map $H_{dR}^r(U) \rightarrow H_{dR}^{r-1}(Z)(-1)$ in the previous sequence is given by  the residue map.
Finally, when $X$ admits a closed  covering $X=X_1\bigcup X_2$, we have the  \emph{Mayer-Vietoris sequence}
\begin{equation} \label{seq :  MV}
\longrightarrow H^r(X)\longrightarrow H^r(X_1)\oplus H^r(X_2)\longrightarrow H^r(X_1\cap X_2)\longrightarrow  H^{r+1}(X) \longrightarrow 
\end{equation}
The above sequences are motivic in the sense that they correspond to distinguished triangles  in 
a triangulated category of mixed motives over $k$ \cite{Voe}. 
 In particular, the sequences are valid in a suitable abelian category of motives when it exists (such as a category of mixed Tate motives
over a number field).

\begin{remark} Artin vanishing states that  if $U$ is smooth and  affine of finite type over $k$, then we have
$H^r(U)=0$ for $r>\dim U$  (\cite{SGA7}, XIV). 
 Since Artin vanishing is not presently known to be motivic, we  preferred to  avoid  using it at all costs, even though  it could have marginally simplified certain arguments.
 \end{remark}

We shall frequently use the following remark (compare [BEK], lemma 11.4).
\begin{lemma} \label{lemcone} Let $\ell\geq m$ and let $V\subset \PP^\ell$, $W\subset \PP^m$ be hypersurfaces such that $V$ is   a cone over $W$.
Then $H_{prim}^n(V) \cong  H^{n-2(\ell-m)}_{prim} ( W) (m-\ell).$
\end{lemma}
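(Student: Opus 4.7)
The plan is to reduce the computation via the linear projection from the vertex of the cone, then compare with the analogous construction for $\PP^{\ell}$ by a snake-lemma argument. After choosing coordinates so that $V = \cV(f) \subset \PP^{\ell}$ has $f \in \QQ[x_0, \ldots, x_m]$ and $L = \cV(x_0, \ldots, x_m) \cong \PP^{\ell-m-1}$ is the vertex, the linear projection from $L$ restricts to a Zariski-locally trivial affine bundle $\pi\colon V \setminus L \to W$ of rank $\ell - m$. The affine-bundle formula for compactly supported cohomology, combined with the properness of $W$, yields the isomorphism of mixed Hodge structures $H^r_c(V \setminus L) \cong H^{r-2(\ell-m)}(W)(m-\ell)$, and analogously $H^r_c(\PP^{\ell} \setminus L) \cong H^{r-2(\ell-m)}(\PP^{m})(m-\ell)$.

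Next I would feed these identifications into the compactly supported localization sequences for $L \subset V$ and $L \subset \PP^{\ell}$, obtaining the comparison diagram
$$
\begin{array}{ccccccc}
\cdots \to & H^{r-2(\ell-m)}(\PP^{m})(m-\ell) & \to & H^r(\PP^{\ell}) & \to & H^r(L) & \to \cdots \\
 & \downarrow & & \downarrow & & \| & \\
\cdots \to & H^{r-2(\ell-m)}(W)(m-\ell) & \to & H^r(V) & \to & H^r(L) & \to \cdots
\end{array}
$$
in which the vertical maps come from the closed immersion $V \hookrightarrow \PP^{\ell}$ (identity on $L$), and the leftmost vertical map becomes, under the affine-bundle identifications, the restriction $H^{*}(\PP^{m}) \to H^{*}(W)$ induced by $W \hookrightarrow \PP^m$. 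Because the composite $H^r(\PP^{\ell}) \to H^r(V) \to H^r(L)$ is simply the restriction of powers of the hyperplane class and therefore surjective, both maps to $H^r(L)$ in the diagram are surjective and each row splits into a short exact sequence ending in $H^r(L)$.

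A first application of the snake lemma, using that the vertical map on $H^r(L)$ is the identity, identifies $H^r_{prim}(V) = \coker(H^r(\PP^{\ell}) \to H^r(V))$ with the cokernel of the induced map $\widetilde{I}_r \to I_r$ between the images of the left horizontal arrows. A second snake lemma, applied to the short exact sequences expressing $I_r$ and $\widetilde{I}_r$ as quotients of $H^r_c(V \setminus L)$ and $H^r_c(\PP^{\ell} \setminus L)$ by the images $K_r, \widetilde{K}_r$ of the connecting maps from $H^{r-1}(L)$, then reduces this to $\coker(H^r_c(\PP^{\ell} \setminus L) \to H^r_c(V \setminus L)) \cong H^{r-2(\ell-m)}_{prim}(W)(m-\ell)$; the key input here is that, by naturality of the connecting map, $\widetilde{K}_r \to K_r$ is surjective, so the relevant snake sequence forces the middle cokernel to agree with the outer one.

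I expect the main obstacle to be precisely this bookkeeping in the second snake lemma: verifying that the connecting map from $H^{r-1}(L)$ into $H^r_c(V \setminus L)$ factors as the analogous map into $H^r_c(\PP^{\ell} \setminus L)$ followed by the pullback $H^r_c(\PP^{\ell} \setminus L) \to H^r_c(V \setminus L)$. This follows from the functoriality of the compactly supported localization sequence with respect to the closed immersion $V \hookrightarrow \PP^{\ell}$ together with the naturality of the affine-bundle isomorphism, but ensuring that all the identifications close up to an isomorphism (rather than merely a surjection) of mixed Hodge structures requires some care.
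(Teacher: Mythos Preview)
Your argument is correct, but the paper's route is considerably shorter. The observation you are missing is that the vertex $L$ of the cone is \emph{contained in} $V$, so the same linear projection $\pi$ is already defined on the hypersurface \emph{complement}: it restricts to an $\AAA^{\ell-m}$-fibration $\PP^{\ell}\backslash V \to \PP^{m}\backslash W$. One then has directly $H^{n}_c(\PP^{\ell}\backslash V)\cong H^{n-2(\ell-m)}_c(\PP^{m}\backslash W)(m-\ell)$, and the single localisation sequence for $V\subset\PP^{\ell}$ (using that $H^r(\PP^{\ell})\to H^r(V)$ is injective on all degrees) identifies $H^{n}_{prim}(V)$ with $H^{n+1}_c(\PP^{\ell}\backslash V)$, and likewise for $W$. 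This bypasses both snake-lemma steps entirely. Your approach instead removes $L$ from $V$ and from $\PP^{\ell}$ separately, which forces you to reassemble the primitive cohomology out of two fibrations and two localisation sequences; it works, and has the mild conceptual advantage that it would adapt to situations where the ``bad'' locus is not contained in $V$, but here it is doing unnecessary work.
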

\begin{proof}   The projection $\PP^{\ell} \backslash V \rightarrow \PP^{m} \backslash W$ is  an $\AAA^{\ell-m}$-fibration. By homotopy invariance, 
$H_c^n(\PP^{\ell} \backslash V) \cong H_c^{n-2(\ell-m)} (\PP^{\ell} \backslash W)(m-\ell).$ A localisation sequence for $V\subset \PP^{\ell}$ gives
$H^n_{prim}(V) \cong H_c^n(\PP^{\ell} \backslash V)$, and similarly for $W$, which gives the statement.
\end{proof}

Throughout the paper  we write $\cV(f_1,\ldots,f_m)$  for the
vanishing locus of homogeneous polynomials $f_1,\ldots,f_m \in k[x_1,\ldots, x_N]$ 
in $\PP^{N-1}$.

\subsection{Chevalley-Warning theorem for cohomology}
The Chevalley-Warning theorem states that  if  $X \subset \PP^N$ is a hypersurface of degree $d\leq N$ defined over a finite field $\FF_q$ with $q$ elements,  then one has
$$| X(\FF_q) |  \equiv | \PP^N(\FF_q)| \mod q\ .$$
The following cohomological version of this theorem was proved by Bloch, Esnault and Levine in \cite{BEL}, 
using  a beautiful geometric  idea due to Roitman.  It will play a crucial role in the rest of this  paper.

\begin{theorem}\label{t20}
Let $X\subset\PP^N$ be a hypersurface of degree $d\leq N$ over a field $k$ of characteristic zero.  Then 
$$gr^0_F
H^{n}(X)=0\  , \hbox{  for all  } n \geq 1 \ . $$
\end{theorem}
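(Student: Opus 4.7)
The plan splits cleanly into a smooth case, handled by direct coherent cohomology, and a singular case, where a Roitman-style decomposition-of-the-diagonal argument is required; the bound $d\leq N$ enters decisively in both.

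In the smooth case, Hodge theory identifies $\gr^0_F H^n(X) \cong H^n(X,\mathcal{O}_X)$, so it suffices to show this coherent cohomology vanishes for $n\geq 1$. Taking cohomology of the ideal sequence
$$0\to\mathcal{O}_{\PP^N}(-d)\to\mathcal{O}_{\PP^N}\to\mathcal{O}_X\to 0$$
and using $H^q(\PP^N,\mathcal{O})=0$ for $1\leq q\leq N-1$ leaves only the vanishing of $H^N(\PP^N,\mathcal{O}(-d))$. By Serre duality this equals $H^0(\PP^N,\mathcal{O}(d-N-1))^{\vee}$, which is zero precisely when $d\leq N$. The remaining degrees $n\geq N$ are automatic from $\dim X = N-1$.

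For the general, possibly singular, case, the obvious first attempt is a cubic hyperresolution $X_\bullet \to X$ (Deligne, Navarro-Aznar), producing a spectral sequence of mixed Hodge structures $E_1^{p,q}=H^q(X_p) \Rightarrow H^{p+q}(X)$. Since $\gr^0_F$ is exact on MHS, controlling $\gr^0_F H^n(X)$ would reduce to the vanishing of $H^q(X_p,\mathcal{O}_{X_p})$ on each smooth component. The obstacle is that the $X_p$ arising from resolving a singular hypersurface are in general not hypersurfaces of low degree in a projective space, so the Koszul calculation above does not transmit through the resolution.

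The remedy, following the Roitman idea used by Bloch-Esnault-Levine, is to bypass resolution altogether. The essential geometric input is that the degree bound $d\leq N$ forces $X$ to be swept out by low-degree rational cycles in a very uniform way: any line $L\subset\PP^N$ meets $X$ in a zero-cycle of degree at most $N$, and such cycles move freely through $X$. I would package this abundance into a decomposition of the diagonal of $X$ in a suitable Chow (or correspondence) group valid on possibly singular schemes, and then invoke a Bloch-Srinivas-type transfer to translate the cycle-theoretic decomposition into the Hodge-theoretic vanishing $\gr^0_F H^n(X)=0$ on $X$ itself. The hard part will be constructing the correct universal correspondence from the hypothesis $d\leq N$ alone, and checking that it acts on $\gr^0_F$ as required in the singular setting; this is where the real content of the BEL theorem lives.
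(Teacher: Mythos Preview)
The paper does not give a proof of this theorem at all: it is quoted as a result of Bloch, Esnault and Levine \cite{BEL}, with the remark that it uses ``a beautiful geometric idea due to Roitman.'' So there is no proof to compare against; your proposal in fact goes further than the paper does.

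Your smooth case is correct and complete: the identification $\gr^0_F H^n(X)\cong H^n(X,\mathcal{O}_X)$ and the Koszul/Serre-duality computation are exactly the standard argument, and the degree bound $d\leq N$ enters precisely where you say.

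For the singular case, your diagnosis is accurate: a naive hyperresolution argument does not transmit the degree bound to the pieces $X_p$, and the correct route is the Roitman-type decomposition-of-the-diagonal argument of \cite{BEL}. However, what you have written is a description of the strategy, not a proof. You stop at the point where the actual work begins (constructing the correspondence and showing it acts as needed on $\gr^0_F$ for singular $X$), and you say so yourself. This is the same level of detail the paper provides --- a pointer to \cite{BEL} --- so in that sense your proposal is aligned with the paper. But if the intent was to supply a self-contained proof, the singular case remains a genuine gap: you have named the method without executing it. One minor inaccuracy: a line not contained in $X$ meets it in a zero-cycle of degree exactly $d$, not ``at most $N$''; the relevant fact is $d\leq N$, which is what makes the Roitman construction go through.
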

This implies that the Hodge numbers of $X$ satisfy $h^{0,q}=0$ for all $q\geq 1$, and by symmetry of the Hodge numbers, we also obtain $h^{p,0}=0$ for all $p\geq 1$. In particular, 
$h^{i,j}=0$ for $i+ j = 1$, and $\gr^W_1 H^n(X)=0$ for all $n\geq 1$. Thus   $$gr^W_i
H^{n}_{prim}(X,\QQ)=0$$ for $i<2$ and all $n$.
Recall that we shall write this $\gr(0) H^{n}_{prim}(X,\QQ)=0$ to denote both statements for the Hodge and weight filtrations simultaneously.

The  result easily extends to complete intersections.
\begin{theorem}\label{t24}
Let $X \subset\PP^N$ be the intersection of $r$ hypersurfaces $X_1,  \ldots, X_r$ where $X_i$ is of degree $d_i$, and $d_1+\ldots+d_r\leq N$. Then
 $$\gr(0) H^{n}_{prim}(X)=0 \hbox{  for all  }  n\ . $$
\end{theorem}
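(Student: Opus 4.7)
The plan is to induct on $r$, with the case $r = 1$ being exactly Theorem \ref{t20}. The geometric starting point for the inductive step is the observation that $X$ lies inside the single ambient hypersurface
\[
V \;:=\; X_1 \cup \cdots \cup X_r \;=\; \cV(F_1 F_2 \cdots F_r) \;\subset\; \PP^N,
\]
which has degree $d_1 + \cdots + d_r \leq N$. Theorem \ref{t20} therefore applies directly to $V$ and yields $\gr(0)\, H^n(V) = 0$ for all $n \geq 1$.

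To propagate this vanishing down from $V$ to $X$, I would use the Mayer--Vietoris spectral sequence for the closed covering $V = \bigcup_i X_i$:
\[
E_1^{p,q} \;=\; \bigoplus_{|I| = p+1} H^q(X_I) \;\Longrightarrow\; H^{p+q}(V), \qquad X_I \;:=\; \bigcap_{i \in I} X_i.
\]
This spectral sequence is obtained by iterating the Mayer--Vietoris long exact sequence (\ref{seq :  MV}) and, by the motivic nature of that sequence, lives in the category of mixed Hodge structures; the exact functor $\gr(0)$ can therefore be applied termwise. For every subset $I$ with $|I| < r$, the scheme $X_I$ is an intersection of strictly fewer than $r$ hypersurfaces whose degrees still sum to at most $N$, so the inductive hypothesis gives $\gr(0)\, H^q_{prim}(X_I) = 0$; since $H^q(\PP^N)$ is pure of weight $q$, the same vanishing extends to the full $\gr(0)\, H^q(X_I)$ whenever $q \geq 1$. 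Consequently, after $\gr(0)$ is applied, every entry of the $E_1$-page in rows $q \geq 1$ vanishes except in the last column $p = r-1$, where the unique surviving term is $\gr(0)\, H^q(X)$.

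This rightmost column cannot map anywhere (there is no column $r$), and any incoming differential on rows $q \geq 1$ would originate in a column $p' < r - 1$ and a row $q' \geq 1$, which we have just shown to be zero. Hence $\gr(0)\, E^{r-1, q}_{\infty} = \gr(0)\, H^q(X)$ for every $q \geq 1$, and this is a subquotient of $\gr(0)\, H^{r-1+q}(V) = 0$. The required vanishing $\gr(0)\, H^q_{prim}(X) = 0$ follows. The step I expect to be most delicate is justifying the compatibility of the Mayer--Vietoris spectral sequence with the mixed Hodge structures for a closed and possibly non-transverse covering, so that $\gr(0)$ may be pushed termwise through it; once this is granted, the remainder of the argument is essentially formal bookkeeping. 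A minor separate point is the $q = 0$ case, which reduces to a connectedness statement on $X$ and is automatic as soon as $\dim X \geq 1$.
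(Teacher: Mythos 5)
Your proof is correct, but it takes a genuinely different route from the paper's. The paper never invokes Theorem \ref{t20} for the union $V=\cV(F_1\cdots F_r)$; instead it strengthens the induction hypothesis — vanishing of $\gr(0)H^n_{prim}$ for any intersection $\bigcap_i X_i$ such that the \emph{irreducible components} of $\bigcup_i X_i$ have degrees summing to at most $N$ — and runs a single Mayer--Vietoris long exact sequence relating $X=Y\cap X_r$, $H^n_{prim}(Y)\oplus H^n_{prim}(X_r)$ and $H^{n+1}_{prim}(Y\cup X_r)$, where $Y=X_1\cap\cdots\cap X_{r-1}$. The whole point of the strengthened hypothesis is the troublesome term $Y\cup X_r$: it is itself an intersection, namely $\bigcap_{i<r}(X_i\cup X_r)$, of hypersurfaces whose individual degrees $d_i+d_r$ may sum to more than $N$, but whose union has the same irreducible components as $\bigcup_i X_i$; this keeps the whole argument inside the short list of long exact sequences set up in the preliminaries. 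Your version trades that combinatorial trick for the Mayer--Vietoris spectral sequence of the closed covering, applying Theorem \ref{t20} to $V$ and the inductive hypothesis only to the honest sub-intersections $X_I$, $|I|<r$; the bookkeeping with incoming and outgoing differentials in the last column is right, and the point you flag as delicate — that this spectral sequence is one of mixed Hodge structures, so the exact functor $\gr(0)$ passes through it page by page — is standard (it is the spectral sequence of the associated simplicial scheme in Deligne's theory). What the paper's route buys is economy of tools and a manifestly ``motivic'' argument; what yours buys is that the degree hypothesis is used only in the two transparent places ($V$ itself and the $X_I$). One small correction: the $q=0$ case is not ``automatic as soon as $\dim X\geq 1$'' — you need connectedness of $X$, which does hold here because $r\leq\sum_i d_i\leq N$ forces either $r\leq N-1$ (connectedness of an intersection of ample divisors) or $r=N$ with all $d_i=1$, i.e.\ a linear subspace; the paper's own induction reduces this case to the connectedness of a single hypersurface.
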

\begin{proof}
We prove a  stronger statement, namely, if $X= \cap_{i=1}^r X_i $ and $Z_1,\ldots, Z_s$ are the set of irreducible components of $\cup_{i=1}^r X_i$ and satisfy  
$\sum_{i=1}^s \deg Z_i \leq N$, then we have 
$\gr(0) H^{n}_{prim}(X)=0$  for all  $n$.
The proof is by induction. For $r=1$ it is the previous theorem. Let $Y= X_1 \cap \ldots \cap X_{r-1}$.  A Mayer-Vietoris  sequence gives
\begin{equation}
\rightarrow H^n_{prim}(Y)\oplus H^n_{prim} (X_r)\rightarrow H^n_{prim}(X)\rightarrow H^{n+1}_{prim}(Y\cup X_r) \rightarrow
\end{equation}
By induction hypothesis,  the vanishing  statement holds for the summands on the  left. It  holds for the  term on the right since 
$Y \cup X_r = (X_1 \cup X_r) \cap \ldots \cap( X_{r-1} \cup X_r)$, and $\cup_{i=1}^{r-1} (X_i \cup  X_r) $ has irreducible components $Z_1,\ldots, Z_s$. The result follows from the exactness of $\gr(0)$.
\end{proof}
We will mostly apply this theorem in the case  $r=2$.

\newpage
\section{Cohomological denominator reduction}

For any homogeneous polynomials  $f=f^1 x + f_1$ and $g=g^1 x+g_1$, where $f^1,f_1,g^1,g_1\in k[x_2,\ldots,x_N]$,  let us denote their resultant by:
\begin{equation}
[f,g]_x=f^1g_1-f_1g^1.
\end{equation}

\subsection{The generic reduction  step}
 
  Let  $f, g$ be  polynomials as above, satisfying $\deg fg \leq  N$.  Suppose that their resultant has a  factorization
 \begin{equation} \label{fgfactorizesasab}
  [f,g]_x= ab
  \end{equation}
 where $a,b$ are polynomials of  degree $\geq 1$.  Then the following  holds.

\begin{proposition}  \label{propDenomRed} (Denominator reduction) With $f,g,a,b$ as  above, 
\begin{equation}\label{DenomRedGenericStep}
\gr(0) H_{prim}^n( \cV(f,g) )  \cong \gr(0) H_{prim}^{n-1}( \cV(a,b) )\ ,
\end{equation}Ê
for all $n$,  where $\cV(f,g) \subset \PP^{N-1}$ and $\cV(a,b) \subset \PP^{N-2}$.
 \end{proposition}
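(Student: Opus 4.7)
The plan is to first convert the proposition into an equivalent statement about the reducible hypersurfaces $\cV(fg)$ and $\cV(ab)$ by combining Mayer--Vietoris with Chevalley--Warning, and then to prove that statement geometrically via the linear projection from $P=[1:0:\ldots:0]\in\PP^{N-1}$.

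Under the hypotheses $\deg f+\deg g\leq N$ and $\deg a,\deg b\geq 1$, each of $\deg f,\deg g$ is at most $N-1$, and since $\deg a+\deg b=\deg[f,g]_x\leq \deg f+\deg g-1\leq N-1$, each of $\deg a,\deg b$ is at most $N-2$. Theorem \ref{t20} then forces $\gr(0)H^r_{prim}$ to vanish on each of the hypersurfaces $\cV(f),\cV(g)\subset\PP^{N-1}$ and $\cV(a),\cV(b)\subset\PP^{N-2}$. Applying the exact functor $\gr(0)$ to the primitive Mayer--Vietoris sequences for the covers $\cV(fg)=\cV(f)\cup\cV(g)$ and $\cV(ab)=\cV(a)\cup\cV(b)$, whose intersections are $\cV(f,g)$ and $\cV(a,b)$ respectively, these vanishings collapse the sequences into the connecting isomorphisms
\begin{equation*}
\gr(0) H^n_{prim}(\cV(f,g)) \;\cong\; \gr(0) H^{n+1}_{prim}(\cV(fg)), \qquad \gr(0) H^{n-1}_{prim}(\cV(a,b)) \;\cong\; \gr(0) H^n_{prim}(\cV(ab)).
\end{equation*}
The proposition is thereby equivalent to the isomorphism $\gr(0) H^{n+1}_{prim}(\cV(fg))\cong \gr(0) H^n_{prim}(\cV(ab))$ between reducible hypersurfaces one ambient dimension apart.

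To establish this reduced statement I would use the linear projection $\pi:\PP^{N-1}\dashrightarrow\PP^{N-2}$ from $P$, realised on the $\PP^1$-bundle $Q=\mathrm{Bl}_P\PP^{N-1}\to\PP^{N-2}$. Since $f$ and $g$ are each linear in $x_1$, the product $fg$ restricts to the fiber of $Q$ over $p\in\PP^{N-2}$ as a degree-$2$ form whose two roots coincide precisely when $[f,g]_{x_1}(p)=a(p)b(p)=0$. The strict transform $\widetilde{\cV(fg)}\subset Q$ is therefore a reducible two-sheeted cover of $\PP^{N-2}$ ramified along the branch locus $\cV(ab)$, its two sheets being $\widetilde{\cV(f)}$ and $\widetilde{\cV(g)}$, each birational to $\PP^{N-2}$. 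The blow-up formula relates $\cV(fg)$ to its strict transform (the discrepancy lives in Tate-twisted cohomology of the exceptional $\PP^{N-2}$ and therefore dies in $\gr(0)$), and a Leray/localisation analysis of $\widetilde{\cV(fg)}\to\PP^{N-2}$ then splits off a Gysin contribution from the branch locus $\cV(ab)$, shifted by one in cohomological degree, which is exactly the desired identification in $\gr(0)$.

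The main obstacle will be controlling the degeneracy of the projection over the base locus $B_0=\cV(f^1,g^1,f_1,g_1)\subset\PP^{N-2}$, where the fibers of $\widetilde{\cV(fg)}\to\PP^{N-2}$ jump to a full $\PP^1$, and over the partial base loci $\cV(f^1,f_1)$ and $\cV(g^1,g_1)$, where each individual sheet degenerates. To ensure that only the branch-locus contribution survives in $\gr(0)$, I would check that every complete intersection appearing as an auxiliary locus in the analysis (the base loci above and their intersections with $\cV(ab)$) satisfies the degree bound of Theorem \ref{t24}, so that its $\gr(0)$-primitive cohomology vanishes; the hypothesis $\deg fg\leq N$ is exactly what is needed for these bounds. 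Once every auxiliary piece is killed, the only surviving term in $\gr(0)$ is the desired isomorphism $\gr(0) H^{n+1}_{prim}(\cV(fg))\cong \gr(0) H^n_{prim}(\cV(ab))$, completing the proof.
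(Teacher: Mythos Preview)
Your Mayer--Vietoris reductions on both ends are correct and overlap with the paper: the passage $\gr(0)H^{n-1}_{prim}(\cV(a,b))\cong\gr(0)H^n_{prim}(\cV(ab))$ is exactly the paper's unnumbered lemma, and the analogous step for $f,g$ is implicit in its remark that the proposition is equivalent to~(\ref{DenomGenericStepVersion2}). The divergence is in the central geometric step.

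You propose to compare $\cV(fg)\subset\PP^{N-1}$ with $\cV(ab)\subset\PP^{N-2}$ by analysing the strict transform $\widetilde{\cV(fg)}$ as a reducible two-sheeted cover ramified along $\cV(ab)$. The paper instead works directly with the \emph{intersection} $R=\cV(f,g)$ and projects it from $P$. The key observation (Proposition~\ref{p5}) is that $\pi$ restricts to a genuine isomorphism of schemes
\[
R\setminus\bigl(R\cap\cV(f^1,g^1)\bigr)\ \overset{\sim}{\longrightarrow}\ \cV([f,g]_x)\setminus\cV(f^1,g^1),
\]
with inverse $x=-f_1/f^1$ on the complement of $\cV(f^1)$ and $x=-g_1/g^1$ on the complement of $\cV(g^1)$. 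Two localisation sequences then reduce everything to killing a \emph{single} auxiliary locus $\cV(f^1,g^1)\subset\PP^{N-2}$, whose degree sum $\deg f^1+\deg g^1=\deg(fg)-2\leq N-2$ is precisely the bound in Theorem~\ref{t24}. The excised piece $R\cap\cV(f^1,g^1)$ is a cone, so it dies in $\gr(0)$ via the Tate twist in Lemma~\ref{lemcone} without any degree hypothesis.

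Your route through $\cV(fg)$ has a genuine gap in the degree claim. The partial base locus $\cV(f^1,f_1)\subset\PP^{N-2}$ has degree sum $2\deg f-1$, and $B_0=\cV(f^1,g^1,f_1,g_1)$ has degree sum $2\deg(fg)-2$; neither is bounded by $N-2$ under the hypothesis $\deg(fg)\leq N$ alone (take $\deg f$ close to $N$). So the assertion that ``the hypothesis $\deg fg\leq N$ is exactly what is needed for these bounds'' is not justified for those loci. It may well be that these strata still vanish in $\gr(0)$ for structural reasons (appearing only through $\AAA^1$- or cone-fibrations, hence Tate-twisted), but that has to be argued, and once you unwind a Mayer--Vietoris on $\widetilde{\cV(fg)}=\widetilde{\cV(f)}\cup\widetilde{\cV(g)}$ you are led straight back to projecting $\widetilde{\cV(f)}\cap\widetilde{\cV(g)}\simeq\cV(f,g)$ onto $\cV([f,g]_x)$ --- i.e.\ to the paper's argument, after an unnecessary detour through $\cV(fg)$.
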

It is sometimes more convenient to state this in the form
\begin{equation}\label{DenomGenericStepVersion2}
\gr(0) H_{prim}^n( \cV(fg) )  \cong \gr(0) H_{prim}^{n-1}( \cV(ab) )\ , \hbox{ for all } n \ .
\end{equation}Ê

The proof is split into two parts.
\begin{proposition}\label{p5}
Let  $f=f^1 x + f_1$ and $g=g^1 x+g_1$ be homogeneous polynomials, where $f^1,f_1,g^1,g_1\in k[x_2,\ldots,x_N]$,  and $[f,g]_x \neq 0$. Suppose that, for all $n$, \begin{equation} \label{p5assumption}
\gr(0) H^n_{prim}(\cV(f^1,g^1))=0\ .
\end{equation}Then, for all $n$, 
\begin{equation}
\gr(0) H^{n}_{prim}(\cV(f ,g )) =  \gr(0)
H^n_{prim}(\cV(f^1g_1-f_1g^1))\ .
\end{equation}
\end{proposition}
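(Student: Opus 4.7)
The plan is to use projection from the point $p_0 = [1:0:\cdots:0] \in \PP^{N-1}$ to identify $\cV(f, g) \subset \PP^{N-1}$ with the resultant hypersurface $D := \cV([f,g]_x) \subset \PP^{N-2}$, up to ``boundary'' pieces that vanish in $\gr(0)$-primitive cohomology. The key observation is that on the fiber of $\pi : \PP^{N-1} \setminus \{p_0\} \to \PP^{N-2}$ over $q$, both $f$ and $g$ become linear equations in the vertical coordinate $x$, and $[f,g]_x(q)$ is exactly the condition that they have a common root, provided the leading coefficients $f^1(q), g^1(q)$ do not both vanish.

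Concretely, let $C := \cV(f^1, g^1) \subset \PP^{N-1}$, which is the cone from $p_0$ over $L := \cV(f^1, g^1) \subset \PP^{N-2}$, and set $W := \cV(f^1, g^1, f_1, g_1) \subset \PP^{N-2}$. Stratify
$$\cV(f, g) = X_1 \sqcup X_2, \qquad X_2 := \cV(f, g) \cap C = \cV(f^1, g^1, f_1, g_1), \qquad X_1 := \cV(f, g) \setminus C,$$
where one uses $f|_C = f_1$, $g|_C = g_1$ to identify $X_2$, which is the cone from $p_0$ over $W$. Similarly stratify $D = (D \setminus L) \sqcup L$. The crucial geometric step is to check that $\pi$ restricts to an isomorphism $X_1 \xrightarrow{\sim} D \setminus L$: on the open cover $\{f^1 \neq 0\} \cup \{g^1 \neq 0\}$ of $D \setminus L$, the inverse is given by formulas of the form $q \mapsto [-f_1(q) : f^1(q) q_2 : \cdots : f^1(q) q_N]$, and the two local formulas agree on overlaps because $f^1 g_1 = f_1 g^1$ holds on $D$.

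With this identification, I would apply the localization sequence \eqref{seq : loc1} in primitive cohomology to each of $\cV(f, g) \supset X_2$ and $D \supset L$, and apply the exact functor $\gr(0)$. The common term $H^r_c(X_1) \cong H^r_c(D \setminus L)$ couples the two sequences. The hypothesis gives $\gr(0) H^r_{prim}(L) = 0$ for all $r$. For the other error term, Lemma \ref{lemcone} identifies $H^r_{prim}(X_2) \cong H^{r-2}_{prim}(W)(-1)$; since the Tate twist shifts the weight filtration up by $2$ and the Hodge filtration up by $1$, the Hodge-theoretic bounds \eqref{Xhodgenos} force $\gr(0) H^r_{prim}(X_2) = 0$ automatically, with no hypothesis on $W$. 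Both long exact sequences therefore degenerate, yielding
$$\gr(0) H^r_{prim}(\cV(f, g)) \;\cong\; \gr(0) H^r_c(X_1) \;\cong\; \gr(0) H^r_c(D \setminus L) \;\cong\; \gr(0) H^r_{prim}(D).$$

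The principal obstacle is the scheme-theoretic verification that $\pi|_{X_1}$ is an isomorphism, not merely a bijection of points; the remaining cohomological manipulations are formal consequences of localization, the cone lemma, and the hypothesis. Degenerate configurations (for instance $\deg f = 1$ with $f^1$ a nonzero constant, which forces $C = \varnothing$) only trivialise the stratification and require no separate treatment.
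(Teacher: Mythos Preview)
Your proposal is correct and follows essentially the same route as the paper: stratify $\cV(f,g)$ by intersecting with the cone $\cV(f^1,g^1)$, use the cone lemma to kill the $\gr(0)$ of the closed stratum, identify the open stratum via projection with $\cV([f,g]_x)\setminus\cV(f^1,g^1)$, and then use a second localization sequence together with the hypothesis on $\cV(f^1,g^1)\subset\PP^{N-2}$. The paper handles your ``principal obstacle'' exactly as you suggest, by writing down the explicit inverse $x=-f_1/f^1$ (resp.\ $x=-g_1/g^1$) on the two charts.
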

\begin{proof}
Let us write $\PP^{N-1}= \PP(x:x_2:x_3:\ldots:x_N)$, $\PP^{N-2}=\PP(x_2:\ldots:x_N)$, and denote
$\cV(f,g)\subset \PP^{N-1}$ simply by $R$.

The closed subscheme $R\cap \cV(f^1,g^1)\subset
R$ gives rise to a sequence $(\ref{seq : loc1})$:
\begin{multline}\label{b37}
\rightarrow H^{n-1}_{prim}(R\cap\cV(f^1,g^1)) \rightarrow
H^n_c(R\backslash
(R\cap\cV(f^1,g^1)))\rightarrow\\
H^n_{prim}(R) \rightarrow H^n_{prim}(R\cap\cV(f^1,g^1)) \rightarrow 
\end{multline}
It follows from the linearity of $f$ and $g$ that the intersection $R \cap \cV(f^1,g^1)$ is  a cone over $\cV(f^1,f_1,g^1,g_1)$, and so lemma \ref{lemcone} implies that
\begin{equation}
H^{m}_{prim}(R\cap\cV(f^1,g^1))\cong
H^{m-2}_{prim}(\cV(f^1,f_1,g^1,g_1))(-1)
\end{equation}
for any $m$. Since  the  grading functor is exact,  (\ref{b37})  implies that
\begin{equation} \label{p5eq1}
gr(0) H^n_{prim}(R) \cong gr(0) H^n_c(R\backslash
(R\cap\cV(f^1,g^1))) \ .
\end{equation}
Now the projection $\PP^{N-1} \rightarrow \PP^{N-2}$ from the point $p= (1: 0 : \ldots :0)$ gives an isomorphism from 
$R\backslash
R\cap\cV(f^1,g^1)$   to
$\cV(f^1g_1-f_1g^1)\backslash\cV(f^1,g^1)$ (the  inverse map  is given by  $x=-f_1/f^1$ on the complement of $\cV(f^1)$  and by
$x=-g_1/g^1$ on the complement of $\cV(g^1)$).  Therefore we have:
\begin{equation}\label{p5eq2}
H^n_c(R\backslash (R\cap\cV(f^1,g^1)))\cong
H^n_c(\cV(f^1g_1-f_1g^1)\backslash\cV(f^1,g^1)).
\end{equation}
A final application of the localization sequence $(\ref{seq : loc1})$ for the inclusion of the closed subscheme
$\cV(f^1,g^1)\subset\cV(f^1g_1-f_1g^1)$
gives \begin{multline} \label{p5mainseq}
\rightarrow H^{n-1}_{prim}(\cV(f^1,g^1))\rightarrow
H^n_c(\cV(f^1g_1-f_1g^1)\backslash \cV(f^1,g^1))\\\rightarrow
H^n_{prim}(\cV(f^1g_1-f_1g^1))\rightarrow
H^n_{prim}(\cV(f^1,g^1))\rightarrow
\end{multline}
By  assumption $(\ref{p5assumption})$, the functor  $\gr(0)$ induces an isomorphism between the graded pieces of the two terms in the middle of the previous sequence.
Combining this with isomorphisms $(\ref{p5eq1})$ and $(\ref{p5eq2})$,  we conclude that, for all $n$, 
\begin{multline} \nonumber 
\gr(0) H^n_{prim}(R) \overset{(\ref{p5eq1})}{\cong}
\gr(0) H^n_c(R\backslash (R\cap\cV(f^1,g^1))) \overset{(\ref{p5eq2})}{\cong} \\
\gr(0) H^n_c(\cV(f^1g_1-f_1g^1)\backslash\cV(f^1,g^1))\cong \gr(0)
H^n_{prim}(\cV(f^1g_1-f_1g^1)) \ .
\end{multline}
\end{proof}
 
\begin{lemma} Let $a, b \in k[x_1,\ldots, x_N]$ be homogeneous  polynomials which satisfy $\deg a, \deg b < N$. Then for all $n$, 
\begin{equation}\label{b51} 
    \gr(0)H_{prim}^n(\cV(ab))\cong \gr(0)H_{prim}^{n-1}(\cV(a,b))\ .
\end{equation}
\end{lemma}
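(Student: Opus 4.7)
The plan is to apply Mayer-Vietoris to the closed decomposition $\cV(ab) = \cV(a) \cup \cV(b)$, whose intersection is precisely the complete intersection $\cV(a,b)$. This gives the sequence $(\ref{seq :  MV})$, which I would use in primitive cohomology (exactly as the paper has already noted for the localization sequence):
\begin{multline*}
\to H^{n-1}_{prim}(\cV(a)) \oplus H^{n-1}_{prim}(\cV(b)) \to H^{n-1}_{prim}(\cV(a,b)) \\
\to H^n_{prim}(\cV(ab)) \to H^n_{prim}(\cV(a)) \oplus H^n_{prim}(\cV(b)) \to
\end{multline*}

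Next I would apply the exact functor $\gr(0)$ to this sequence. The hypothesis $\deg a, \deg b < N$ means $\cV(a)$ and $\cV(b)$ are hypersurfaces in $\PP^{N-1}$ of degree at most $N-1$, so the cohomological Chevalley--Warning theorem (theorem \ref{t20}) yields $\gr(0) H^r_{prim}(\cV(a)) = \gr(0) H^r_{prim}(\cV(b)) = 0$ for every $r$. Consequently the flanking direct sums in the Mayer-Vietoris sequence vanish after applying $\gr(0)$, and the connecting homomorphism becomes the desired isomorphism
$$\gr(0) H^{n-1}_{prim}(\cV(a,b)) \cong \gr(0) H^n_{prim}(\cV(ab))\ .$$

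The one slightly delicate point — and the main thing I would want to spell out — is the passage of the Mayer-Vietoris sequence to primitive cohomology. This follows from a snake-lemma comparison with the Mayer-Vietoris sequence of $\PP^{N-1}$ covered trivially by itself, which is split exact since $H^{\bullet}(\PP^{N-1})$ is pure Tate; quotienting by the pullback from $\PP^{N-1}$ therefore preserves exactness. Alternatively, one may simply argue with full cohomology: for $n-1 \geq 1$ one has $\gr(0) H^{n-1}(\PP^{N-1}) = 0$, so in the relevant range the $\gr(0)$ of $H^\bullet$ and of $H^\bullet_{prim}$ coincide, and the statement reduces to an immediate application of Mayer-Vietoris and Chevalley--Warning in the usual cohomology.
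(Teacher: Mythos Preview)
Your proof is correct and follows essentially the same route as the paper: apply Mayer--Vietoris (in primitive cohomology) to the covering $\cV(ab)=\cV(a)\cup\cV(b)$, then use the cohomological Chevalley--Warning theorem to kill the $\gr(0)$ of the $\cV(a)$ and $\cV(b)$ terms, so the connecting map becomes an isomorphism. Your extra paragraph justifying the passage to primitive cohomology is a nice addition that the paper takes for granted.
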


\begin{proof}
The Mayer-Vietoris sequence $(\ref{seq :  MV})$ gives
\begin{equation}\label{b50}
    \longrightarrow H_{prim}^{n-1}(\cV(a,b))\longrightarrow H_{prim}^n(\cV(ab))\longrightarrow H_{prim}^n(\cV(a))\oplus H_{prim}^n(\cV(b))\longrightarrow
\end{equation}
By the assumption on the degrees, the  Chevalley-Warning theorem \ref{t20} 
 implies that   $\gr(0)H_{prim}^m(\cV(a))$ and $\gr(0)
H_{prim}^m(\cV(b))$  vanish for all $m$.    The  previous sequence then  gives the required isomorphism
$$\gr(0)H_{prim}^n(\cV(ab))\cong \gr(0)H_{prim}^{n-1}(\cV(a,b))\ .$$ 
\end{proof}
Now we return to the proof of proposition \ref{propDenomRed}.

 \begin{proof} By the assumption on the degrees, $\deg f^1g^1 \leq  N-2$, and therefore $\cV(f^1,g^1)$ satisfies the condition of the Chevalley-Warning theorem \ref{t24}, and
 so $(\ref{p5assumption})$ holds. By proposition $\ref{p5}$, we have
 $$ \gr(0) H_{prim}^n( \cV(f,g) )  \cong \gr(0) H_{prim}^n( f^1g_1 - f_1g^1 )  =\gr(0) H_{prim}^n( \cV(ab) ) $$
  Since the factorization $(\ref{fgfactorizesasab})$ is non-trivial, $\deg a,  \deg b \leq N-2$ and the previous lemma implies that 
  $\gr(0) H_{prim}^n( \cV(ab) )  \cong \gr(0) H_{prim}^{n-1}( \cV(a,b) ) $, as required.
   \end{proof}

   \subsection{Cohomological vanishing}
   The argument in the proof of proposition  \ref{p5} can be turned around, giving the following lemma.
   
   \begin{lemma}  \label{lemabtoacommab}
   Let  $f=f^1 x + f_1$ and $g=g^1 x+g_1$ be homogeneous polynomials, where $f^1,f_1,g^1,g_1\in k[x_2,\ldots,x_N]$,  and $[f,g]_x \neq 0$. 
    Suppose for all $n$ that 
   \begin{equation} \label{assumpresvanishes}  \gr(0) H^n_{prim} (\cV(f^1g_1-f_1g^1)) =0\ .
   \end{equation}
   Then for all $n$,  
   \begin{equation} \label{fgloosea1} 
   \gr(0) H^n_{prim} ( \cV(f,g)) = \gr(0) H^n_{prim} ( \cV(f^1,g^1)) \ .
   \end{equation} 
   \end{lemma}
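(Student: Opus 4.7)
The plan is to run the three-step exact sequence argument of Proposition~\ref{p5} almost verbatim, simply exchanging which end-term of the final localization sequence we kill using our hypothesis. The three geometric inputs---the localization sequence for $R\cap\cV(f^1,g^1)\subset R$, the cone lemma~\ref{lemcone}, and the linear projection from $(1:0:\cdots:0)$---all depend only on the linearity of $f$ and $g$ in $x$, and are insensitive to the cohomological hypotheses at play.

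Setting $R=\cV(f,g)\subset\PP^{N-1}$, the first two reductions go through with no change from the proof of Proposition~\ref{p5}. Since $R\cap\cV(f^1,g^1)$ is a cone over $\cV(f^1,f_1,g^1,g_1)\subset\PP^{N-2}$, lemma~\ref{lemcone} supplies a Tate twist by $(-1)$, and the resulting weight/Hodge shift forces $\gr(0) H^\bullet_{prim}(R\cap\cV(f^1,g^1))$ to vanish. Exactness of $\gr(0)$ applied to~(\ref{b37}) then yields
\begin{equation*}
\gr(0) H^n_{prim}(\cV(f,g)) \cong \gr(0) H^n_c\bigl(R\setminus(R\cap\cV(f^1,g^1))\bigr),
\end{equation*}
and the projection $\PP^{N-1}\to\PP^{N-2}$ identifies this with $\gr(0) H^n_c(\cV(f^1g_1-f_1g^1)\setminus\cV(f^1,g^1))$ exactly as in~(\ref{p5eq2}).

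The new ingredient enters at the third localization sequence~(\ref{p5mainseq}) attached to the closed inclusion $\cV(f^1,g^1)\subset\cV(f^1g_1-f_1g^1)$. Rather than using the vanishing of $\gr(0) H^\bullet_{prim}(\cV(f^1,g^1))$, as in Proposition~\ref{p5}, we now invoke hypothesis~(\ref{assumpresvanishes}) to annihilate the $\gr(0) H^\bullet_{prim}(\cV(f^1g_1-f_1g^1))$-terms of (\ref{p5mainseq}). Exactness of $\gr(0)$ then identifies the middle $H^\bullet_c$-term of~(\ref{p5mainseq}) with $\gr(0) H^\bullet_{prim}(\cV(f^1,g^1))$, and chaining this with the previous two identifications produces~(\ref{fgloosea1}).

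I do not foresee any serious obstacle: the proof is the mirror image of Proposition~\ref{p5}, obtained by exchanging the roles of the two end-terms in the final localization sequence. The only point requiring care is an orderly bookkeeping of the cohomological indices across the three exact sequences, together with a brief remark that the cone argument and the projection step require no modification since they make no use of the hypothesis on either $\cV(f^1,g^1)$ or $\cV(f^1g_1-f_1g^1)$.
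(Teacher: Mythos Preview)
Your proposal is correct and is exactly the paper's own argument: the paper's one-line proof simply cites (\ref{p5eq1}), (\ref{p5eq2}), (\ref{p5mainseq}) and the hypothesis (\ref{assumpresvanishes}), and your key observation---that (\ref{p5eq1}) holds unconditionally because the Tate twist coming from the cone lemma already kills $\gr(0)$---is precisely what allows the hypothesis to be swapped to the other end of (\ref{p5mainseq}). Your caution about index bookkeeping is well placed (the chain through (\ref{p5mainseq}) actually lands in degree $n-1$ on the right), but since the lemma is only ever used to propagate vanishing for all $n$, this is harmless.
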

      \begin{proof}
   Equation $(\ref{fgloosea1})$ follows immediately  on applying the grading functor to  (\ref{p5mainseq}), and using isomorphisms (\ref{p5eq2}) and  (\ref{p5eq1}),  and  assumption $(\ref{assumpresvanishes})$.
\end{proof}
   
   Note that if $G$ is a connected graph with at least 3 vertices, then it follows from Euler's formula that $h_G\leq N_G-2$.
  The next proposition is the cohomological version of an analogous statement  in the Grothendieck ring which was proved in \cite{BrSch}. Equation $(\ref{PsiGVanishestoorder2})$ 
    will be reproved in the next section  under the more restrictive hypothesis that $G$ has a 3-valent vertex.
    
\begin{proposition} Let $G$ be a connected graph  with at least 3 vertices.
Then   \begin{equation}  \label{PsiGVanishestoorder2}
     \gr(i) H^n_{prim} ( \cV(\Psi_{G}))=0 \hbox{  for } \, i=0,1 \hbox{ and all } n \ , 
     \end{equation} 
  and  for any edge $e \in G$, 
\begin{equation}  \label{PsiGupperlowerVanishes}
    \gr(0) H^n_{prim} ( \cV(\Psi^e_{G}, \Psi_{G,e}))=0  \hbox{ for all } n \ . 
 \end{equation}
\end{proposition}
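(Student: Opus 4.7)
Proof of (\ref{PsiGupperlowerVanishes}): Since both $\Psi^e_G$ and $\Psi_{G,e}$ are independent of $\alpha_e$, the subscheme $\cV(\Psi^e_G,\Psi_{G,e})\subset\PP^{N_G-1}$ is the projective cone from the point $p=(0{:}\cdots{:}1{:}\cdots{:}0)$ over the same zero locus $Z'$ regarded inside the hyperplane $\PP^{N_G-2}=\{\alpha_e=0\}$. The $\AAA^1$-fibration argument of Lemma \ref{lemcone} applies verbatim in this codimension-two setting, giving the isomorphism
\[
H^n_{prim}(\cV(\Psi^e_G,\Psi_{G,e}))\cong H^{n-2}_{prim}(Z')(-1)\ .
\]
The Tate twist shifts weights up by two and Hodge levels up by one, so $\gr(0)$ of the cone vanishes on formal grounds.

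For (\ref{PsiGVanishestoorder2}) with $i=0$, this is immediate from Theorem \ref{t20}, since Euler's formula gives $\deg\Psi_G=h_G\leq N_G-2$ for connected $G$ with $|V(G)|\geq 3$.

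For (\ref{PsiGVanishestoorder2}) with $i=1$, I proceed by induction on $N_G$. Assume $h_G\geq 1$ (else $\Psi_G=1$ and there is nothing to prove), fix a non-bridge edge $e$, and set $X=\cV(\Psi_G)$ and $Z=\cV(\Psi^e_G,\Psi_{G,e})$. The contraction-deletion identity $\Psi_G=\alpha_e\Psi^e_G+\Psi_{G,e}$ gives $Z\subset X$, and projection from $p$ identifies $X\backslash Z\cong\PP^{N_G-2}\backslash\cV(\Psi_{G\backslash e})$. I apply the localization sequence
\[
\cdots\to H^{n-1}_{prim}(Z)\to H^n_c(X\backslash Z)\to H^n_{prim}(X)\to H^n_{prim}(Z)\to\cdots
\]
and verify that $\gr(1)$ of both flanking terms vanishes. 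For the terms involving $Z$, the cone isomorphism above gives $\gr(1) H^n_{prim}(Z)=\gr(0) H^{n-2}_{prim}(Z')$; applying Proposition \ref{p5} to the pair $(\Psi^e_G,\Psi_{G,e})$ viewed as linear polynomials in a further edge variable $\alpha_f$ then reduces this to $\gr(0) H^{\bullet}_{prim}(\cV(D_2))$, where $D_2=\Psi^{ef}_G\Psi_{G,ef}-\Psi^e_{G,f}\Psi^f_{G,e}$. Both the hypothesis of Proposition \ref{p5} (which concerns $\cV(\Psi^{ef}_G,\Psi^f_{G,e})$) and the final statement about $\cV(D_2)$ are trivially satisfied, because each of these Dodgson polynomials is independent of both $\alpha_e$ and $\alpha_f$, so its vanishing locus in $\PP^{N_G-2}$ is a cone with vanishing $\gr(0)$. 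For the open complement, the inductive hypothesis applied to $G\backslash e$ gives $\gr(i) H^\bullet_{prim}(\cV(\Psi_{G\backslash e}))=0$ for $i=0,1$, and the localization sequence for $\cV(\Psi_{G\backslash e})\subset\PP^{N_G-2}$ then forces $\gr(1) H^n_c(X\backslash Z)=0$ for $n$ in the generic range.

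The main technical obstacle will be handling the finitely many small $n$: Tate classes from the ambient $H^\ast(\PP^{N_G-2})$ can a priori contribute to $\gr(1) H^n_{prim}(X)$ in low degree via the ambient Lefschetz class, and these potential contributions must be ruled out directly, using Deligne's bounds (\ref{Xhodgenos}) together with the low-dimensional geometry of $\cV(\Psi_G)$.
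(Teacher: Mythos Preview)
There is a genuine gap, and it stems from a confusion of ambient projective spaces. By the paper's convention, $\cV(f_1,\ldots,f_m)$ lives in $\PP^{N-1}$ where $N$ is the number of variables in the polynomial ring; since $\Psi^e_G,\Psi_{G,e}\in\QQ[\alpha_i:i\neq e]$, the statement (\ref{PsiGupperlowerVanishes}) concerns $Z'=\cV(\Psi^e_G,\Psi_{G,e})\subset\PP^{N_G-2}$, not the cone $Z\subset\PP^{N_G-1}$. Your cone argument therefore proves only the trivial fact that $\gr(0)H^n_{prim}(Z)=0$, not the content of (\ref{PsiGupperlowerVanishes}). You tacitly concede this in your treatment of (\ref{PsiGVanishestoorder2}) for $i=1$, where you correctly reduce $\gr(1)H^n_{prim}(Z)$ to $\gr(0)H^{n-2}_{prim}(Z')$ and then attempt a separate argument for the latter.

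That separate argument fails by the same error. In Proposition~\ref{p5}, if $\cV(f,g)\subset\PP^{M-1}$ then both the hypothesis scheme $\cV(f^1,g^1)$ and the resultant scheme $\cV([f,g])$ sit in $\PP^{M-2}$. Applied here with $M-1=N_G-2$, both $\cV(\Psi^{ef}_G,\Psi^f_{G,e})$ and $\cV(D_2)$ live in $\PP^{N_G-3}$, where there is no spare coordinate left and hence no cone structure. The argument can be repaired: by contraction--deletion $\cV(\Psi^{ef}_G,\Psi^f_{G,e})=\cV(\Psi^e_{G\backslash f},\Psi_{G\backslash f,e})$, so the hypothesis of Proposition~\ref{p5} is exactly (\ref{PsiGupperlowerVanishes}) for the smaller graph $G\backslash f$ and follows by induction on $N_G$; and by the Dodgson identity $D_2=-(\Psi^{e,f}_G)^2$, so $\gr(0)H^{\bullet}_{prim}(\cV(D_2))=0$ by Theorem~\ref{t20} in $\PP^{N_G-3}$ (degree $h_G-1\le N_G-3$). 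The paper reaches the same inductive step in reverse order, via Lemma~\ref{lemabtoacommab}: it first uses Dodgson plus Chevalley--Warning to kill $\gr(0)$ of the resultant, and then concludes $\gr(0)H^n_{prim}(\cV(\Psi^e_G,\Psi_{G,e}))\cong\gr(0)H^n_{prim}(\cV(\Psi^e_{G\backslash f},\Psi_{G\backslash f,e}))$. Finally, your closing worry about small $n$ is unfounded: the identification $H^n_c(X\backslash Z)\cong H^{n-1}_{prim}(\cV(\Psi_{G\backslash e}))$ holds for every $n$, and passing to primitive cohomology has already disposed of the ambient Tate classes.
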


\begin{proof} We first prove $(\ref{PsiGupperlowerVanishes})$ by induction on the number of edges of $G$. The induction step is as follows.
Let $e'$ be an edge of $G$ distinct from $e$. By the contraction-deletion relations,
$\Psi^{e}_G= \Psi^{e e'}_G \alpha_{e'}+   \Psi^{e}_{G,e'}$  and $\Psi_{G,e}= \Psi^{e'}_{G,e} \alpha_{e'} +   \Psi_{G,e e'}$. 
We wish to apply the previous lemma with $f= \Psi^{e}_G$ and $g=\Psi_{G,e}$, and $x= \alpha_{e'}$.
The Dodgson identity implies that the resultant of $f$ and $g$ factorizes:
  $$\Psi^{e}_{G,e'} \Psi^{e'}_{G,e}- \Psi^{ee'}_{G}\Psi_{G,ee'} = (\Psi^{e,e'}_G)^2\ ,$$ and  in particular, 
$$\gr(0) H^n_{prim}(\cV( \Psi^{e}_{G,e'} \Psi^{e'}_{G,e}- \Psi^{ee'}_{G}\Psi_{G,ee'})) = \gr(0) H^n_{prim} ( \cV(\Psi^{e,e'}_G))$$
for all $n$.  The polynomial $\Psi^{e,e'}_G$ is of degree $h_{G}-1$ and so by the Chevalley-Warning theorem \ref{t20}, the right-hand side of the previous equation vanishes, and therefore
 condition (\ref{assumpresvanishes}) holds. The previous lemma then gives
\begin{eqnarray}
\gr(0) H^n_{prim}  ( \cV(\Psi^e_{G}, \Psi_{G,e})) & = &  \gr(0) H^n_{prim}  ( \cV(\Psi^{ee'}_{G}, \Psi^{e'}_{G,e})) \nonumber \\
&  = &\gr(0) H^n_{prim}  ( \cV(\Psi^{e}_{G\backslash e'}, \Psi_{G\backslash e',e}))  \end{eqnarray}
where the second line follows by contraction-deletion. If $G\backslash e'$ is connected, it has one fewer edges and loops than $G$, so the induction goes through (if $G\backslash e'$ is not connected, its graph polynomial 
vanishes and $(\ref{PsiGupperlowerVanishes})$ holds trivially.)

Now we turn to $(\ref{PsiGVanishestoorder2})$, which is again proved by induction on the number of edges of $G$.
 Let us write  $\PP^{N-i}$ for $\PP(\alpha_i,\ldots, \alpha_N)$, when $i=1,2$. By contraction-deletion, we have
 $\Psi_G = \Psi_G^1 \alpha_1 + \Psi_{G,1}$. Let  $V=\cV(\Psi_G, \Psi_G^1)\subset \PP^{N-1}$ and let $U=\cV(\Psi_G)\backslash V$.  The localization sequence $(\ref{seq : loc1})$ gives
\begin{equation}\label{b35}
\rightarrow H^{n}_c(U)\rightarrow H^n(\cV(\Psi_G))\rightarrow
H^n(V)\rightarrow H^{n+1}_c(U)\rightarrow
\end{equation}
By the linearity of $\Psi_G$,  $U$ is isomorphic to $\PP^{N-2}\backslash\cV(\Psi_G^1)$.
Applying  the localization sequence $(\ref{seq : loc1})$ once again to the inclusion $\cV(\Psi_G^1) \subset \PP^{N-2}$, and taking primitive
cohomology, implies  that  for all $n$, 
\begin{equation} \label{p7indstep}
 H^n_c(U) \cong H^{n}_c(\PP^{N-2}\backslash\cV(\Psi_G^1)) \cong  H^{n-1}_{prim}(\cV(\Psi_G^1))\ . 
 \end{equation}
 By contraction-deletion, $\Psi_G^1= \Psi_{G \backslash 1}$, where $G\backslash 1$ is either disconnected, or   has one fewer edges and loops than $G$. Therefore by induction hypothesis
 we have  
 \begin{equation}\label{Uvanishestoorder2}
 \gr(i)  H^n_c(U) = 0 \hbox{ for } i =0,1\ .
 \end{equation}
Now consider the projection $\PP^{N-1}\backslash\,  p\rightarrow
\PP^{N-2}$ from the point  $p= (1: 0 : \ldots :0)$. It follows from the shape of $\Psi_G$ that 
$V$ is a cone over $\cV(\Psi_G^1,\Psi_{G,1})\subset \PP^{N-2}$, and therefore by lemma \ref{lemcone},
\begin{equation}\label{b35.5}
H^n_{prim}(V)\cong H^{n-2}_{prim}(\cV(\Psi_G^1,\Psi_{G,1}))(-1)
\end{equation}
for all $n$.  By equation $(\ref{PsiGupperlowerVanishes})$, it follows that 
$$\gr(i) H^n_{prim}(V) =0 \hbox { for } i =0,1\ .$$
Combining this with $(\ref{Uvanishestoorder2})$ and applying the grading functors to the sequence $(\ref{b35})$, we conclude that
$\gr(i) H^n(\cV(\Psi_G)) =0 $ for $i=0,1$ as required.
\end{proof}

\begin{lemma} \label{lem2valent} Suppose that $G$ is connected, satisfies $h_G \leq N_G-3$,  and   has a $2$-valent vertex. Then in addition,   for all $n$, 
$$\gr(2) H^n_{prim}(X_G) =0\ .$$
\end{lemma}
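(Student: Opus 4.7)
The plan is to exhibit $X_G$ as a linear cone over the graph hypersurface of the series-reduced graph $G'$ (obtained by removing the $2$-valent vertex $v$ and amalgamating its two incident edges), and then reduce the desired $\gr(2)$-vanishing to the $\gr(1)$-vanishing of the previous proposition via lemma~\ref{lemcone} together with the Tate twist that it produces.

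The first step is to make the factorization of $\Psi_G$ explicit. Let $v$ be the $2$-valent vertex, with incident edges $e_1,e_2$ joining it to vertices $u_1,u_2$. Iterated contraction-deletion on $\alpha_1$ and $\alpha_2$ writes
\begin{equation*}
\Psi_G \;=\; \Psi_{G\setminus\{e_1,e_2\}}\,\alpha_1\alpha_2 \;+\; \Psi_{G\setminus e_1/e_2}\,\alpha_1 \;+\; \Psi_{G/e_1\setminus e_2}\,\alpha_2 \;+\; \Psi_{G/\{e_1,e_2\}}.
\end{equation*}
Because $v$ is isolated in $G\setminus\{e_1,e_2\}$ the leading coefficient vanishes, and both linear coefficients collapse onto the same polynomial $\Psi_{G_{sc}}$, where $G_{sc}$ denotes the graph obtained by deleting $v$ together with $e_1,e_2$. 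Hence
\begin{equation*}
\Psi_G \;=\; \Psi_{G_{sc}}\,(\alpha_1+\alpha_2)\;+\;\Psi_{G/\{e_1,e_2\}},
\end{equation*}
which one then recognises as $\Psi_{G'}$ evaluated at $\beta=\alpha_1+\alpha_2$, where $G'$ is the graph obtained from $G$ by replacing the arc $u_1\!-\!v\!-\!u_2$ with a single edge $e$ carrying variable $\beta$ (a second application of contraction-deletion to $e$ confirms this).

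Next I would make the linear coordinate change $(\alpha_1,\alpha_2)\mapsto (\beta,\gamma)=(\alpha_1+\alpha_2,\,\alpha_1-\alpha_2)$ on $\PP^{N_G-1}$. In the new coordinates $\Psi_G$ is manifestly independent of $\gamma$, so $X_G$ is a linear cone with apex $(0:1:0:\cdots:0)$ over $X_{G'}\subset \PP^{N_G-2}$. Lemma~\ref{lemcone} then delivers
\begin{equation*}
H^n_{prim}(X_G) \;\cong\; H^{n-2}_{prim}(X_{G'})(-1),
\end{equation*}
and applying the exact functor $\gr(2)$, and using that the Tate twist $(-1)$ decreases Hodge indices by one and weight indices by two, yields $\gr(2)\,H^n_{prim}(X_G)\cong \gr(1)\,H^{n-2}_{prim}(X_{G'})$.

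To finish I would verify the hypotheses of the previous proposition for $G'$. Connectedness of $G'$ is inherited from that of $G$, the series reduction being a homeomorphism on topological realisations; and the hypothesis $h_G\le N_G-3$, combined with Euler's formula $V_G=N_G-h_G+1$, forces $V_G\ge 4$, so $V_{G'}=V_G-1\ge 3$. The previous proposition then kills the right hand side, giving $\gr(2)\,H^n_{prim}(X_G)=0$ as required. The only content in the argument is the explicit shape of $\Psi_G$ at a $2$-valent vertex that produces the cone structure; everything else is bookkeeping, and I do not anticipate a real obstacle.
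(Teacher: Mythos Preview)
Your proof is correct and follows essentially the same approach as the paper: the paper also uses the identity $\Psi_G=\Psi_{G\backslash 2\q 1}(\alpha_1+\alpha_2)+\Psi_{G\q 1,2}$ to exhibit $X_G$ as a cone over $X_{G\q 1}$ (your $G'$ is precisely $G\q 1$), applies lemma~\ref{lemcone}, and concludes via equation~(\ref{PsiGVanishestoorder2}). Your version is a bit more explicit in justifying the series-reduction identity and in checking that $G'$ has at least three vertices, but the argument is the same.
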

\begin{proof}
Let the edges incident to the $2$-valent vertex be $1,2$. Then 
$$\Psi_G = \Psi_{G\backslash 2 \q 1} (\alpha_1+\alpha_2) + \Psi_{G\q 1,2}\ ,$$
which follows from contraction-deletion. By changing variables, one sees that 
 $X_G$ is a cone over $X_{G\q 1}$ where 
$\Psi_{G\q 1} = \Psi_{G\backslash 2\q 1} \alpha_2 + \Psi_{G\q 1,2}.$ Hence by lemma \ref{lemcone}, 
$$H^n_{prim} (X_G ) = H^{n-2}_{prim} (X_{G\q 1})(-1)\ ,$$
and the conclusion follows immediately from equation $(\ref{PsiGVanishestoorder2})$.
\end{proof}

\subsection{Initial reductions}
We have shown that under some mild conditions 
$$\gr(i) H^n (X_G) =0 \hbox{ for } i = 0, 1\ .$$
The next goal is to compute the first non-trivial piece, $\gr(2) H^n (X_G)$ in terms of some hypersurfaces defined by some related polynomials.
For this, it is convenient to assume that $G$ has  a  three-valent vertex. Note that if   $G$ is connected and  $2 h_G \leq N+1$ (the case of interest)  then $G$ automatically has a vertex of degree at most three. The case of a two-valent vertex
is trivial and covered by lemma  \ref{lem2valent}. 

\begin{proposition}\label{p3}
Let $G$ be a connected graph  with a  3-valent vertex, satisfying $h_G \leq N_G-2$.  Denote the edges incident to this vertex by $1,2,3$. Then 
\begin{equation}
\gr(2) H^{n}_{prim}(X_G) =  \gr(0)
H^{n-4}_{prim}(\cV(\Psi^{13,23}_G, \Psi^{1,2}_{G,3})) 
\end{equation} 
for all $n$.
\end{proposition}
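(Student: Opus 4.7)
The plan is to eliminate the three edge variables $\alpha_1,\alpha_2,\alpha_3$ meeting the chosen three-valent vertex in sequence, tracking the shifts in cohomological degree and weight grading at each stage. Each elimination exploits the linearity of the relevant polynomial in the chosen variable and iterates the projection-and-localization technique used to prove $(\ref{PsiGVanishestoorder2})$, feeding through the cone formula of lemma \ref{lemcone}. The Dodgson identity provides the algebraic engine that factors the resultants so as to match the weight-drop pattern of the denominator-reduction sequence $D_0=\Psi_G^2\to D_1=\Psi_G^1\Psi_{G,1}\to D_2=(\Psi_G^{1,2})^2\to D_3=f_0 f_3$.

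To implement the first step, I would write $\Psi_G=\Psi_G^1\alpha_1+\Psi_{G,1}$, project $X_G\subset\PP^{N-1}$ from the point $p_1=(1:0:\ldots:0)$, and splice together the localization sequence for $V_1:=\cV(\Psi_G^1,\Psi_{G,1})\subset X_G$ and that for $\cV(\Psi_G^1)\subset\PP^{N-2}$, exactly as in the proof of $(\ref{PsiGVanishestoorder2})$. Lemma \ref{lemcone} applied to the cone $V_1$ together with the $\gr(2)$-vanishing of $H^n_{prim}(\cV(\Psi_G^1))$ supplied by lemma \ref{lem2valent}, which applies because $G\setminus 1$ inherits a two-valent vertex from the three-valent one, should yield $\gr(2)H^n_{prim}(X_G)\cong\gr(1)H^{n-2}_{prim}(\overline{V}_1)$. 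The same projection-and-localization strategy applied again, now to $\overline{V}_1$ with the $\alpha_2$-linear decompositions $\Psi_G^1=\Psi_G^{12}\alpha_2+\Psi^1_{G,2}$ and $\Psi_{G,1}=\Psi_{G,1}^2\alpha_2+\Psi_{G,12}$, combined with the Dodgson-identity resultant $[\Psi_G^1,\Psi_{G,1}]_{\alpha_2}=\pm(\Psi_G^{1,2})^2$ and $\gr(0)$-vanishing from Chevalley--Warning (theorem \ref{t20}) applied to $\Psi_G^{1,2}$ (of degree $h_G-1\leq N-3$), should give $\gr(1)H^{n-2}_{prim}(\overline{V}_1)\cong\gr(1)H^{n-2}_{prim}(\cV(\Psi_G^{1,2}))$. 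Finally the single hypersurface $\cV(\Psi_G^{1,2})\subset\PP^{N-3}$ with $\Psi_G^{1,2}=f_0\alpha_3+f_3$ linear in $\alpha_3$ admits one last cone-style reduction: projection from the $\alpha_3$-coordinate point together with lemma \ref{lemcone} and $(\ref{PsiGupperlowerVanishes})$ to kill the error terms yields $\gr(1)H^{n-2}_{prim}(\cV(\Psi_G^{1,2}))\cong\gr(0)H^{n-4}_{prim}(\cV(f_0,f_3))$. Chaining the three isomorphisms yields the claim.

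The main obstacle will be the middle step, where the Dodgson resultant is a perfect square $(\Psi_G^{1,2})^2$ — the weight-drop case of the denominator reduction, to which proposition \ref{p5} and lemma \ref{lemabtoacommab} do not immediately apply in the graded piece we want. One must either adapt proposition \ref{p5} so that it tracks $\gr(1)$ in place of $\gr(0)$ throughout, verifying the corresponding higher-weight vanishing hypothesis for the pair $(\Psi_G^{12},\Psi_{G,1}^2)$ by iterating the inductive technique behind $(\ref{PsiGupperlowerVanishes})$, or else assemble several localization sequences and read off the $\gr(1)$ piece at the end. A secondary obstacle is managing boundary cases of the degree inequality $h_G\leq N_G-2$, where lemma \ref{lem2valent} barely fails to apply directly to $G\setminus 1$; those cases may have to be handled by tweaking the choice of incident edge $1$ or by an ad hoc Chevalley--Warning bound.
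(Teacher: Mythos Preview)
Your edge-by-edge elimination is a genuinely different route from the paper's. The paper does \emph{not} reduce out $\alpha_1,\alpha_2,\alpha_3$ one at a time; instead it exploits the three-valent structure $(\ref{Psistar})$ to make the substitution $\beta_i=f_0\alpha_i+f_i$ on the locus $f_0\neq 0$, which turns $f_0\Psi_G$ into the fixed affine quadric $\beta_1\beta_2+\beta_1\beta_3+\beta_2\beta_3$. The open stratum $U_1=X_G\setminus\cV(f_0)$ is then a $Q$-bundle with $H^4_c(Q)\cong\QQ(-2)$, and a single K\"unneth argument supplies the full weight-$2$ shift in one shot. The closed stratum $X_G\cap\cV(f_0)$ is then an $\AAA^2$-bundle over $\cV(f_0)\setminus\cV(f_0,f_1,f_2,f_3)$, and a Mayer--Vietoris together with identity $(\ref{b11})$ collapses $\cV(f_0,f_1,f_2,f_3)$ to $\cV(f_0,f_3)$. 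This bypasses your ``main obstacle'' entirely: there is no intermediate $\gr(1)$-computation, and no need for an upgraded proposition~\ref{p5}.

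Your approach can in fact be pushed through, but the mechanism you name for the middle step is not the right one. The $\gr(1)$-analogue of proposition~\ref{p5} needs (i) $\gr(0)$-vanishing for the four-fold intersection $\cV(\Psi_G^{12},\Psi^1_{G,2},\Psi^2_{G,1},\Psi_{G,12})$ and (ii) $\gr(1)$-vanishing for the pair $\cV(\Psi_G^{12},\Psi^2_{G,1})$. Neither follows by ``iterating the inductive technique behind $(\ref{PsiGupperlowerVanishes})$'', because the pair $(\Psi_G^{12},\Psi^2_{G,1})=(\Psi_{G\setminus 1,2},\Psi_{G\q 1\setminus 2})$ is not of the form $(\Psi_H^e,\Psi_{H,e})$ for a single $H$. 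What actually works is to compute these loci directly from $(\ref{Psistar})$: one finds $\cV(\Psi_G^{12},\Psi^2_{G,1})=\cV(f_0,f_1+f_3)$, which by $(\ref{b11})$ equals $\cV(f_0,f_1,f_3)$, a cone over the very scheme the paper handles via $(\ref{PsiGupperlowerVanishes})$ applied to $G'=G\setminus 2\q 3$; and the four-fold intersection is a cone over $\cV(f_0,f_1,f_2,f_3,f_{123})$, whence (i) is automatic after the Tate twist. So your route and the paper's ultimately feed through the same identity $(\ref{b11})$, but the quadric trick reaches it far more directly and avoids the boundary case $h_G=N_G-2$ (i.e.\ $|V(G)|=3$) where your appeal to lemma~\ref{lem2valent} on $G\setminus 1$ genuinely fails.
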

 \begin{proof}
Write $\PP^{N-1} =\PP^{N-1}(\alpha_1, \ldots, \alpha_N)$ and  $\PP^{N-4} =\PP^{N-4}(\alpha_4, \ldots, \alpha_N)$.
We stratify $X_G\subset \PP^{N-1}$ in a similar manner to  Proposition 23 in \cite{BrSch}.

Let $f_0,f_1,f_2,f_3, f_{123}$ be the polynomials defined by $(\ref{b9})$, and recall that  the graph polynomial $\Psi_G$ can be expressed in the 
 form $(\ref{Psistar})$, which we repeat here:
$$f_0(\alpha_1\alpha_2+\alpha_1\alpha_3+\alpha_2\alpha_3)+(f_1+f_2)\alpha_3+(f_2+f_3)\alpha_1+(f_1+f_3)\alpha_2+f_{123},
$$ 
where $f_0 f_{123} = f_1f_2+f_1f_3+f_2f_3$. 
The closed subscheme $X_G\cap \cV(f_0)\subset X_G$ gives rise to  the following exact  localization sequence $(\ref{seq : loc1})$:
\begin{equation}\label{b14}
\longrightarrow H^{n}_c(U_1)\longrightarrow H^{n}_{prim}(X_G)\longrightarrow
H^{n}_{prim}(X_G\cap\cV(f_0))\longrightarrow H^{n+1}_c(U_1)\longrightarrow
\end{equation}
where $U_1:=X_G\backslash (X_G\cap\cV(f_0))$. 
Consider the projection $\pi:\PP^{N-1}\backslash S\rightarrow\PP^{N-4}$, where   $S$ denotes the plane
   $\cV(\alpha_4,\ldots, \alpha_N)$, and let 
  $U'_1:=\pi(U_1)$. After  making the change of variables $\beta_i=f_0\alpha_i+f_i$ for $i=1,2,3$, we find that 
\begin{equation}
f_0\Psi_G=\beta_1\beta_2+\beta_1\beta_3+\beta_2\beta_3\ .
\end{equation}
The right-hand side  defines an affine quadric $Q$ in $\AAA^3$. Since the above change of coordinates is invertible outside $\cV(f_0)$,  we have
   $\pi : U_1 \overset{\sim}{\rightarrow} Q \times U_1'$, where 
   $U_1' \cong \PP^{N-4} \backslash \cV(f_0)$.
    One easily shows that 
$H^{k}_c(Q)$ is concentrated in degree four and $H^{4}_c(Q)\cong \QQ(-2)$. Therefore  by  the K\"{u}nneth formula,
  \begin{equation}\label{b16}
 H^{n}_c(U_1)\cong H^4_c(Q) \otimes H^{n-4}_c(U'_1) \cong   H^{n-4}_c(\PP^{N-4} \backslash \cV(f_0))(-2)\ , 
\end{equation}
for all $n$.  A localization sequence $(\ref{seq : loc1})$ applied to $\cV(f_0) \subset \PP^{N-4}$ gives
$$  H^{m-1}_{prim}( \cV(f_0) ) \cong H^{m}_c( \PP^{N-4} \backslash \cV(f_0)  )\ .$$
By equation $(\ref{b9})$, $\deg f_0 = h_G-2$ and hence  $\deg f_0 \leq N-4$ by assumption. It follows from the Chevalley-Warning theorem  \ref{t20} that 
 $\gr(0) H^{m-1}_{prim}( \cV(f_0) )=0$. By $(\ref{b16})$ we deduce that $\gr(i) H^{n}_c(U_1) =0$ for $i\leq 2$, and therefore
 by applying the grading functors to $(\ref{b14})$, we have 
\begin{equation}\label{b17}
 \gr(i) H^{n}_{prim}(X_G)\cong\gr(i) H^{n}_{prim}(X_G\cap\cV(f_0))
\end{equation}
for $i \leq 2$.  Now consider  $X_G\cap\cV(f_0)$. By $(\ref{Psistar})$,
\begin{equation}\label{Psiatf0}
\Psi_G|_{f_0=0}=(f_1+f_2)\alpha_3+(f_1+f_3)\alpha_2+(f_2+f_3)\alpha_1+f_{123}.
\end{equation}
Let $Y:=X_G\cap\cV(f_0)\cap\cV(f_1,f_2,f_3)$. One has the localisation sequence
\begin{equation}\label{b20}
\rightarrow H^{n-1}_{prim}(Y)\rightarrow H^{n}_c(U_2)\rightarrow
H^{n}_{prim}(X_G\cap\cV(f_0))\rightarrow H^{n}_{prim}(Y)\rightarrow,
\end{equation}
where $U_2:=X_G\cap\cV(f_0)\backslash Y$.  From equation $(\ref{Psiatf0})$ we  have  
$Y = Y \cap \cV(f_{123})$ and $Y \cap  \cV(f_{123}) \cong \AAA^3 \times \cV(f_0,f_1,f_2,f_3,f_{123})$, and hence
\begin{equation}
H^{n}_{prim}(Y)\cong H^{n}_{prim}(Y\cap\cV(f_{123}))\cong
H^{n-6}_{prim}(\cV(f_0,f_1,f_2,f_3,f_{123}))(-3)
\end{equation}
for all $n$, where  $\cV(f_0,f_1,f_2,f_3,f_{123})\subset \PP^{N-4}$.
Applying  $\gr(i)$ to (\ref{b20}) gives
\begin{equation}\label{b23}
\gr(i) H^{n}_{prim}(X_G\cap\cV(f_0))\cong \gr(i) H^{n}_c(U_2)
\end{equation}
for $i\leq 2$.  Equation $(\ref{Psiatf0})$ defines a family of non-degenerate hyperplanes over  
 $U_2'=\pi(U_2)=\cV(f_0)\backslash\cV(f_0,f_1,f_2,f_3)$, so  
 $U_2$ is an $\AAA^2$-bundle over $U_2'$. Since $\AAA^2$ has trivial cohomology (or by  applying the localization sequence $(\ref{seq : loc1})$ successively with respect to 
 $f_i+f_j=0$), it is easy to see that 
 \begin{equation}\label{b24}
H^{n}_c(U_2)\cong H^{n-4}_c(U_2')(-2).
\end{equation}
To compute the cohomology of $U_2'$, we use  the exact sequence
\begin{multline}
\rightarrow H^{n-5}_{prim}(\cV(f_0)) \rightarrow H^{n-5}_{prim}(\cV(f_0,f_1,f_2,f_3))\rightarrow\\ \rightarrow H^{n-4}_c(U'_2)\rightarrow H^{n-4}_{prim}(\cV(f_0)) \rightarrow
\end{multline}
We have already shown that $\gr(0) H^m(\cV(f_0))$ vanishes for all $m$, so the previous sequence implies that
\begin{equation} \label{b25}
  \gr(0)  H^{n-4}_c(U'_2) \cong  \gr(0)  H^{n-5}_{prim}(\cV(f_0,f_1,f_2,f_3)) \ .
  \end{equation}
The final step is to eliminate some of the   $f_i$'s.
For this,  observe  that by (\ref{b11}), 
$\cV(f_0,f_3) =\cV(f_0,f_1f_2,f_3)$. Therefore  
a Mayer-Vietoris sequence (\ref{seq :  MV}) gives
\begin{multline}\label{b32}
\rightarrow H^{n-5}_{prim}(\cV(f_0,f_1,f_3))\oplus H^{n-5}_{prim}(\cV(f_0,f_2,f_3)) \\
\rightarrow H^{n-5}_{prim}(\cV(f_0,f_1,f_2,f_3))\rightarrow H^{n-4}_{prim}(\cV(f_0,f_3))\\
\rightarrow H^{n-4}_{prim}(\cV(f_0,f_1,f_3))\oplus
H^{n-4}_{prim}(\cV(f_0,f_2,f_3)) \rightarrow.
\end{multline}
Now consider $\cV(f_0,f_1,f_3)$. By (\ref{b11}),
$$\cV(f_0,f_1+f_3)\cong\cV(f_0,f_1+f_3,f_1f_3)\cong
\cV(f_0,f_1,f_3)\ . $$
By (\ref{b11}), or by contraction-deletion, 
$\cV(f_0,f_1+f_3)\cong \cV(\Psi_{G'}^1,\Psi_{G',1})$ where $G'$ is the graph 
$G\backslash \{2\}\q3$. By 
 $(\ref{PsiGupperlowerVanishes})$,   $\gr(0) H_{prim}^{n-4} (\cV(\Psi_{G'}^1,\Psi_{G',1})) =0 $. We deduce that
 $$\gr(0) H^{n-4}_{prim}(\cV(f_0,f_1,f_3))=0\ ,$$
 and similarly for  $H^{n-4}_{prim}(\cV(f_0,f_2,f_3))$. Sequence $(\ref{b32})$ gives
 \begin{equation}\label{f0f3}
  \gr(0) H^{n-5}_{prim}(\cV(f_0,f_1,f_2,f_3))\cong \gr(0) H^{n-4}_{prim}(\cV(f_0,f_3))\ .
  \end{equation} 
Putting the isomorphisms    (\ref{b17}), (\ref{b23}), (\ref{b24}),  (\ref{b25}), (\ref{f0f3}) and  together  gives
$$ \gr(2) H^{n}_{prim}( X_G ) \cong \gr(0) H^{n-4}_{prim}(\cV(f_0,f_3))\ .$$
By (\ref{b11}),  and the remarks following it, $f_3 = \Psi^{1,2}_{G,3}$, and $f_0= \Psi^{12}_{G,3} =\Psi^{13,23}_{G}$. 
\end{proof}
 
\subsection{Denominator reduction in cohomology}
We finally restrict to the physically interesting case:  $G$ is connected and  overall log-divergent, i.e.,  
$$
N_G= 2h_G\ , \hbox{ and } N_G\geq 5\ .$$
Let $D_0= \Psi_G, D_1, \ldots, D_k$ denote the first $k\geq 5$ polynomials   in the denominator reduction with respect to some ordering on the edges of $G$.  

\begin{theorem} \label{thmcohomDR} Let $G$ be as above. Then for all $n$,
\begin{eqnarray}
\gr(i) H^{n}_{prim}(X_G)  &  = & 0 \quad   \hbox{Êfor } i =0,1 \ ,\nonumber \\ 
\gr(2) H^{n}_{prim}(X_G)  &  \cong  & \gr(0) H^{n-k}_{prim} (D_k) \ ,
\end{eqnarray}
for all $k\geq 3$ for which $D_k$ is defined.
\end{theorem}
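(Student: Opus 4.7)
The first claim $\gr(i) H^n_{prim}(X_G) = 0$ for $i = 0, 1$ follows immediately from equation $(\ref{PsiGVanishestoorder2})$. For the isomorphism, my plan is to proceed by induction on $k \geq 3$, using Proposition $\ref{p3}$ to establish the base case and chaining Proposition $\ref{p5}$ with the Mayer--Vietoris identity $(\ref{b51})$ to advance through the denominator reduction.

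For the base case $k=3$, the hypothesis $N_G = 2h_G \geq 6$ forces $G$ to have a vertex of valency at most three; the 2-valent case is immediate via lemma $\ref{lem2valent}$, so assume a 3-valent vertex incident to edges $\{1,2,3\}$. I would compute the first three denominators explicitly: the weight drop at $m=1$ yields $D_1 = \pm \Psi^1_G \Psi_{G,1}$; contraction--deletion in $\alpha_2$ together with the Dodgson identity produces $D_2 = \pm (\Psi^{1,2}_G)^2$; and the weight drop at $m=3$, expanding $\Psi^{1,2}_G = \Psi^{13,23}_G \alpha_3 + \Psi^{1,2}_{G,3}$, gives $D_3 = \pm \Psi^{13,23}_G \Psi^{1,2}_{G,3}$. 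The two factors have degrees $h_G - 2$ and $h_G - 1$, both strictly less than $N_G - 3$, so lemma $(\ref{b51})$ applies and identifies $\gr(0) H^{n-4}_{prim}(\cV(\Psi^{13,23}_G, \Psi^{1,2}_{G,3}))$ with $\gr(0) H^{n-3}_{prim}(\cV(D_3))$. Combined with Proposition $\ref{p3}$ this establishes $k=3$.

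For the inductive step, assume the isomorphism at level $k$ and that $D_{k+1}$ is defined. In the generic case $D_k = fg$ with $f, g$ linear in $\alpha_{k+1}$ and $D_{k+1} = [f,g]_{\alpha_{k+1}}$, I apply $(\ref{b51})$ to $\cV(fg) = \cV(f) \cup \cV(g)$ and then Proposition $\ref{p5}$ to chain
$$\gr(0) H^{n-k}_{prim}(\cV(fg)) \cong \gr(0) H^{n-k-1}_{prim}(\cV(f,g)) \cong \gr(0) H^{n-k-1}_{prim}(\cV(D_{k+1})).$$
The Chevalley--Warning hypothesis of Proposition $\ref{p5}$ is satisfied at every stage because $\deg f^{k+1} + \deg g^{k+1} = \deg D_k - 2 = N_G - k - 2$, strictly less than the $N_G - k - 1$ variables in the relevant ambient space.

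The main obstacle is the weight-drop transition, where $D_k = g^2$ and $D_{k+1} = f^{k+1} f_{k+1}$ with $g = f^{k+1}\alpha_{k+1} + f_{k+1}$; here Proposition $\ref{propDenomRed}$ does not apply, since $[g,g]_{\alpha_{k+1}} = 0$. My plan is to verify directly that both sides of the target isomorphism vanish. For the left-hand side, stratify $\cV(g) \subset \PP^{N_G-k-1}$ by projection from $p = (1:0:\ldots:0)$: the open stratum $\cV(g)\setminus \cV(f^{k+1},f_{k+1})$ is isomorphic to $\PP^{N_G-k-2}\setminus \cV(f^{k+1})$ via the section $\alpha_{k+1} = -f_{k+1}/f^{k+1}$, and a localization sequence for $\cV(f^{k+1}) \subset \PP^{N_G-k-2}$ identifies its $\gr(0)$ compactly supported cohomology with $\gr(0) H^*_{prim}(\cV(f^{k+1}))$, which vanishes by theorem $\ref{t20}$. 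The closed stratum is a cone over $\cV(f^{k+1},f_{k+1})\subset \PP^{N_G-k-2}$, so by lemma $\ref{lemcone}$ its primitive cohomology is Tate-twisted by $-1$ and contributes nothing to $\gr(0)$. The localization sequence on $\cV(g)$ thus forces $\gr(0) H^{n-k}_{prim}(\cV(D_k)) = 0$. For the right-hand side, $(\ref{b51})$ combined with theorem $\ref{t24}$ applied to the complete intersection $\cV(f^{k+1}, f_{k+1})$ (valid because $\deg f^{k+1} + \deg f_{k+1} = N_G-k-1$) gives $\gr(0) H^{n-k-1}_{prim}(\cV(D_{k+1})) = 0$ as well, so the two sides agree and the induction closes.
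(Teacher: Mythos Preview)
Your approach is broadly parallel to the paper's, but there are two genuine gaps.

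\textbf{The base case does not match the given ordering.} The denominators $D_0,\ldots,D_k$ are taken with respect to a \emph{fixed} ordering of the edges; nothing says that the first three edges in that ordering meet at a $3$-valent vertex. Your identification $D_3=\pm\Psi^{13,23}_G\Psi^{1,2}_{G,3}$ is only valid when $1,2,3$ are incident to a common $3$-valent vertex, so your base case establishes the isomorphism only for such special orderings. The paper resolves this by pushing two more generic steps to reach the five-invariant ${}^5\Psi(1,\ldots,5)_G$, and then uses the crucial fact that the \emph{vanishing locus of the five-invariant is independent of the order of its five edges}. Thus $\gr(2)H^n_{prim}(X_G)\cong\gr(0)H^{n-5}_{prim}(\cV({}^5\Psi))$ holds for \emph{any} five edges, in particular for the first five of the prescribed ordering, and the induction can proceed from there. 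Without this symmetry argument, you have not proved the theorem for the ordering actually given.

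\textbf{The weight-drop step has an off-by-one degree error.} In your right-hand-side computation you invoke Theorem~\ref{t24} for $\cV(f^{k+1},f_{k+1})\subset\PP^{N_G-k-2}$, asserting it applies because $\deg f^{k+1}+\deg f_{k+1}=N_G-k-1$. But Theorem~\ref{t24} requires the degree sum to be at most the dimension $N_G-k-2$ of the ambient projective space, and $N_G-k-1>N_G-k-2$, so the hypothesis fails. (Your left-hand-side vanishing is fine, and in fact follows more directly from Theorem~\ref{t20} since $\cV(D_k)=\cV(g)$ with $\deg g=(N_G-k)/2\le N_G-k-1$.) The paper sidesteps this by invoking only the generic reduction $(\ref{DenomGenericStepVersion2})$ in the induction; the weight-drop case is handled afterwards in Corollary~\ref{corwd} by stopping at the step $k$ where $D_k$ becomes a square and using Chevalley--Warning on its square root.
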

\begin{proof} The vanishing of 
$\gr(i) H^{n}_{prim}(X_G) $ for $i=0,1$ follows from equation  $(\ref{PsiGVanishestoorder2})$.
The conditions on $G$ imply that it has a vertex of valency  $3$ or less.
Suppose first of all  that it has a three-valent vertex with incident edges $1,2,3$. Proposition \ref{p3} implies that 
$$  \gr(2) H^{n}_{prim}(X_G)  \cong \gr(0) H^{n-4}_{prim}( \cV( \Psi^{13,23}_G , \Psi^{1,2}_{G,3}))\ .$$
Two applications of the generic denominator reduction step (propositions  \ref{propDenomRed} and \ref{p5}) with respect to a further two edge variables $4$ and $5$ gives
$$\gr(0) H^{n-4}_{prim}( \cV( \Psi^{13,23}_G , \Psi^{1,2}_{G,3})) \cong \gr(0) H^{n-5}_{prim}( \cV( {}^5\Psi(1,2,3,4,5)_G ))\ ,$$
where ${}^5\Psi(1,2,3,4,5)_G$ is the five-invariant of $G$ with respect to these edges.  Since the  previous equation holds for any  five edges of $G$, and since the  vanishing locus of the five-invariant does not depend on the order  of the variables,  the  equation 
\begin{equation} \label{gr2asfiveinv}
 \gr(2) H^{n}_{prim}(X_G)  \cong     \gr(0) H^{n-5}_{prim}( \cV( {}^5\Psi(1,2,3,4,5)_G )) 
 \end{equation}
in fact  holds for any set of five edges of $G$ (not necessarily containing a 3-valent vertex). We may therefore assume that the edges $1,\ldots, 5$ are the first five edges in the denominator reduction. Thus  we have
$$ \gr(2) H^{n}_{prim}(X_G)  \cong     \gr(0) H^{n-5}_{prim}( \cV( D_5)) \ .$$
It follows by induction  by $(\ref{DenomGenericStepVersion2})$  that
$$    \gr(0) H^{n-m}_{prim}( \cV( D_m))   \cong    \gr(0) H^{n-m-1}_{prim}( \cV( D_{m+1}))\ ,$$ 
for all $ m \geq 5$ such that $D_{m+1}$ is defined (and, clearly,  for $m=3,4$ as well).   Condition (\ref{p5assumption})
holds since $\cV(D_k) \subset \PP^{N-k-1}$ is of degree $2 h_G - k=N-k$.

Now consider the case when $G$ has a two-valent vertex. By lemma \ref{lem2valent}, we know that  $\gr(2)  H^{n}_{prim}(X_G) =0$, and
we know by \cite{BY}, lemma 92,  that the five-invariant vanishes in this case also.  So $(\ref{gr2asfiveinv})$ holds and the argument is as before.
The remaining cases, when $G$ has a one-valent vertex or a three-valent vertex with a self-loop, are even more trivial and left to the reader.
\end{proof}

\begin{corollary} \label{corwd} Suppose that  $G$ is  connected,  denominator reducible,  and satisfies $2h_G\leq  N_G\geq 5$. 
If $G$ has weight-drop, or $2h_G <N_G$,  then 
$$ \gr(2)  H^{n}_{prim}(X_G)=0  \quad \hbox{ for all } n \ .$$
If $G$ does not have weight-drop, then 
\begin{equation} 
\gr(2) H^{n}_{prim}(X_G)\cong\left\{
\begin{aligned}
\QQ(-2) \ , \;\; \hbox{ if } n=N_G-2 \\
 0 \ , \;\; \quad \quad  \hbox{otherwise}.
\end{aligned}  \right.
\end{equation}
\end{corollary}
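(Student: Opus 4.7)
The plan is to apply Theorem \ref{thmcohomDR} with a carefully chosen value of $k$ in each of the three sub-cases, then to compute $\gr(0) H^{n-k}_{prim}(\cV(D_k))$ directly using Chevalley-Warning (Theorem \ref{t20}). A short preliminary computation shows that $\deg D_m = \deg D_{m-1} - 1$ whenever $D_m$ is defined: for a generic step $D_{m-1} = (f^m\alpha_m + f_m)(g^m\alpha_m + g_m)$ the resultant $f^m g_m - g^m f_m$ has degree $\deg D_{m-1} - 1$, and for a weight drop $D_{m-1} = (f^m\alpha_m + f_m)^2$ the polynomial $f^m f_m$ has degree $2\deg f^m + 1 = \deg D_{m-1} - 1$. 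Hence $\deg D_k = 2h_G - k$ throughout the reduction.

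If $2h_G < N_G$, I take $k=3$, which is legitimate since $N_G \geq 5$ and $G$ is denominator-reducible. The hypersurface $\cV(D_3) \subset \PP^{N_G-4}$ then has degree $2h_G - 3 \leq N_G - 4$, so Theorem \ref{t20} gives $\gr(0) H^n_{prim}(\cV(D_3)) = 0$, and Theorem \ref{thmcohomDR} produces $\gr(2) H^n_{prim}(X_G) = 0$ for all $n$.

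If $2h_G = N_G$ and there is a weight drop at some $m \geq 4$, then by definition $D_{m-1} = h^2$ with $\deg h = (N_G - m + 1)/2$. The underlying set $\cV(h^2) = \cV(h)$ has degree $\deg h \leq N_G - m$, which equals the dimension of the ambient $\PP^{N_G - m}$, so another application of Theorem \ref{t20} yields $\gr(0) H^n_{prim}(\cV(D_{m-1})) = 0$; taking $k = m-1 \geq 3$ in Theorem \ref{thmcohomDR} then gives $\gr(2) H^n_{prim}(X_G) = 0$.

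Finally, if $2h_G = N_G$ and $G$ has no weight drop, I take $k = N_G - 2 \geq 3$. The polynomial $D_{N_G - 2}$ is quadratic in two variables, and the denominator-reducibility of $G$ (which guarantees $D_{N_G - 1}$ is defined) forces it to factor in $\alpha_{N_G-1}$ into two linear forms; the no-weight-drop hypothesis rules out these being equal, so $\cV(D_{N_G-2}) \subset \PP^1$ is a pair of distinct points. A direct calculation gives $H^0_{prim}(\cV(D_{N_G-2})) \cong \QQ$ (pure Tate of weight zero) and vanishing in all higher degrees, and Theorem \ref{thmcohomDR} then yields $\gr(2) H^{N_G-2}_{prim}(X_G) \cong \QQ(-2)$ together with vanishing in other degrees. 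The principal technical tasks are the preliminary degree bookkeeping, which ensures that Chevalley-Warning is applicable at each step, and the identification of $\cV(D_{N_G-2})$ as two distinct points in this final case.
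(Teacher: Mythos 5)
Your proof is correct and follows essentially the same route as the paper: apply Theorem \ref{thmcohomDR} for a suitable $k$ and then compute $\gr(0)H^{n-k}_{prim}(\cV(D_k))$ via Chevalley--Warning (or directly, for the two distinct points $\cV(D_{N_G-2})\subset\PP^1$ in the non-weight-drop case); your degree bookkeeping and the identification of the final reduction as a pair of points are exactly the details the paper leaves implicit. The only omission is the second branch of the weight-drop definition, in which $D_m$ vanishes identically for some $m$; there $\cV(D_m)=\PP^{N_G-m-1}$ has trivial primitive cohomology, so the conclusion is immediate.
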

\begin{proof}
The weight-drop case follows immediately from the previous theorem. The case when $2h_G<N_G$ follows from the previous 
theorem combined with the Chevalley-Warning theorem \ref{t20}.
In the other case,  the final stage in the denominator reduction is a polynomial
$D_{N-2}$ of bidegree $(1,1)$  in two variables.   Therefore  $\cV(D_{N-2}) \subset \PP^1$  is isomorphic to two distinct points, and  satisfies 
$H_{prim}^n( \cV(D_{N-2}) )= \QQ(0)$ if $n=0$, and vanishes otherwise. The result then follows from the previous theorem.
\end{proof}

\newpage

\section{Reduction of differential forms} \label{sectRedForms}

\subsection{Smoothness results}
We prove some preliminary results on the smoothness of certain
complements of graph hypersurfaces. Let  $e$ be an edge of $G$. The
following proposition was proved in \cite{Pat}, \cite{BSY}.

\begin{proposition} \label{propsmooth} The hypersurface complement  $X_{G\backslash e} \backslash (X_{G\backslash e} \cap X_{G\q e})$ is smooth.
\end{proposition}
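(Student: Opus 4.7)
The plan is to verify the Jacobian criterion on the open subscheme $U := X_{G\backslash e}\setminus(X_{G\backslash e}\cap X_{G\q e})\subset \PP^{N_G-2}$. Applying contraction--deletion to $\Psi_{G\backslash e}=\Psi^e_G$ with respect to each $f\neq e$ gives $\partial \Psi^e_G/\partial \alpha_f = \Psi^{ef}_G$, so smoothness at $p\in U$ is equivalent to the implication: if $\Psi^e_G(p)=0$ and $\Psi^{ef}_G(p)=0$ for every $f\neq e$, then $\Psi_{G,e}(p)=0$. I would argue the contrapositive.

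The key device is the matrix picture. Recall $\Psi^e_G=\det M_G^{(e)}$ and $\Psi_{G,e}=\det(M_G|_{\alpha_e=0})$. The matrix $M_G|_{\alpha_e=0}$ is obtained from $B:=M_G^{(e)}$ by bordering with the row and column of $M_G$ indexed by $e$; since $\alpha_e$ appears only in the removed $(e,e)$ entry, the entries of this border (the incidences of $e$ in the vertex block) are independent of $\alpha_e$. The bordered-determinant formula (a special case of the Desnanot--Jacobi identity) therefore gives
\begin{equation*}
\Psi_{G,e}\;=\;\pm\,r^{T}\operatorname{adj}(B)\,r,
\end{equation*}
where $r$ encodes these incidences. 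Hence it suffices to prove that $\operatorname{adj}(B)|_p=0$, i.e.\ that $B|_p$ has corank $\geq 2$; equivalently, every $(n-2)\times(n-2)$ minor of $B|_p$ vanishes.

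To force this, I would first combine the Dodgson identity from the preliminaries (with $x=e$, $a=b=f$, $I=J=K=\emptyset$) with the expansion $\Psi_G=\Psi^e_G\alpha_e+\Psi_{G,e}$ to obtain
\begin{equation*}
\Psi_G\,\Psi^{ef}_G - \Psi^e_G\,\Psi^f_G=\pm(\Psi^{e,f}_G)^{2}.
\end{equation*}
The left-hand side vanishes at $p$ under our hypothesis, so $\Psi^{e,f}_G(p)=0$ for every $f\neq e$. A further application of the Desnanot--Jacobi identity with removed index pairs $\{e,i\}$ and $\{e,j\}$ then writes each remaining $(n-2)\times(n-2)$ minor of $B|_p$ as a difference of products involving $\Psi^e_G$ or $\Psi^{e,\cdot}_G$ (already known to vanish at $p$), divided by $\Psi_G(\alpha_e,p)=\Psi_{G,e}(p)$. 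If we assume $\Psi_{G,e}(p)\neq 0$, we may divide, forcing every such minor to vanish, hence $\operatorname{adj}(B)|_p=0$, hence $\Psi_{G,e}(p)=0$: contradiction.

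The main obstacle is the case where $i$ or $j$ is a vertex rather than an edge index of $M_G$: the corresponding mixed minor is not a Dodgson polynomial in the sense of the preliminaries. I would deal with these either by iterating the Desnanot--Jacobi reduction along a further edge, or by Laplace-expanding along the vertex row/column, whose entries lie in $\{-1,0,1\}$ by the definition of $\mathcal{E}_G$, thus re-expressing the mixed minor as a $\ZZ$-linear combination of edge-indexed Dodgson polynomials already known to vanish at $p$.
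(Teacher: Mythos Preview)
Your Dodgson step is correct and indeed coincides with the paper's: from $\Psi^{e}_G(p)=0$ and $\Psi^{ef}_G(p)=0$ for all edges $f$, the identity $(\Psi^{e,f}_G)^2=\Psi^{e}_G\Psi^{f}_{G,e}-\Psi_{G,e}\Psi^{ef}_G$ forces $\Psi^{e,f}_G(p)=0$. The divergence, and the gap, is in how you then conclude $\Psi_{G,e}(p)=0$.

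The bordered-determinant route stalls precisely where you flag it. Since the border vector $r$ is supported only on the vertex positions of $B$, the formula $\Psi_{G,e}=\pm r^{T}\operatorname{adj}(B)\,r$ involves only the \emph{vertex--vertex} cofactors $\det M_G(\{e,v\},\{e,w\})$; the edge--edge cofactors you successfully killed do not enter. Your Laplace-expansion fix does not close this: expanding such a minor along any row or column with entries in $\{0,\pm1\}$ (necessarily a vertex row or column) produces summands of the form $\det M_G(\{e,v,w\},\{e,g,w\})$ or $\det M_G(\{e,f,v\},\{e,v,w\})$, which carry \emph{more} vertex indices, not fewer; iterating never lands among the edge-indexed $\Psi^{I,J}_{G,K}$. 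The ``further Desnanot--Jacobi'' alternative likewise bottoms out at the first-order mixed minors $\Psi^{e,v}_G$, for which you have no independent vanishing. (A small slip: corank $\geq 2$ is equivalent to vanishing of the $(n{-}1)\times(n{-}1)$ minors of $B$, not the $(n{-}2)\times(n{-}2)$ ones.)

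The paper avoids the adjugate entirely by invoking one graph-theoretic identity you did not use. Choosing a cycle $e=1,2,\ldots,k$ through $e$, one has
\[
\Psi_{G,1}=\sum_{j=2}^{k}\lambda_j\,\alpha_j\,\Psi^{1,j}_G,\qquad \lambda_j\in\{\pm1\},
\]
so that $\Psi_{G,1}$ lies in the radical of the ideal $(\Psi^{1}_G,\Psi^{12}_G,\ldots,\Psi^{1k}_G)$ directly from the $\Psi^{1,j}_G$ you already controlled. This cycle identity is the missing ingredient; with it, the detour through $\operatorname{adj}(B)$ and vertex-indexed minors becomes unnecessary.
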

\begin{proof} We repeat the proof from \cite{BSY}.  If $G$ has no loops then the result is trivial.  Number the edges of $G$ so that  $e$ is denoted $1$, and $1, 2, \ldots, k$ forms a cycle in $G$. The first observation (proposition     24   in \cite{BSY}) is that 
\begin{equation}\label{PsiGlower1} \Psi_{G,1}= \sum_{j=2}^k  \lambda_j x_j \Psi_G^{1,j}, \hbox{ for some } \lambda_j =\pm 1
\end{equation}
Let $I$ be the ideal in $\mathbb{Q}[\alpha_i]$ spanned by $
\Psi_G^{1}, {\partial \Psi_G^1 \over \partial \alpha_2} ,\ldots,
{\partial  \Psi_G^1 \over \partial \alpha_k}$. By linearity this is
the ideal spanned by $ \Psi_G^{1}, \Psi_G^{12}, \ldots, \Psi_G^{1k} $.
 It follows from the Dodgson identity that
$$(\Psi_G^{1,j})^2 = \Psi_{G,j}^1\Psi_{G,1}^{j}- \Psi_{G,1j} \Psi_G^{1j} =\Psi_G^1\Psi_{G,1}^{j}- \Psi_{G,1} \Psi_G^{1j}\in I \ , $$
and so $\Psi_G^{1,j}\in \sqrt{I}$ for all $j\in E(G)$. By
$(\ref{PsiGlower1})$, this implies that  $\Psi_{G,1} \in \sqrt{I}.$ This
implies a fortiori that $\Psi_{G,1}$ vanishes on the singular locus of
$\cV(\Psi_G^1)$. The statement that $X_{G\backslash 1} \backslash (X_{G\backslash 1} \cap X_{G\q 1})$ is smooth  follows  from
$\Psi_{G}^1=\Psi_{G \backslash 1}$ and $\Psi_{G,1} = \Psi_{G\q 1}$,  which is  simply the  contraction-deletion relation.
\end{proof}

This result probably generalizes to the zero loci of all Dodgson
polynomials (smoothness  on the set of complex points  follows by
Patterson's theorem, which holds generally for configuration
polynomials). We only need the following special case.  Let $G$ be a
graph with a 3-valent vertex, which meets edges numbered $1,2,3$.
\begin{corollary} \label{corsmooth} The  open scheme  $\cV(\Psi_G^{13,23}) \backslash
( \cV(\Psi_G^{13,23})\cap \cV(\Psi_{G,3}^{1,2}))   $
is smooth. \end{corollary}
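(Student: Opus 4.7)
My plan is to follow closely the strategy of Proposition \ref{propsmooth}. Setting $f_0 := \Psi_G^{13,23}$ and $f_3 := \Psi_{G,3}^{1,2}$, the corollary is equivalent to the statement that $f_3$ lies in $\sqrt{I}$, where $I := (f_0,\, \partial_{\alpha_j} f_0 : j\notin\{1,2,3\}) \subset \QQ[\alpha_4,\dots,\alpha_N]$ is the ideal cutting out the singular locus of $\cV(f_0)$ in $\PP^{N-4}$.

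The first step is an algebraic identity parallel to the one used in the proof of Proposition \ref{propsmooth}. Contraction-deletion for Dodgson polynomials gives $\partial_{\alpha_j} f_0 = \Psi^{13j,23j}_G$. Plugging $I=\{1\}$, $J=\{2\}$, $K=\emptyset$, $x=3$, $a=b=j$ into the Dodgson identity yields
$$f_0 \cdot \Psi^{1j,2j}_{G,3} \;-\; f_3 \cdot \Psi^{13j,23j}_G \;=\; \Psi^{13,2j}_G \cdot \Psi^{1j,23}_G,$$
so $\Psi^{13,2j}_G \cdot \Psi^{1j,23}_G \in I$ for every $j\notin\{1,2,3\}$. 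This is directly analogous to the relation $(\Psi^{1,j}_G)^2 = \Psi_G^1\Psi_{G,1}^j - \Psi_{G,1}\Psi_G^{1j} \in I$ used in the proof of Proposition \ref{propsmooth}.

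To upgrade this to $f_3 \in \sqrt{I}$ I would proceed in two parts. (i) Further applications of the Dodgson identity, introducing a second fresh edge $k$, should produce squared relations of the form $(\Psi^{13,2j}_G)^2 \in I$ and $(\Psi^{1j,23}_G)^2 \in I$, so that each factor lies in $\sqrt{I}$. (ii) I would then establish a linear combinatorial expansion $f_3 = \sum_j \mu_j\, \alpha_j \cdot (\text{Dodgson}_j)$ whose summands are precisely Dodgsons of the kind shown to lie in $\sqrt{I}$ — the analog for $f_3$ of the cycle identity $\Psi_{G,1} = \sum_j \lambda_j \alpha_j \Psi_G^{1,j}$ used in the original proof. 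Together (i) and (ii) immediately give $f_3 \in \sqrt{I}$.

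The main obstacle will be step (ii): in Proposition \ref{propsmooth} that identity exploits edge $1$ being part of a cycle in $G$, whereas here $f_3$ is a Dodgson minor with two upper indices, and the needed identity must be derived from the 3-valent vertex geometry, most likely by manipulating the structural identities $(\ref{Psistar})$--$(\ref{b11})$ satisfied by $f_0,\dots,f_{123}$. Should this purely algebraic route prove too intricate, I would fall back on Patterson's theorem on configuration polynomials, mentioned in the remark immediately before the corollary: it gives smoothness on the set of complex points directly, and a standard reducedness argument in characteristic zero then promotes this to the required scheme-theoretic smoothness.
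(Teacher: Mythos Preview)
Your plan tries to reprove the corollary from scratch by imitating the Jacobian-ideal argument of Proposition \ref{propsmooth}, and you candidly flag that the cycle-type expansion in step (ii) is the hard part. This is a genuine gap: the identity $\Psi_{G,1}=\sum_j \lambda_j \alpha_j \Psi_G^{1,j}$ hinges on the edge $1$ lying in a chosen cycle, and there is no evident analogue expressing $f_3=\Psi^{1,2}_{G,3}$ as a linear combination of the Dodgsons $\Psi^{13,2j}_G$ or $\Psi^{1j,23}_G$ you have placed in $\sqrt{I}$. The 3-valent vertex identities $(\ref{Psistar})$--$(\ref{b11})$ do not produce such an expansion either, and your step (i) is likewise only a hope. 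Your Patterson fallback would work, but it is overkill.

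The paper's proof is far shorter and bypasses all of this: it does not redo the Jacobian calculation at all, but instead \emph{reduces} to Proposition \ref{propsmooth} for a minor of $G$. The key observation, using $(\ref{b11})$, is that on $\cV(f_0)$ one has $f_1+f_3=0 \Rightarrow f_1=f_3=0$, so $\cV(f_0,f_1+f_3)=\cV(f_0,f_1,f_3)\subset \cV(f_0,f_3)$. Hence
\[
\cV(f_0)\backslash\cV(f_0,f_3)\ \subset\ \cV(f_0)\backslash\cV(f_0,f_1+f_3).
\]
But by contraction-deletion $f_0=\Psi^{23}_{G,1}=\Psi_{H\backslash 3}$ and $f_1+f_3=\Psi^{2}_{G,13}=\Psi_{H\q 3}$ with $H=G\backslash 2\q 1$, so the right-hand side is exactly $X_{H\backslash 3}\backslash(X_{H\backslash 3}\cap X_{H\q 3})$, which is smooth by Proposition \ref{propsmooth}. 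An open subscheme of a smooth scheme is smooth, and you are done. The moral: rather than generalising the Dodgson/Jacobian machinery to new polynomials, look for an inclusion into a scheme already covered by the existing proposition.
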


\begin{proof} 
We again use the structure of the graph polynomial of a graph with a
3-valent vertex, see $(\ref{Psistar})$.  
Since $\Psi_G^{13,23} =\Psi_{G,1}^{23} =f_0$,
$$\cV(\Psi_G^{13,23}) \cap \cV(\Psi^{1,2}_{G,3}) = \cV(f_0,f_3) \ .$$
On the other hand,
$$\cV(\Psi^{23}_{G,1}) \cap \cV(\Psi^{2}_{G,13}) = \cV(f_0, f_1+f_3) =\cV(f_0,f_1,f_3) \ , $$
where the second equality follows from $(\ref{b11})$. It follows
that
$$\cV(\Psi_G^{13,23}) \backslash ( \cV(\Psi_G^{13,23})\cap \cV(\Psi^{1,2}_{G,3}))\quad  \subset \quad \cV(\Psi^{23}_{G,1}) \backslash (\cV(\Psi^{23}_{G,1}) \cap \cV(\Psi^{2}_{G,13}))\ .$$
By contraction-deletion, the right-hand side is precisely
$X_{H\backslash 3} \backslash (X_{H\backslash 3} \cap X_{H\q 3})$,
where $H =G\backslash 2\q 1$, which is smooth by proposition
\ref{propsmooth}. 
\end{proof}

\begin{corollary}  \label{corsmooth2} For any graph $G$, $\cV(\Psi_G^{12}) \backslash (\cV(\Psi_G^{12}) \cap \cV(\Psi_G^{1,2}))$ is smooth.
\end{corollary}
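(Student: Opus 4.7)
The plan is to reduce the statement to Proposition \ref{propsmooth}, applied to the subgraph $G\backslash 2$ with distinguished edge $1$, by exhibiting the open set in question as an \emph{open subscheme} of a larger open set whose smoothness is already known. Since any open subscheme of a smooth scheme is smooth, this will suffice.

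The key input is the instance of the Dodgson identity obtained by taking $I=J=K=\emptyset$, $x=2$, $a=b=1$. Combined with the sign relation $\Psi_G^{2,1}=\pm\Psi_G^{1,2}$, it yields
$$\Psi_G^{2}\,\Psi^{1}_{G,2} \;-\; \Psi_{G,2}\,\Psi_G^{12} \;=\; \pm \bigl(\Psi_G^{1,2}\bigr)^2.$$
Restricted to the hypersurface $\cV(\Psi_G^{12})$, the second term on the left-hand side vanishes, giving $(\Psi_G^{1,2})^2 = \pm\,\Psi_G^{2}\,\Psi^{1}_{G,2}$ on $\cV(\Psi_G^{12})$. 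Consequently, at any point of $\cV(\Psi_G^{12})$ at which $\Psi_G^{1,2}$ is nonzero, the polynomial $\Psi^{1}_{G,2}$ is also nonzero, which yields the containment of open subschemes
$$\cV(\Psi_G^{12}) \backslash \bigl(\cV(\Psi_G^{12}) \cap \cV(\Psi_G^{1,2})\bigr) \;\subset\; \cV(\Psi_G^{12}) \backslash \bigl(\cV(\Psi_G^{12}) \cap \cV(\Psi^{1}_{G,2})\bigr).$$

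To conclude, I would apply Proposition \ref{propsmooth} to the graph $G\backslash 2$ with edge $e=1$: by contraction-deletion, $\Psi_{G\backslash 2}^{1}=\Psi_G^{12}$ and $\Psi_{G\backslash 2,\,1}=\Psi^{1}_{G,2}$, so the right-hand side of the above containment is smooth. The left-hand side is an open subscheme of this smooth scheme, and is therefore itself smooth, which is exactly the claim of the corollary. I do not anticipate a substantial obstacle: the argument is closely parallel to that of Corollary \ref{corsmooth}, and the only real care needed is to pick out the correct instance of the Dodgson identity, namely the one that expresses $(\Psi_G^{1,2})^2$ modulo $\Psi_G^{12}$ as a product of polynomials obtained from $\Psi_G^{12}$ by a single contraction-deletion step.
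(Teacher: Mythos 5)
Your strategy is exactly the paper's: use a Dodgson identity to factor $(\Psi_G^{1,2})^2$ modulo $\Psi_G^{12}$, deduce a containment of open subschemes, and conclude by Proposition \ref{propsmooth}. (The paper uses the instance $(\Psi_G^{1,2})^2=\Psi^1_{G,2}\Psi^2_{G,1}-\Psi_{G,12}\Psi_G^{12}$, so that $\cV(\Psi_G^{12},\Psi_G^{1,2})=\cV(\Psi_G^{12},\Psi^1_{G,2}\Psi^2_{G,1})$, but your instance works just as well for producing the containment into $\cV(\Psi_G^{12})\backslash\cV(\Psi_G^{12},\Psi^1_{G,2})$.)

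There is, however, one concrete error in the last step: the identification $\Psi_{G\backslash 2,\,1}=\Psi^1_{G,2}$ is false in general. With the paper's conventions, superscripts correspond to deletions and subscripts to contractions, so $\Psi_{G\backslash 2,\,1}=\pm\Psi_{(G\backslash 2)\q 1}=\pm\Psi^2_{G,1}$, whereas $\Psi^1_{G,2}=\pm\Psi_{(G\backslash 1)\q 2}$; deleting $2$ and contracting $1$ is not the same operation as deleting $1$ and contracting $2$. Consequently, applying Proposition \ref{propsmooth} to $G\backslash 2$ with $e=1$ gives smoothness of $\cV(\Psi_G^{12})\backslash\cV(\Psi_G^{12},\Psi^2_{G,1})$, which is not the set your containment lands in. The fix is immediate and is what the paper does: apply Proposition \ref{propsmooth} to $H=G\backslash 1$ with $e=2$, so that $\Psi_{H\backslash 2}=\pm\Psi_G^{12}$ and $\Psi_{H\q 2}=\pm\Psi^1_{G,2}$, and the right-hand side of your containment is indeed $X_{H\backslash 2}\backslash(X_{H\backslash 2}\cap X_{H\q 2})$. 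With that correction your argument is complete and coincides with the paper's.
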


\begin{proof}
The Dodgson identity implies that $$\cV(\Psi_G^{12}, \Psi_G^{1,2}) = \cV(\Psi_G^{12},  \Psi^{1}_{G,2} \Psi^2_{G,1})\ .$$
Therefore, by the previous proposition, 
$$\cV(\Psi_G^{12}) \backslash \cV(\Psi_G^{12}, \Psi_G^{1,2}) =  \cV(\Psi^{12}_G) \backslash  \cV(\Psi_G^{12},  \Psi^{1}_{G,2} \Psi^2_{G,1}) \subset \cV(\Psi^{12}_G) \backslash  \cV(\Psi_G^{12},  \Psi^{1}_{G,2} )$$
is smooth,  since it is $ \cV(\Psi_{H \backslash 2}) \backslash  \cV(\Psi_{H \backslash 2},  \Psi_{H\q 2} )$, where $H = G \backslash 1$. 
\end{proof}

\subsection{The generic reduction step for differential forms}
Let $f, g$ be two homogeneous polynomials of the form 
\begin{equation} \label{fandg}
f=f^1\alpha_1+f_1\ ,\;\; g=g^1\alpha_1+g_1
\end{equation}
where $f^1,f_1,g^1,g_1 \in \QQ[\alpha_2,\ldots, \alpha_N]$, whose resultant $[f,g]_{\alpha_1}$  is nonzero.   Let
$$X_f= \cV(f)\ , \  X_g= \cV(g)\ , \  X_{f^1}= \cV(f^1)\ , \ X_{g^1}= \cV(g^1)\ , \   X_{[f,g]}= \cV([f,g]_{\alpha_1})$$
Writing $\PP^{N-i} = \PP(\alpha_i : \ldots : \alpha_N)$ for $i=1,2$,  let  $\pi: \PP^{N-1} \rightarrow \PP^{N-2}$ denote the projection from the point $(1: 0: \ldots :0)$, and let 
$\widehat{X}_{f^1},\widehat{X}_{g^1}\subset\PP^{N-1}$ be the cones
 over $X_f$ and $X_g$. We have the following picture:
 \begin{eqnarray}
  \widehat{X}_{f^1},\widehat{X}_{g^1} , X_f, X_g  \subset  & \PP^{N-1}&  \nonumber \\
  & \downarrow_{\pi} & \nonumber \\
   X_{f^1}, X_{g^1}, X_{[f,g]}   \subset  & \PP^{N-2}&  \nonumber 
   \end{eqnarray}
Note that  $X_{[f,g]}= \pi( X_f \cap X_g)$ is the discriminant. Write
$$\Omega_N = \sum_{i=1}^N (-1)^i \alpha_i d\alpha_1 \wedge \ldots  \wedge\widehat{d \alpha_i} \wedge \ldots \wedge d \alpha_N\ .$$
\begin{proposition} \label{propgeneric}
If the  cohomology class 
 $$ \Big[
 {\Omega_{N-1}  \over fg} \Big] \in  \gr_F^{N-1} H^{N-1}_{dR}( \PP^{N-1} \backslash (X_f \cup X_g))  \ ,$$ 
vanishes, then so does
$$    \Big[ {\Omega_{N-2}  \over f^1g_1-f_1g^1}
\Big]      \in \gr_F^{N-2}   H^{N-2}_{dR}(\PP^{N-2}\backslash X_{[f,g]}) \ .$$
    \end{proposition}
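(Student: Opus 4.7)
The plan is to construct a residue-type homomorphism of graded cohomology pieces
\[
\Phi\colon \gr_F^{N-1} H^{N-1}_{dR}\bigl(\PP^{N-1}\setminus(X_f\cup X_g)\bigr) \longrightarrow \gr_F^{N-2} H^{N-2}_{dR}\bigl(\PP^{N-2}\setminus X_{[f,g]}\bigr)
\]
sending $[\Omega_N/(fg)]$ to a nonzero scalar multiple of $[\Omega_{N-1}/[f,g]_{\alpha_1}]$; the conclusion then follows from the hypothesis by functoriality. I would take $\Phi$ to be the Poincar\'e residue along $X_f$, followed by transfer via the projection $\pi\colon \PP^{N-1}\setminus\{p\}\to \PP^{N-2}$ from $p=(1\!:\!0\!:\!\cdots\!:\!0)$. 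Under $\pi$, the divisor $X_f$ maps birationally onto $\PP^{N-2}$, contracting only the codimension-$(\geq 2)$ locus $\cV(f^1,f_1)$, which is what makes this transfer plausible.

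The key step is an explicit identity. Starting from the partial fraction decomposition
\[
\frac{1}{fg}\;=\;\frac{1}{[f,g]_{\alpha_1}}\!\left(\frac{f^1}{f}-\frac{g^1}{g}\right),
\]
and combining the splitting $\Omega_N = -\alpha_1\, d\alpha_2\wedge\cdots\wedge d\alpha_N - d\alpha_1\wedge\Omega_{N-1}$ with the substitution $f^1\, d\alpha_1 = df - \alpha_1\, df^1 - df_1$ and the Euler-type identity $dh\wedge \Omega_{N-1} = -(\deg h)\, h\, d\alpha_2\wedge\cdots\wedge d\alpha_N$ for any $h\in\QQ[\alpha_2,\ldots,\alpha_N]$, a direct calculation yields
\[
\frac{\Omega_N}{fg}\;=\;\frac{\deg g - \deg f}{[f,g]_{\alpha_1}}\, d\alpha_2\wedge\cdots\wedge d\alpha_N \;+\; \frac{d\log(g/f)}{[f,g]_{\alpha_1}}\wedge \Omega_{N-1}.
\]
The only simple pole along $X_f$ comes from the $-d\log f$ term, so the Poincar\'e residue along $X_f$ equals $-\Omega_{N-1}/[f,g]_{\alpha_1}$ restricted to $X_f$; the smoothness of $X_f\setminus(X_f\cap X_g)$, which is a consequence of Proposition~\ref{propsmooth}, ensures this residue is well-defined on the top Hodge-graded piece. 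Since $[f,g]_{\alpha_1}$ and $\Omega_{N-1}$ do not involve $\alpha_1$, this residue is exactly $\pi|_{X_f}^{*}\bigl(-\Omega_{N-1}/[f,g]_{\alpha_1}\bigr)$ on the open locus where $\pi|_{X_f}$ is an isomorphism.

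Applying the residue to the vanishing hypothesis gives $\pi|_{X_f}^{*}[\Omega_{N-1}/[f,g]_{\alpha_1}]=0$ in $\gr_F^{N-2} H^{N-2}_{dR}(X_f\setminus(X_f\cap X_g))$. The main obstacle is then to rigorously deduce vanishing of $[\Omega_{N-1}/[f,g]_{\alpha_1}]$ on all of $\PP^{N-2}\setminus X_{[f,g]}$ from this pullback vanishing. Since $\pi|_{X_f}$ is an isomorphism over $\PP^{N-2}\setminus\cV(f^1)$, restriction yields vanishing on that open set; to extend it across $\cV(f^1)$ I would either carry out the symmetric residue computation along $X_g$ (which covers the locus $\{g^1\neq 0\}$) and combine the two pieces of information by a Mayer--Vietoris argument, or use a localization sequence whose error term is controlled by a Chevalley--Warning-type vanishing (Theorem~\ref{t20}) applied to the codimension-$(\geq 2)$ intersection $\cV(f^1,g^1)$.
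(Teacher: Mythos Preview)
Your strategy---take the Poincar\'e residue along $X_f$, push down via $\pi$, then combine with the symmetric residue along $X_g$ through Mayer--Vietoris---is precisely the paper's approach, and your explicit partial-fraction computation is a valid (if slightly more elaborate) alternative to the paper's direct affine calculation. However, there is a genuine gap in your smoothness justification.

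You assert that $X_f\setminus(X_f\cap X_g)$ is smooth ``as a consequence of Proposition~\ref{propsmooth}''. But Proposition~\ref{propsmooth} concerns graph hypersurfaces $X_{G\backslash e}$ and $X_{G\q e}$ specifically; the present proposition is stated for \emph{arbitrary} homogeneous $f,g$ linear in $\alpha_1$, and the claim is simply false at that level of generality. For instance, with $N=4$, $f=\alpha_1\alpha_2+\alpha_3^2$ and $g=\alpha_1+\alpha_4$, the resultant $\alpha_2\alpha_4-\alpha_3^2$ is nonzero, yet the singular point $(0\!:\!0\!:\!0\!:\!1)$ of $X_f$ lies outside $X_g$. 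Without smoothness the Gysin/residue map on cohomology is not available, so your map $\Phi$ is not well-defined as written.

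The paper's remedy is cheap and uses no graph-specific input: before taking the residue one further restricts to the open set $\{f^1\neq 0\}$, i.e.\ one removes the cone $\widehat{X}_{f^1}$. Since $f^1=\partial f/\partial\alpha_1$, the singular locus of $X_f$ lies in $\cV(f^1)$, so $X_f\setminus(X_f\cap\widehat{X}_{f^1})$ is automatically smooth, and $\pi$ identifies it with $U_f=\PP^{N-2}\setminus(X_{[f,g]}\cup X_{f^1})$. The residue then lands in $H^{N-2}_{dR}(U_f)$ rather than in $H^{N-2}_{dR}(\PP^{N-2}\setminus X_{[f,g]})$, and the Mayer--Vietoris gluing with $U_g$ (your first alternative) recovers the full target. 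For the injectivity of the Mayer--Vietoris restriction map on $\gr_F^{N-2}$ one needs only the trivial Hodge bound $\gr_F^{N-2}H^{N-3}_{dR}(U_f\cap U_g)=0$ from~(\ref{Xhodgenos}); your proposed second alternative via Chevalley--Warning on $\cV(f^1,g^1)$ is not available here, since the proposition carries no degree hypothesis on $f,g$.
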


\begin{proof}
First of all, let us assume that $f^1g^1$ is non-zero. Let $i_{f^1}$ be the
inclusion 
\begin{equation}
  i_{f^1}: \PP^{N-1}\backslash (X_f\cup X_g\cup \widehat{X}_{f^1}) \hookrightarrow \PP^{N-1}\backslash (X_f\cup
  X_g).
\end{equation}
Since $f^1=\frac{\partial f}{\partial \alpha_1}$,
the singular locus of $X_f$ is contained in $X_f\cap
\widehat{X}_{f^1}$ and hence $X_f\backslash X_f\cap
\widehat{X}_{f^1}$ is smooth. Therefore we have the residue map
\begin{equation} \nonumber
\Res_f : \Omega^{n}(\PP^{N-1}\backslash(X_f\cup X_g\cup
\widehat{X}_{f^1}))\longrightarrow \Omega^{n-1}(X_f\backslash((X_g\cup
\widehat{X}_{f^1})\cap X_f)) 
\end{equation}
Let us write
\begin{equation}
U_f=\PP^{N-2}\backslash(X_{[f,g]} \cup X_{f^1}) \quad\text{and}\;\;
U_g=\PP^{N-2}\backslash(X_{[f,g]} \cup X_{g^1})
\end{equation}
From the shape $(\ref{fandg})$ of $f$, the map $\pi$ induces  an isomorphism
\begin{equation}
X_f\backslash((X_g\cup \widehat{X}_{f^1})\cap X_f)  \overset{\sim}{\longrightarrow} U_f
\end{equation}
Composing  $\Res_f\circ i^*_{f^1}$ with the induced map on  forms gives \begin{equation}\label{b128}
R_f: \Omega^{n}(\PP^{N-1}\backslash(X_f\cup X_g))\longrightarrow
\Omega^{n-1}(U_f)  
\end{equation}
In an identical manner, we have a map
\begin{equation}
R_g:\Omega^{n}(\PP^{N-1}\backslash(X_f\cup X_g))\longrightarrow
\Omega^{n-1}(U_g) 
\end{equation}
On the level of cohomology, we have a map
$$[R_f]- [R_g]: H_{dR}^{N-1} ( \PP^{N-1}  \backslash (X_f \cup X_g)) \To \big( H_{dR}^{N-2}(U_f) \oplus   H_{dR}^{N-2}(U_g) \big)(-1)$$
 On the other hand, Mayer-Vietoris gives
 $$ \ldots \To H_{dR}^{N-2} ( U_f \cup U_g) \overset{r}{\To} H_{dR}^{N-2}(U_f) \oplus H_{dR}^{N-2}(U_g) \To H_{dR}^{N-2}(U_f \cap U_g) $$
Note that $U_f \cup U_g = \PP^{N-2} \backslash X_{[f,g]}$. We now show that 
\begin{equation} \label{formidentity1} r \Big(  \Big[ {\Omega_{N-2}  \over f^1g_1-f_1g^1}
\Big]    (-1)  \Big) = \big( [R_f] - [R_g] \big) \Big( \Big[
 {\Omega_{N-1}  \over fg} \Big] \Big)\ .\end{equation}
 This identity also holds on  the level of differential forms (rather than cohomology classes).
In order to compute the image of $\frac{\Omega_{N-1}}{fg}$ under the
map $R_f$, it is enough to work on an open subset of the form $\alpha_N\neq 0$.
We have
\begin{equation}
\omega = \frac{\Omega_{N-1}}{fg}\biggl|_{\alpha_N\neq 0} =
\frac{d\alpha_1\wedge\ldots\wedge
d\alpha_{N-1}}{f(g^1\alpha_1+g_1)}.
\end{equation}
The equation $f=f^1\alpha_1+f_1$ gives
\begin{equation}
df\wedge d\alpha_2\wedge\ldots\wedge d\alpha_{N-1}=
f^1d\alpha_1\wedge d\alpha_2\wedge\ldots\wedge d\alpha_{N-1}
\end{equation}
and therefore
\begin{equation}
i^*_{f^1}(\omega)=\frac{df}{f}\wedge\frac{d\alpha_2\wedge\ldots\wedge
d\alpha_{N-1}}{f^1(g^1\alpha_1+g_1)},
\end{equation}
whose residue along $X_f$ is therefore
\begin{equation}
\frac{d\alpha_2\wedge\ldots\wedge
d\alpha_{N-1}}{f^1(g^1\alpha_1+g_1)}\bigg|_{f^1\alpha_1+f_1=0} =
\frac{d\alpha_2\wedge\ldots\wedge d\alpha_{N-1}}{f^1g_1-g^1f_1} 
\end{equation}
By  a similar calculation for $R_g$, we have 
$$R_f \Big ( {\Omega_{N-1}  \over fg}\Big) =    {\Omega_{N-2} \over f^1g_1-g^1f_1}\Big|_{U_f}\quad \hbox{ and } \quad R_g \Big ( {\Omega_{N-1}  \over fg}\Big) =    {\Omega_{N-2} \over g^1f_1-f^1g_1}\Big|_{U_g}$$
which proves $(\ref{formidentity1})$ as required. To conclude the proof, apply the graded functors $\gr_F^{N-1}$ to both sides of $(\ref{formidentity1})$. It suffices to show that the map
$$ \gr_F^{N-2}( r) :  \gr_F^{N-2} H_{dR}^{N-2} ( U_f \cup U_g) {\To}\gr_F^{N-2} H_{dR}^{N-2}(U_f) \oplus H_{dR}^{N-2}(U_g)  $$
is injective. But this follows from the fact that its kernel vanishes:
$$ \gr_F^{N-2} H_{dR}^{N-3}(U_f \cap U_g) =0 \ .$$
This is an immediate consequence of the bounds on the Hodge numbers  $(\ref{Xhodgenos})$.
The case when $f^1$ or $g^1$ vanishes is similar and left to the reader.
\end{proof}
Unfortunately, the previous proposition does not cover the case of the second reduction step, which involves the slice one lower in the Hodge filtration.
\begin{proposition} 
\label{propsecondDRstep} Let $G$  be a connected  graph with $N$ edges and a three-valent vertex with incident edges $e_1,e_2,e_3$.  Then if the class
$$ \Big[{\Omega_{N-2} \over \Psi_G^{e_1} \Psi_{G,e_1}} \Big] \in \gr^{N-3}_F H_{dR}^{N-2}(\PP^{N-2} \backslash (\cV({\Psi^{e_1}_G}) \cup \cV({\Psi_{G,e_1}}))) $$
vanishes, then so does
$$\Big[{\Omega_{N-3} \over  (\Psi_G^{e_1,e_2})^2 }  \Big] \in \gr^{N-4}_F H_{dR}^{N-3} (\PP^{N-2} \backslash \cV(\Psi_G^{e_1,e_2})) \ .$$
\end{proposition}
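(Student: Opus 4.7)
The approach is to mimic the proof of proposition \ref{propgeneric} applied to the polynomials $f=\Psi_G^{e_1}$ and $g=\Psi_{G,e_1}$, viewed as linear polynomials in the variable $\alpha_{e_2}$. By contraction-deletion,
\[ f = \Psi_G^{e_1e_2}\alpha_{e_2} + \Psi_{G,e_2}^{e_1}, \qquad g = \Psi_{G,e_1}^{e_2}\alpha_{e_2} + \Psi_{G,e_1e_2}. \]
The crucial algebraic input is the Dodgson identity (identity (4) in section 2.1) in the case $I=J=K=\emptyset$, $a=b=e_2$, $x=e_1$, which gives
\[ [f,g]_{\alpha_{e_2}} = \Psi_G^{e_1e_2}\Psi_{G,e_1e_2} - \Psi_{G,e_2}^{e_1}\Psi_{G,e_1}^{e_2} = -(\Psi_G^{e_1,e_2})^2. \]
So the resultant is, up to sign, a perfect square, which is precisely why the output form acquires a double pole along $\cV(\Psi_G^{e_1,e_2})$ and why the statement lives one step below the top of the Hodge filtration.

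Next, I would carry out the same geometric setup as in the proof of proposition \ref{propgeneric}: project from the $\alpha_{e_2}$-coordinate point $(0:1:0:\cdots:0)$, define residue maps $R_f, R_g$ along $\cV(f)$ and $\cV(g)$, and run Mayer-Vietoris on the open covering of $\PP^{N-3}\setminus\cV(\Psi_G^{e_1,e_2})$ by the analogues of $U_f$ and $U_g$. Smoothness of the strata on which the residues are taken is guaranteed by corollary \ref{corsmooth2} (which gives smoothness of $\cV(\Psi_G^{e_1e_2})\setminus\cV(\Psi_G^{e_1e_2},\Psi_G^{e_1,e_2})$, and by symmetry of the Dodgson argument the analogue for $g$). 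The residue calculation is formally identical to the one in proposition \ref{propgeneric}; it yields a form identity at the level of differential forms
\[ r\!\left(\!\left[\frac{\Omega_{N-3}}{-(\Psi_G^{e_1,e_2})^2}\right]\!(-1)\right) = \bigl([R_f]-[R_g]\bigr)\!\left(\!\left[\frac{\Omega_{N-2}}{\Psi_G^{e_1}\Psi_{G,e_1}}\right]\right) \]
whose right-hand side vanishes in cohomology by hypothesis.

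The main obstacle is the injectivity of the graded Mayer-Vietoris restriction map at the required Hodge level,
\[ \gr_F^{N-4}(r)\colon \gr_F^{N-4}H^{N-3}_{dR}(U_f\cup U_g) \To \gr_F^{N-4}H^{N-3}_{dR}(U_f)\oplus\gr_F^{N-4}H^{N-3}_{dR}(U_g). \]
In proposition \ref{propgeneric} the analogous injectivity was immediate from the Hodge-number bound $p\leq n$ in $(\ref{Xhodgenos})$ because one was working in the top of the Hodge filtration; here, working one step below the top, that bound no longer suffices. Concretely, what must be shown is the vanishing
\[ \gr_F^{N-4}H^{N-4}_{dR}\bigl(\PP^{N-3}\setminus(\cV(\Psi_G^{e_1,e_2})\cup\cV(\Psi_G^{e_1e_2})\cup\cV(\Psi_{G,e_1}^{e_2}))\bigr) = 0. \]
The plan for this is to filter $U_f\cap U_g$ by successive localization sequences for the three hypersurfaces appearing in its complement, apply the cohomological Chevalley-Warning theorems \ref{t20} and \ref{t24} (noting that each of $\Psi_G^{e_1,e_2}$, $\Psi_G^{e_1e_2}$, $\Psi_{G,e_1}^{e_2}$ has degree strictly less than $N-2$, so the degree hypothesis is satisfied), and dualize using the smoothness of $\PP^{N-3}$ minus these divisors to translate between the top of the Hodge filtration on compactly-supported primitive cohomology and the bottom on ordinary cohomology. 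The vanishing results already established in section 3, especially equation $(\ref{PsiGupperlowerVanishes})$, should cover the remaining graded pieces, and the conclusion then follows exactly as in the last paragraph of the proof of proposition \ref{propgeneric}.
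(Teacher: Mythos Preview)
Your setup is correct through the form identity: with $f=\Psi_G^{e_1}$, $g=\Psi_{G,e_1}$, the Dodgson identity gives $[f,g]_{\alpha_{e_2}}=-(\Psi_G^{e_1,e_2})^2$, and the residue computation goes through verbatim. But the argument diverges from the paper at the injectivity step, and there is a real gap.

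The paper does \emph{not} use Mayer--Vietoris with both residues. It uses only $R_f$ and shows that the single restriction
\[
\gr_F^{N-4}\,H_{dR}^{N-3}(U_f\cup U_g)\;\longrightarrow\;\gr_F^{N-4}\,H_{dR}^{N-3}(U_f)
\]
is already injective. This is done via the Gysin sequence for the smooth closed subset $X_{f^1}\setminus(X_{f^1}\cap X_{[f,g]})\subset U_f\cup U_g$ (smoothness being corollary~\ref{corsmooth2}); the kernel is then governed by $\gr_F^{N-5}H_{dR}^{N-5}$ of this stratum. Here the three-valent hypothesis is used substantively: by $(\ref{Psistar})$ one has $f^1=\Psi_G^{e_1e_2}=f_0$, a polynomial \emph{not involving $\alpha_{e_3}$}, and $\Psi_G^{e_1,e_2}=f_0\alpha_{e_3}+f_{e_3}$. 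Hence $X_{f^1}\setminus(X_{f^1}\cap X_{[f,g]})$ is an $\AAA^1$-fibration over $\cV(f_0)\setminus\cV(f_0,f_{e_3})\subset\PP^{N-4}$, which drops the cohomological degree by one and makes the required vanishing follow from the Hodge-number bound $p\le n$ alone.

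Your Mayer--Vietoris route instead requires controlling $\gr_F^{N-4}H_{dR}^{N-4}(U_f\cap U_g)$. Dualizing and localizing as you suggest, this becomes a $\gr_F^1$ (not $\gr_F^0$) vanishing statement for the primitive cohomology of the union $\cV(\Psi_G^{e_1,e_2})\cup\cV(\Psi_G^{e_1e_2})\cup\cV(\Psi_{G,e_1}^{e_2})$ in $\PP^{N-3}$. Chevalley--Warning (theorems~\ref{t20},~\ref{t24}) only gives $\gr(0)$ vanishing; and the section~3 results you cite, in particular $(\ref{PsiGupperlowerVanishes})$, are again $\gr(0)$ statements, and do not cover the Dodgson hypersurface $\cV(\Psi_G^{e_1,e_2})$. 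So the tools you invoke do not reach the graded piece you need. A symptom of the problem is that your argument never uses the three-valent vertex beyond the Dodgson identity---which holds for \emph{any} pair of edges---whereas the proposition explicitly assumes it, and the paper's proof uses it in an essential geometric way via the $\AAA^1$-fibration.
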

\begin{proof}
We use the same  notations as in  the proof of the previous proposition. Here $f= \Psi_G^{e_1}$,  $g= \Psi_{G,e_1}$, $f^1= \Psi_G^{e_1 e_2}$, and 
$[f,g] = (\Psi_G^{e_1,e_2})^2$ by the Dodgson identity. This time we only need to consider a single  residue map $R_f$
$$ R_f: H_{dR}^{N-2}(\PP^{N-2}\backslash ( X_f \cup X_g)) \To H_{dR}^{N-3} (U_f) (-1)\ .$$
It suffices to show that the inclusion of open sets induces  an injection
\begin{equation} \label{grN-4map} \gr^{N-4}_F  H_{dR}^{N-3} (U_f \cup U_g) \To  \gr^{N-4}_F H_{dR}^{N-3} (U_f)     \ .
\end{equation} 
For this,  consider the Gysin sequence associated to the inclusion
$$X_{f^1} \backslash (X_{f^1} \cap  X_{[f,g]}) \subset \PP^{N-3} \backslash X_{[f,g]} = U_f \cup U_g\ .$$
Note that $X_{f^1} \backslash (X_{f^1} \cap X_{[f,g]}) $ is smooth by corollary \ref{corsmooth2}.   
The kernel of $(\ref{grN-4map})$ is
\begin{equation} \label{ker73}
 \gr^{N-4}_F  \big( H_{dR}^{N-5} (X_{f^1} \backslash (X_{f^1} \cap  X_{[f,g]}) )(-1)\big)\ .
 \end{equation} 
From the structure of a 3-valent vertex $(\ref{Psistar})$, $f^1$ is equal to $f_{0}$ and $[f,g]$ is equal to $f_{0} \alpha_{e_3} + f_{e_3}$.  It follows that
$ X_{f^1} \backslash (X_{f^1} \cap  X_{[f,g]})$ is an $\AAA^1$-fibration over $\cV( f_{0}) \backslash \cV(f_{0}, f_{e_3})$. The  expression $(\ref{ker73})$ is therefore
$$  \gr^{N-5}_F   H_{dR}^{N-6} (\cV( f_{0}) \backslash \cV(f_{0}, f_{e_3}) ) = 0 \ .$$
\end{proof}

\subsection{The degenerate (weight drop) case}
Let $\Psi\in \QQ[\alpha_1,\ldots, \alpha_{N}]$ be a homogeneous
polynomial  of the form
$$\Psi= \Psi^e \, \alpha_e + \Psi_e\ ,$$
where $\Psi^e, \Psi_e \in   \QQ[\alpha_1,\ldots, \widehat{\alpha}_e,
\ldots,  \alpha_{N}]$ do not depend on the variable $\alpha_e$.  Let us write
$\PP^{N-1} = \PP^{N-1}(\alpha_1 : \ldots : \alpha_N)$ and $\PP^{N-2} = \cV(\alpha_e) \overset{i}{\hookrightarrow}  \PP^{N-1}$. Let
$$X= \cV(\Psi) \subset \PP^{N-1} \ ,  \ X^e= \cV(\Psi^e) \ , \  ÊX_e= \cV(\Psi_e) 
\subset \PP^{N-2}$$ 
and let   $\Xh^e$ and $\Xh^e$ in $\PP^{N-1}$ denote  the cones
over $X^e$ and $X_e$. 
\begin{proposition} \label{propdegen}
Suppose that  $X^e \backslash (X^e \cap X_e)$ is smooth.
Then there is a map
\begin{equation} \label{degendRmap}
H_{dR}^{N-2} (\PP^{N-2} \backslash (X^e \cup X_e)) \To
 H_{dR}^{N-1}(\PP^{N-1} \backslash X)
 \end{equation}
which maps the cohomology class 
\begin{equation}\label{b85}
  \Big[ {\Omega_{N-2} \over
\Psi^e \Psi_e} \Big ] \quad \hbox{ to } \quad    \Big[   {\Omega_{N-1} \over  \Psi^2} \Big] \ .
\end{equation}
Furthermore, suppose that for  all $n$,  
\begin{eqnarray} \label{propdegenvanishcond} \gr(i) H_{prim}^n(X^e) & = &0 \quad  \hbox{ for all } \quad  0\leq i \leq k+1 \ ,    \\
  \gr(i) H_{prim}^n(X_e)  & = &0 \quad  \hbox{ for all }  \quad 0\leq i \leq k  \ . \nonumber 
\end{eqnarray}Then $(\ref{degendRmap})$ induces an isomorphism:
$$
   \gr^{N-2-k}_F H_{dR}^{N-2} (\PP^{N-2} \backslash (X^e \cup X_e)) \cong
 \gr^{N-2-k}_F H_{dR}^{N-1}(\PP^{N-1} \backslash X)\ .$$ 
\end{proposition}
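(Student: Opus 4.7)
The plan is to construct the map $(\ref{degendRmap})$ as a Mayer--Vietoris coboundary composed with a pullback, and to verify the graded-piece isomorphism by chasing the resulting long exact sequences with the vanishing hypotheses. Because $\Psi=\Psi^e\alpha_e+\Psi_e$ is linear in $\alpha_e$ with $\Psi^e,\Psi_e$ independent of $\alpha_e$, the intersection $\Xh^e\cap\Xh_e$ is the cone on $X^e\cap X_e$ and is contained in $X$. Hence the two opens $U_1=U\backslash \Xh^e$ and $U_2=U\backslash\Xh_e$ cover $U=\PP^{N-1}\backslash X$ with intersection $W=\PP^{N-1}\backslash(X\cup\Xh^e\cup\Xh_e)$, and the projection $\pi$ from the apex $p=(0{:}\cdots{:}1{:}\cdots{:}0)$ realises $W\to V$ as a trivial $\GG_m$-fibration (the fibre is $\PP^1$ with the apex and the unique zero of $\Psi$ removed). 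I would define the map as the composition
\[
H_{dR}^{N-2}(V)\xrightarrow{\pi^*}H_{dR}^{N-2}(W)\xrightarrow{\delta_{MV}}H_{dR}^{N-1}(U),
\]
where $\delta_{MV}$ is the Mayer--Vietoris coboundary for the open cover $\{U_1,U_2\}$.

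For the form identity, I would work in the affine chart $\alpha_j=1$ with $j\neq e$ and set $\nu:=\bigwedge_{i\neq e,\, i\neq j}d\alpha_i$. A direct calculation using $\partial_e\Psi=\Psi^e$ and that $\Psi^e,\Psi_e$ do not involve $\alpha_e$ gives the two primitives
\[
d\Big(\frac{\nu}{\Psi\Psi^e}\Big)=-\frac{d\alpha_e\wedge\nu}{\Psi^2}=d\Big({-\frac{\alpha_e\nu}{\Psi\Psi_e}}\Big),
\]
with $\omega_1=\nu/(\Psi\Psi^e)$ regular on $U_1$ and $\omega_2=-\alpha_e\nu/(\Psi\Psi_e)$ regular on $U_2$; their difference collapses on $W$ via $\Psi=\Psi^e\alpha_e+\Psi_e$ to
\[
\frac{\nu}{\Psi\Psi^e}+\frac{\alpha_e\nu}{\Psi\Psi_e}=\frac{(\Psi_e+\alpha_e\Psi^e)\,\nu}{\Psi\Psi^e\Psi_e}=\frac{\nu}{\Psi^e\Psi_e},
\]
which is precisely $\pm\pi^*(\Omega_{N-1}/(\Psi^e\Psi_e))$ in this chart. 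Hence $\delta_{MV}(\pi^*[\Omega_{N-1}/(\Psi^e\Psi_e)])=[d\omega_1]=\pm[\Omega_N/\Psi^2]$, exactly the target class. (I read the second $\Omega_{N-1}$ in $(\ref{b85})$ as a typo for $\Omega_N$, as demanded by projective invariance.)

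For the isomorphism on $\gr(N-2-k)$, I would apply the Mayer--Vietoris long exact sequence
\[
\cdots\to H^{N-2}(U_1)\oplus H^{N-2}(U_2)\to H^{N-2}(W)\xrightarrow{\delta_{MV}} H^{N-1}(U)\to H^{N-1}(U_1)\oplus H^{N-1}(U_2)\to\cdots
\]
and control the outer terms. The hypothesis that $X^e\backslash(X^e\cap X_e)$ is smooth implies that $\Xh^e\cap U_2=\Xh^e\backslash\Xh_e$ is smooth (being an $\AAA^1$-bundle over it), so Gysin for $\Xh^e\cap U_2\subset U_2$ relates $H^r(U_2)$ to $H^r(\PP^{N-1}\backslash\Xh_e)$ and $H^{r-2}(X^e\backslash(X^e\cap X_e))(-1)$. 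Similarly $U_1$, being $\PP^{N-1}\backslash\Xh^e$ with the smooth section $X\cap(\PP^{N-1}\backslash\Xh^e)\cong\PP^{N-2}\backslash X^e$ removed, is a $\GG_m$-bundle over $\PP^{N-2}\backslash X^e$, giving $H^r(U_1)=H^r(\PP^{N-2}\backslash X^e)\oplus H^{r-1}(\PP^{N-2}\backslash X^e)(-1)$; and $W\cong V\times\GG_m$ gives $H^r(W)=H^r(V)\oplus H^{r-1}(V)(-1)$. Feeding the vanishing hypotheses $(\ref{propdegenvanishcond})$ through the Gysin sequences for $X^e,X_e\subset\PP^{N-2}$ translates into vanishing of $\gr(N-2-k)$ for the outer terms $H^{N-2}(U_i)$ and $H^{N-1}(U_i)$; this forces $\delta_{MV}$ to be an isomorphism on $\gr(N-2-k)$, and a parallel chase using the $\GG_m$-splitting of $H^{N-2}(W)$ makes $\pi^*$ an iso on the same graded piece.

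The main obstacle will be the careful bookkeeping of the Hodge and weight shifts introduced by the $\AAA^1$- and $\GG_m$-fibrations and the Tate $(-1)$-twists in the Gysin sequences, and in particular verifying that the asymmetry in $(\ref{propdegenvanishcond})$ (vanishing to order $k+1$ for $X^e$ versus order $k$ for $X_e$) matches exactly the extra twist from the Gysin term associated with the smooth divisor $\Xh^e\cap U_2$. The motivicity of $\delta_{MV}$, needed to apply the exact functor $\gr$, is standard since open-cover Mayer--Vietoris corresponds to a distinguished triangle in the triangulated category of mixed motives.
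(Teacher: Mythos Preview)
Your construction of the map via Mayer--Vietoris is correct and is genuinely different from the paper's route. The paper instead composes the residue map from the Gysin sequence for $X^e\backslash(X^e\cap X_e)\subset \PP^{N-2}\backslash X_e$, the $\AAA^1$-fibration isomorphism $i^*$, and the Gysin boundary $\gamma$ for $\Xh^e\backslash(\Xh^e\cap\Xh_e)\subset\PP^{N-1}\backslash X$; the form identity is then checked by producing a single primitive $\omega_3=\nu/(\Psi^e\Psi)$ on $U_1$ with $d\omega_3=j^*(\Omega/\Psi^2)$ and $\Res_{\Xh^e}\omega_3=(i^*)^{-1}\Res_{X^e}(\Omega_{N-2}/\Psi^e\Psi_e)$. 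Your two-primitive Mayer--Vietoris computation is an equally clean alternative for this first part.

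The gap is in your argument for the graded isomorphism. The Gysin sequence for the smooth divisor $\Xh^e\cap U_2=\Xh^e\backslash\Xh_e$ inside $U_2$ has complement $U_2\backslash\Xh^e=W$, not $\PP^{N-1}\backslash\Xh_e$; so it relates $H^r(U_2)$ to $H^r(W)$ and $H^{r-2}(X^e\backslash(X^e\cap X_e))(-1)$. This is circular: you are trying to control the outer terms of the Mayer--Vietoris sequence in order to deduce something about $H^{N-2}(W)\to H^{N-1}(U)$, but $H^r(U_2)$ is now expressed in terms of $H^r(W)$ itself. If instead you use Gysin for the smooth divisor $X\backslash\Xh_e\subset\PP^{N-1}\backslash\Xh_e$ (complement $U_2$), you find $\gr(N-2-k)H^r(U_2)\cong\gr(N-3-k)H^{r-1}(V)$, which again feeds $V$ back in; together with the $\GG_m$-splitting $H^{N-2}(W)=H^{N-2}(V)\oplus H^{N-3}(V)(-1)$, you can push this through, but only after checking a nontrivial compatibility (that the restriction $H^{N-2}(U_2)\to H^{N-2}(W)$ lands in the second summand). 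The paper sidesteps all of this: in its two Gysin sequences the outer terms are $H^*(\PP^{N-2}\backslash X_e)$ and $H^*(\PP^{N-1}\backslash(X\cup\Xh^e))$, and the latter is a $\GG_m$-fibration over $\PP^{N-2}\backslash X^e$; so the vanishing hypotheses on $X_e$ and $X^e$ kill the outer terms directly, with the extra order of vanishing on $X^e$ exactly absorbing the $(-1)$-twist from the $\GG_m$-fibre. This is where the asymmetry in the hypotheses is used, and it is cleaner than the bookkeeping your approach would require.
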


\begin{proof} By the smoothness assumption, we have one Gysin sequence coming 
from the inclusion of $X^e  \backslash (X^e \cap X_e) $
into $\PP^{N-2}\backslash X_e$:
\begin{multline}  \label{G1}
\cdots \To H_{dR}^{N-2}(\PP^{N-2}\backslash X_e)  \To H_{dR}^{N-2}(\PP^{N-2} \backslash (X^e \cup X_e)) \\
 \overset{\mathrm{Res}}{\To} H_{dR}^{N-3}(X^e \backslash (X^e \cap X_e))(-1) \To  H_{dR}^{N-1}(\PP^{N-2}\backslash X_e)  \To\cdots \ ,
\end{multline}
and another  from the inclusion of $\Xh^e\backslash ( \Xh^e\cap
X)= \Xh^e\backslash ( \Xh^e\cap
\Xh_e) $ into $\PP^{N-1} \backslash X$:
\begin{multline} \label{G2}
\cdots \To  H_{dR}^{N-2}(\PP^{N-1} \backslash  (X \cup  \Xh^e))  \To
H_{dR}^{N-3}(\Xh^e\backslash ( \Xh^e\cap \Xh_e) )(-1)\\
\overset{\gamma}{\To} H_{dR}^{N-1}(\PP^{N-1} \backslash X)
 \To  H_{dR}^{N-1}(\PP^{N-1} \backslash  (X \cup  \Xh^e))  \to  \cdots
\end{multline}
Since $\Xh^e\backslash ( \Xh^e\cap \Xh_e) $ is
an $\mathbb{A}^1$-fibration over $X^e\backslash ( X^e\cap X_e) $, we
have
$$  i^*: H_{dR}^{N-3}(\Xh^e\backslash ( \Xh^e\cap \Xh_e) ) \cong H_{dR}^{N-3}(X^e \backslash (X^e \cap X_e))$$
and the desired map  $(\ref{degendRmap})$ is
$$\gamma \circ (i^*)^{-1} \circ \mathrm{Res} : H_{dR}^{N-2}(\PP^{N-2} \backslash (X^e \cup X_e)) \To H_{dR}^{N-1}(\PP^{N-1} \backslash X)\ .$$
We now  wish to compute the image of $(\ref{b85})$ under this map.
  We use the maps
$i$ and $j$ indicated in the  following  diagram (where
all maps are inclusions)
$$
\begin{array}{ccc}
  \PP^{N-1} \backslash (X \cup \Xh^e) & \overset{j}{\To}    & \PP^{N-1} \backslash X
 \\
 \uparrow_i  &   &    \uparrow \\
 \PP^{N-2} \backslash (X^e \cup X_e)   & \To   &  \PP^{N-2} \backslash X_e 
\end{array}
$$
It suffices to  calculate with the restriction of our differential
forms to some  open affine subset   $\alpha_{N}\neq 0$ (where  $e\neq N$). We work with the forms
$$ \omega_1 =   { \Omega_{N-1} \over \Psi^2} \Big|_{\alpha_{N} =1}= {d\alpha_1\wedge \ldots  \wedge d\alpha_{N-1} \over  (\Psi^e\alpha_e + \Psi_e)^2} \ ,   $$
$$ \omega_2 =   { \Omega_{N-2} \over \Psi^e \Psi_e} \Big|_{\alpha_{N}= 1}= {d\alpha_1 \wedge\ldots  \wedge \widehat{d \alpha_e} \wedge \ldots  \wedge d\alpha_{N-1} \over \Psi^e  \Psi_e} \ ,  $$
$$ \omega_3=   {d\alpha_1 \wedge\ldots  \wedge \widehat{d \alpha_e} \wedge \ldots  \wedge d\alpha_{N-1} \over \Psi^e ( \Psi^e \alpha_e + \Psi_e)} \ ,  $$
and we can evidently ignore all signs.
By a trivial computation,  $j^* \omega_1=  d  \omega_3 $, and
therefore by the following exact commutative diagram:
$$\begin{array}{ccccc}
\Omega^{N-2}(\PP^{N-1} \backslash X) & \overset{j^*}{\To} & \Omega^{N-2}(\PP^{N-1} \backslash ( X \cup \Xh^e))&  \overset{\Res}{\To} & \Omega^{N-3}(\Xh^e \backslash ( \Xh^e \cap \Xh_e))   \nonumber \\
 \downarrow_d  & & \downarrow_d & & \downarrow_d \nonumber \\
 \Omega^{N-1}(\PP^{N-1} \backslash X) & \overset{j^*}{\To} & \Omega^{N-1}(\PP^{N-1} \backslash ( X \cup \Xh^e))&  \overset{\Res}{\To} & \Omega^{N-2}(\Xh^e \backslash ( \Xh^e \cap \Xh_e))
\end{array}$$
 and the definition of $\gamma$ as a boundary map, we have
$$[\omega_1] = \gamma ( [\mathrm{Res}_{\Xh^e\backslash (\Xh^e\cap \Xh_e)} \omega_3] ) \ . $$
Evidently, $\omega_2 =i^* \omega_3$, and so we have the 
identity
$$[\omega_1] =   \gamma \circ  (i^*)^{-1} \circ   \mathrm{Res}_{X^e \backslash (X^e\cap X_e)} [\omega_2]\ .$$
This proves $(\ref{b85}).$

For the second part, the localisation sequence $(\ref{seq : loc1})$ for $X_e\subset \PP^{N-2}$ gives
$$H^n_{prim}(X_e) \cong H^{n+1}_c (\PP^{N-2} \backslash X_e) $$
for all $n$.  Since $\PP^{N-2} \backslash X_e$ is smooth, duality reads 
$$H^{2N-4-m} (\PP^{N-2} \backslash X_e)^{\vee}  \cong H^{m}_c (\PP^{N-2} \backslash X_e)(N-2) \  . $$
The assumption $(\ref{propdegenvanishcond})$ for $X_e$ therefore implies that
$$\gr_F^{N-2-i}  H_{dR}^{n} (\PP^{N-2} \backslash X_e)=0 \quad  \hbox{ for all } 0\leq i \leq k\  . $$
By the exact sequence $(\ref{G1})$, it follows that $\Res$ induces an isomorphism on the corresponding graded pieces. 

It follows from the equation  $\Psi= \Psi^e\alpha_e+\Psi_e$ 
that  $\PP^{N-1} \backslash  (X \cup  \Xh^e)$ is a $\GG_m$-fibration over $\PP^{N-2}\backslash X^e$. Therefore,  for all $n$:
\begin{equation} \label{Gmbundledegencase}
  H_{dR}^{n}(\PP^{N-1} \backslash  (X \cup  \Xh^e))\cong H_{dR}^{n}(\PP^{N-2}\backslash
  X^e) \oplus H_{dR}^{n-1}(\PP^{N-2}\backslash
  X^e)(-1)  \ .
\end{equation}
By the assumption  $(\ref{propdegenvanishcond})$ for $X^e$, and by a similar argument to the  above, we have
$$\gr_F^{N-2-i}  H_{dR}^{n} (\PP^{N-2} \backslash X^e)=0 \quad  \hbox{for all}  \quad 0\leq i \leq k+1\  . $$
Combining this with $(\ref{Gmbundledegencase})$, we get
$$\gr_F^{N-2-i}  H_{dR}^{n} (\PP^{N-1} \backslash  (X \cup  \Xh^e))=0 \quad  \hbox{for all}  \quad 0\leq i \leq k\  , $$
and it follows from the exact sequence $(\ref{G2})$ that $\gamma$ induces an isomorphism on the corresponding graded pieces also.
\end{proof}
\subsection{Denominator reduction for framings}
We apply the previous results to graph hypersurfaces.

\begin{proposition} Let $G$ be a connected graph satisfying $2h_G \leq N_G \geq 6$.  Suppose that $G$ has a three-valent vertex with incident edges $1,2,3$.
If
$$\Big[ {\Omega_{N-1} \over \Psi_G^2 } \Big] \in \gr_F^{N- 3} H_{dR}^{N-1}(\PP^{N-1} \backslash X_G) $$
vanishes then so does
$$ \Big[{ \Omega_{N-4} \over \Psi_G^{13,23}\Psi^{1,2}_{G,3}} \Big] \in \gr_F^{N- 4}  H_{dR}^{N-4}(\PP^{N-4} \backslash \cV(\Psi_G^{13,23}, \Psi^{1,2}_{G,3})) \ .$$
\end{proposition}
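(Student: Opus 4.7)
The plan is to peel off the three variables $\alpha_1,\alpha_2,\alpha_3$ in sequence, propagating the vanishing of the cohomology class across each reduction using Propositions \ref{propdegen} and \ref{propsecondDRstep}. Concretely, I propagate vanishing through the intermediate classes
$$[\Omega_{N-2}/(\Psi_G^1\Psi_{G,1})]\in\gr_F^{N-3}H^{N-2}_{dR}(\PP^{N-2}\setminus(\cV(\Psi_G^1)\cup\cV(\Psi_{G,1})))$$
and
$$[\Omega_{N-3}/(\Psi_G^{1,2})^2]\in\gr_F^{N-4}H^{N-3}_{dR}(\PP^{N-3}\setminus\cV(\Psi_G^{1,2}))$$
to arrive at the target class.

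The first step is Proposition \ref{propdegen} applied with $\Psi=\Psi_G$, $e=1$, $k=1$, so that $\Psi^e=\Psi_G^1=\Psi_{G\setminus 1}$ and $\Psi_e=\Psi_{G,1}=\Psi_{G/1}$; smoothness is Proposition \ref{propsmooth}. The hypothesis $(\ref{propdegenvanishcond})$ requires $\gr(i)H^n_{prim}=0$ for $i\leq 2$ on $\cV(\Psi_G^1)$ and for $i\leq 1$ on $\cV(\Psi_{G,1})$. The cases $i\leq 1$ on both sides are supplied by $(\ref{PsiGVanishestoorder2})$, while the decisive extra vanishing $\gr(2)H^n_{prim}(\cV(\Psi_G^1))=0$ follows from Lemma \ref{lem2valent} applied to $G\setminus 1$: since edge $1$ meets the three-valent vertex, $G\setminus 1$ inherits a two-valent vertex, and $h_{G\setminus 1}=h_G-1\leq N-4=N_{G\setminus 1}-3$ because $2h_G\leq N$ and $N\geq 6$.

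The second step is Proposition \ref{propsecondDRstep} with $e_1=1$, $e_2=2$, applied verbatim. The third step is a second application of Proposition \ref{propdegen}, this time to $\Psi=\Psi_G^{1,2}$ with $e=3$ and $k=0$; contraction-deletion of $\Psi_G^{1,2}$ in $\alpha_3$ yields $\Psi_G^{1,2}=\Psi_G^{13,23}\alpha_3+\Psi_{G,3}^{1,2}$, matching the proposition's setup, and smoothness is Corollary \ref{corsmooth}. Since $\Psi_G^{13,23}=\Psi_{G\setminus\{1,2\}/3}$ is a genuine graph polynomial, $(\ref{PsiGVanishestoorder2})$ gives $\gr(i)H^n_{prim}(\cV(\Psi_G^{13,23}))=0$ for $i\leq 1$; the remaining hypothesis $\gr(0)H^n_{prim}(\cV(\Psi_{G,3}^{1,2}))=0$ follows from the Chevalley-Warning theorem \ref{t20}, since $\deg\Psi_{G,3}^{1,2}=h_G-1\leq N-3$.

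The main obstacle is the first step: obtaining the isomorphism of Proposition \ref{propdegen} at Hodge level $\gr_F^{N-3}$ (that is, at $k=1$) requires a vanishing of $\gr(2)$ on $\cV(\Psi_G^1)$, one notch beyond what $(\ref{PsiGVanishestoorder2})$ alone delivers. This is precisely the raison d'\^etre of Lemma \ref{lem2valent}, and it is what forces the three-valency hypothesis into the statement, since deletion of an edge at a three-valent vertex is exactly what guarantees the two-valent vertex in $G\setminus 1$ required to invoke that lemma.
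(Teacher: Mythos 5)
Your proof is correct and follows the same three-step reduction as the paper: Proposition \ref{propdegen} with $e=1$ and $k=1$, then Proposition \ref{propsecondDRstep}, then Proposition \ref{propdegen} again for $\Psi_G^{1,2}=\Psi_G^{13,23}\alpha_3+\Psi^{1,2}_{G,3}$ with $k=0$, with smoothness supplied by Proposition \ref{propsmooth} and Corollary \ref{corsmooth}. The single point where you diverge is the justification of the extra vanishing $\gr(2)H^n_{prim}(X_{G\backslash 1})=0$ needed for the first step: the paper obtains it from Corollary \ref{corwd}, using only $2h_{G\backslash 1}=2(h_G-1)<N_{G\backslash 1}$, whereas you invoke Lemma \ref{lem2valent} via the two-valent vertex that $G\backslash 1$ inherits from the three-valent vertex of $G$. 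Both work; your route has the small advantage of not leaning on Corollary \ref{corwd}, whose statement nominally assumes denominator reducibility (a hypothesis $G\backslash 1$ need not satisfy, even though the relevant case of its proof does not use it). On the other hand, your closing claim that this step is what forces three-valency into the hypotheses is not quite right: the paper's alternative justification works for deletion of an arbitrary edge, and the three-valent vertex is really needed for the structure \eqref{Psistar} underlying Proposition \ref{propsecondDRstep} and Corollary \ref{corsmooth}. (One trivial slip: for Chevalley--Warning applied to $\cV(\Psi^{1,2}_{G,3})\subset\PP^{N-4}$ the bound you need is $\deg\leq N-4$, not $N-3$; it holds anyway since $h_G-1\leq N/2-1\leq N-4$ for $N\geq 6$.)
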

\begin{proof}
First of all, by  proposition \ref{propsmooth}, $ X_{G\backslash 1} \backslash ( X_{G\backslash 1}\cap X_{G
\q 1})$ is smooth, so we may apply  proposition \ref{propdegen}.   Furthermore, we have 
\begin{eqnarray}  \gr(i) H_{prim}^n(X_{G\backslash 1}) & = &0 \quad  \hbox{ for  } \quad  0\leq i \leq 2 \ ,    \nonumber  \\
  \gr(i) H_{prim}^n(X_{G \q 1})  & = &0 \quad  \hbox{ for  }  \quad 0\leq i \leq 1  \ , \nonumber 
\end{eqnarray} 
by corollary \ref{corwd}, since $ 2h_{G\backslash 1} = 2(h_G -1) <  N_{G\backslash 1}.$ Thus
we have an isomorphism
\begin{equation} \label{drfp1}
\gr_F^{N-3} H_{dR}^{N-1} (\PP^{N-1} \backslash X_G)   \cong \gr_F^{N-3} H_{dR}^{N-2} (\PP^{N-2} \backslash (X_{G\backslash 1} \cup X_{G \q 1} ))  
\end{equation} 
and proposition \ref{propdegen} implies that 
$$ \Big[ {\Omega_{N-1} \over \Psi_G^2 }\Big] =0  \hbox{ in LHS if and only if  } 
\Big[ {\Omega_{N-2} \over \Psi_G^1 \Psi_{G,1} }\Big]=0 \hbox{ in RHS}\ .$$
For the next reduction, we   apply  proposition \ref{propsecondDRstep}, which implies that
 \begin{eqnarray}\label{drfp2}
 \hbox{if }  &\Big[ {\Omega_{N-2} \over \Psi_G^1 \Psi_{G,1}  }\Big]   \in    \gr_F^{N-3} H_{dR}^{N-2} (\PP^{N-2} \backslash   (X_{G\backslash 1} \cup X_{G \q 1} )) & \hbox{ vanishesÊ} \\
\hbox{ then }   & \Big[ {\Omega_{N-3} \over (\Psi_G^{1,2})^2 }\Big]    \in   \gr_F^{N-4}   H_{dR}^{N-3} (\PP^{N-3} \backslash V(\Psi_G^{1,2}))  & \hbox{ vanishes} \nonumber 
  \end{eqnarray}
 Now we apply proposition \ref{propdegen} to the hypersurface $V(\Psi_G^{1,2}) \subset \PP^{N-3}$. 
 By contraction-deletion, $\Psi_G^{1,2} = \Psi_G^{13,23} \alpha_3 + \Psi^{1,2}_{G,3}$, so the 
  smoothness assumption holds by corollary  \ref{corsmooth}.  Furthermore, we have 
  \begin{eqnarray}  \gr(i) H_{prim}^n(\cV(\Psi^{13,23}_{G})) & = &0 \quad  \hbox{ for  } \quad  0\leq i \leq 1 \ ,   \nonumber   \\
  \gr(i) H_{prim}^n(\cV(\Psi^{1,2}_{G,3 }))  & = &0 \quad  \hbox{ for  }  \quad i=0  \ , \nonumber 
\end{eqnarray} 
The first line   follows from the identity $\Psi_G^{13,23}=\Psi_{G\backslash 1, 3 \q 2}$ (see the comments following $(\ref{b9} )$)  and  equation $(\ref{PsiGVanishestoorder2})$;
the second line follows from the Chevalley-Warning theorem \ref{t20}, since $\deg \, ( \Psi^{1,2}_{G,3}) = h_{G} -1 \leq N_G -4$. 
  This gives \begin{equation} \label{drfp3}
\gr_F^{N-4} H_{dR}^{N-3} (\PP^{N-3} \backslash V(\Psi_G^{1,2}))  \cong   \gr_F^{N-4} H_{dR}^{N-4}(\PP^{N-4} \backslash \cV(\Psi_G^{13,23}, \Psi^{1,2}_{G,3}))   \qquad 
\end{equation}
and proposition \ref{propdegen} implies that 
$$ \Big[ {\Omega_{N-3} \over (\Psi_G^{1,2})^2 }\Big] =0   \hbox{    in LHS  if and only if }  \Big[ {\Omega_{N-4} \over \Psi^{13,23}_G \Psi^{1,2}_{G,3} }\Big] =0 \hbox{ in RHS}\ . 
$$
The result follows on combining $(\ref{drfp1})$,  $(\ref{drfp2})$, and $(\ref{drfp3})$.
\end{proof}
Suppose that $G$ satisfies the conditions of the previous proposition, and let $D_4,\ldots, D_k$ be a sequence
of denominators obtained by reducing out  the edges $1,2, \ldots, k$, where $1,2,3$ form a 3-valent vertex.

\begin{theorem} 
If the cohomology class
$$\Big[ {\Omega_{N-1} \over \Psi_G^2 } \Big]  \in \gr_F^{N- 3} H_{dR}^{N-1}(\PP^{N-1} \backslash X_G) $$
vanishes, then so does
$$ \Big[{ \Omega_{N-k-1} \over  D_k} \Big] \in 
   \gr_F^{N- k-1}   H_{dR}^{N-k-1}(\PP^{N-k-1} \backslash \cV(D_k)) $$
   \end{theorem}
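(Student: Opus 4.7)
The plan is to argue by induction on $k \geq 3$, with the base case $k = 3$ furnished by the preceding proposition: its conclusion is exactly the vanishing of $[\Omega_{N-4}/D_3]$, since the denominator reduction identifies $D_3 = \Psi_G^{13,23} \Psi_{G,3}^{1,2}$ (two weight drops at $m = 1, 3$ and one generic step at $m = 2$). For the inductive step from $k$ to $k+1$, the shape of the $(k+1)$-st reduction dictates which tool to use.

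In the generic case, we have $D_k = fg$ with $f = f^{k+1} \alpha_{k+1} + f_{k+1}$, $g = g^{k+1} \alpha_{k+1} + g_{k+1}$ and nonzero resultant, and $D_{k+1} = \pm(f^{k+1}g_{k+1} - f_{k+1}g^{k+1})$. Because $\cV(D_k) = \cV(f) \cup \cV(g)$, the induction hypothesis $[\Omega_{N-k-1}/(fg)] = 0$ in $\gr_F^{N-k-1}$ is exactly the hypothesis of proposition \ref{propgeneric} applied in ambient dimension $N-k$, and the proposition's conclusion is the vanishing of $[\Omega_{N-k-2}/D_{k+1}]$ in $\gr_F^{N-k-2} H^{N-k-2}_{dR}(\PP^{N-k-2} \setminus \cV(D_{k+1}))$. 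This step is essentially free of additional hypotheses, relying only on the formal Mayer-Vietoris and Hodge-bound argument in the proof of proposition \ref{propgeneric}.

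In the weight-drop case, $D_k = \Psi^2$ with $\Psi = f^{k+1}\alpha_{k+1} + f_{k+1}$ and $D_{k+1} = f^{k+1} f_{k+1}$. Since $\cV(D_k) = \cV(\Psi)$ as reduced schemes, one applies proposition \ref{propdegen} in ambient dimension $N-k$ with $X = \cV(\Psi)$, $X^e = \cV(f^{k+1})$, $X_e = \cV(f_{k+1})$. Its conclusion supplies a map sending $[\Omega_{N-k-2}/(f^{k+1} f_{k+1})]$ downstairs to $[\Omega_{N-k-1}/\Psi^2]$ upstairs which is an isomorphism on the appropriate graded piece, so that the inductive vanishing upstairs transports to the required vanishing of $[\Omega_{N-k-2}/D_{k+1}]$ downstairs.

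The main obstacle is verifying the hypotheses of proposition \ref{propdegen} at arbitrary depth $k+1 \geq 4$: the smoothness of $\cV(f^{k+1}) \setminus (\cV(f^{k+1}) \cap \cV(f_{k+1}))$ and the primitive-cohomology vanishings $(\ref{propdegenvanishcond})$ for $\cV(f^{k+1})$ and $\cV(f_{k+1})$ up to the required range. The factors $f^{k+1}, f_{k+1}$ coming from an iterated weight-drop square still admit Dodgson-type identities, so the smoothness should follow by paralleling the arguments of proposition \ref{propsmooth} and corollary \ref{corsmooth2}; the cohomological vanishing should follow by combining the degree bound $\deg(f^{k+1}), \deg(f_{k+1}) \leq N-k-1$ with the Chevalley-Warning theorems \ref{t20}, \ref{t24}, possibly augmented by an iterative reapplication of the cohomological denominator-reduction theorem \ref{thmcohomDR} to the factors. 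For graphs without weight drop, the weight-drop case of the induction never occurs after $k = 3$, and the proof reduces to iterating proposition \ref{propgeneric} alone; this is in fact the only case needed for the intended applications to denominator-reducible graphs.
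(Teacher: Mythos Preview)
Your proposal is correct and follows the same route as the paper. The paper's proof is a single sentence: invoke the preceding proposition for the first three reductions, then iterate proposition~\ref{propgeneric} for each subsequent edge --- precisely your base case plus your generic inductive step.

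Your additional discussion of the weight-drop case is more than the paper provides: the paper's proof only invokes the generic reduction step (proposition~\ref{propgeneric}) and makes no mention of proposition~\ref{propdegen} past $k=3$. In effect the paper treats the theorem as concerning sequences $D_3,\ldots,D_k$ built from generic steps (which covers the stated applications --- the non-weight-drop corollary and the $G_8$ computation, where no supplementary weight drop occurs). Your observation that a weight-drop step at depth $\geq 4$ would require verifying smoothness and the vanishing hypotheses~$(\ref{propdegenvanishcond})$ for the intermediate factors is a genuine point, but the paper does not address it either; your final paragraph correctly isolates the only case actually used.
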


\begin{proof}
The theorem follows immediately from the previous proposition to perform the first 3 reductions, followed by successive application of the
generic reduction step for differential forms (proposition \ref{propgeneric}).
\end{proof}

\begin{corollary} Let $G$ be as above,  and suppose that $G$ is denominator reducible (and non-weight drop). Then the vector-space
\begin{equation}Ê\gr_F^{N- 3} H_{dR}^{N-1}(\PP^{N-1} \backslash X_G) \label{grfcoreq}
\end{equation} 
is one-dimensional, spanned by the class of the Feynman differential form
$$\Big[ {\Omega_{N-1} \over \Psi_G^2 } \Big]\ .$$
\end{corollary}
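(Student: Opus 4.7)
The plan is to combine the theorem just proved, which tracks the class $[\omega_G]$ through successive denominator reductions, with a Hodge-filtration analogue of Theorem \ref{thmcohomDR}, simultaneously producing the dimension count and identifying the spanning class. The guiding observation is that since $G$ is denominator reducible and has no weight-drop, the reduction can be pushed all the way to $k = N_G - 2$, where $D_{N_G - 2}$ is a bi-degree $(1,1)$ polynomial in two variables whose zero locus $\cV(D_{N_G - 2}) \subset \PP^1$ consists of two \emph{distinct} points. Thus $\PP^1 \setminus \cV(D_{N_G - 2}) \cong \GG_m$, whose first de Rham cohomology $H^1_{dR}(\GG_m) \cong \QQ(-1)$ is one-dimensional and lives entirely in $\gr_F^1$.

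For the nonvanishing at the top, a direct partial-fractions computation shows that $[\Omega_1/D_{N_G - 2}]$ is a nontrivial $\QQ$-linear combination of the two standard logarithmic differentials on $\GG_m$ (one per factor of $D_{N_G-2}$), hence a generator of $\gr_F^1 H^1_{dR}(\PP^1 \setminus \cV(D_{N_G-2}))$. Applying the contrapositive of the theorem just proved, iteratively for $k = 3,4,\ldots,N_G-2$, one concludes that $[\omega_G]$ is nonzero in $\gr_F^{N_G-3} H^{N_G-1}_{dR}(\PP^{N_G-1} \setminus X_G)$.

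For the upper bound on the dimension, I would invoke the Hodge-filtration version of the cohomological denominator reduction (the $\gr_F$-refinement of Theorem \ref{thmcohomDR} alluded to in the introduction), which iteratively produces an isomorphism
\[
\gr_F^{N_G-3} H^{N_G-1}_{dR}(\PP^{N_G-1} \setminus X_G) \;\cong\; \gr_F^{1} H^{1}_{dR}\bigl(\PP^1 \setminus \cV(D_{N_G-2})\bigr).
\]
The right-hand side is one-dimensional by the preceding step, so the left-hand side is one-dimensional as well, and is therefore spanned by the nonzero class $[\omega_G]$.

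The hard part will be establishing this Hodge-filtration isomorphism rigorously. Proposition \ref{propdegen} already supplies an honest isomorphism on $\gr_F^\bullet$ under the stated vanishing hypotheses, but Propositions \ref{propgeneric} and \ref{propsecondDRstep} are formulated only as one-way implications concerning the distinguished class $[\omega_G]$. Upgrading these to genuine isomorphisms of graded pieces requires reinspecting the Mayer--Vietoris and Gysin-residue arguments in their proofs and verifying that the pertinent comparison maps are in fact isomorphisms on $\gr_F^\bullet$, which reduces to checking the vanishing of certain kernel cohomologies using Chevalley--Warning (Theorem \ref{t20}) together with the smoothness results of Corollaries \ref{corsmooth} and \ref{corsmooth2}.
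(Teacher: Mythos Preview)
Your nonvanishing argument is correct and matches the paper exactly: apply the contrapositive of the preceding theorem down to $k=N-2$, and observe that after a change of coordinates $\Omega_1/D_{N-2} = (x\,dy - y\,dx)/(xy)$ is manifestly nonzero in $\gr_F^1 H^1_{dR}(\PP^1\backslash\{0,\infty\})$.

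For the one-dimensionality, however, you are proposing unnecessary work. The paper does \emph{not} upgrade Propositions~\ref{propgeneric} and~\ref{propsecondDRstep} to isomorphisms on graded pieces. Instead it simply invokes Corollary~\ref{corwd}, which already establishes that $\gr(2) H^{N-2}_{prim}(X_G)\cong\QQ(-2)$ is one-dimensional; recall that the notation $\gr(i)$ in this paper covers $\gr_F^i$ as well as $\gr^W$, so no separate ``$\gr_F$-refinement'' is needed. One then passes to the complement by localization and Poincar\'e duality: $H^{N-2}_{prim}(X_G)\cong H^{N-1}_c(\PP^{N-1}\backslash X_G)$, and duality on the smooth open complement identifies $\gr_F^2$ of this with the dual of $\gr_F^{N-3} H^{N-1}_{dR}(\PP^{N-1}\backslash X_G)$. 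This is what the paper means by ``yields the one-dimensionality of~(\ref{grfcoreq}) by localization.''

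So your ``hard part'' --- promoting the one-way form-tracking implications to honest isomorphisms --- can be entirely bypassed. It is not even clear that such an upgrade is routine: in Proposition~\ref{propgeneric} the identity~(\ref{formidentity1}) is established only for the specific form $\Omega_{N-1}/(fg)$, and extending it to arbitrary classes in the graded piece would require additional argument. The paper's route, separating the dimension count (via the hypersurface side) from the nonvanishing (via the form-tracking theorem), avoids this difficulty altogether.
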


\begin{proof}
Compare the proof of corollary \ref{corwd}, which yields the  one-dimensionality  of $(\ref{grfcoreq})$ by localization.
It is enough to show, by the previous theorem, that the final stage of the denominator reduction is non-zero. 
After a suitable change of coordinates,
$ {\Omega_{1} \over D_{N-2} }  =  {xdy-ydx \over xy}$ which is clearly non-zero in  $\gr^1_F H_{dR}^1(\PP^1 \backslash \{0, \infty\})$.
\end{proof}

\begin{remark} \label{remchangefunctor} All the results of this section are also valid for the weight filtration, if we replace
$\gr^k_F$ with $\gr^W_{2k}$ throughout. One can also replace $\gr^p_F$ with the  bigraded functor $\gr^{p,q}$, and the proofs are clearly unchanged, since we only require exactness of the functor
and vanishing results  for the Hodge numbers $(\ref{Xhodgenos})$.
\end{remark}
\section {On the eight-loop counter-example}
In the previous sections we computed the first graded piece
$\gr_{min}^W H^{N_G-2}(X_G)$ for any denominator-reducible graph $G$.  In this section we study the corresponding problem for a non denominator-reducible graph.
The first non-trivial counter-example  to
Kontsevich's conjecture on the number of rational points of graph
hypersurfaces was  given  explicitly in \cite{BrSch}. It is the primitive  overall log-divergent graph  $G_8$  with 9
vertices $1,\ldots,9$ and edges 
\begin{equation}
34,14,13,12,27,25,58,78,89,59,49,47,35,36,67,69,
\end{equation}
where $ij$ denotes an edge connecting vertices $i$ and $j$. This graph is 
depicted in  Figure 8 of \cite{BrSch}. It was proved that the point-counting function for this graph is given by a modular form, but it remains to show that the non-Tate
part of the cohomology occurs in middle degree. 

First of all we show that  
$$\gr^{2,4} H^{N_G-2}(X_{G_8})\neq 0\ ,$$
where $\gr^{2,4}$ denotes the part of Hodge type $(2,4$), which clearly  implies that  $H^{N_G-2}(X_{G_8})$ is not mixed Tate. Then, we shall compute the framing given by the Feynman differential form in the de Rham
cohomology of $\PP^{N_G-2} \backslash X_{G_8}$.

\subsection{Non-Tate cohomology}
The denominator reduction of the graph $G_8$ was computed in lemmas 55 and 56 in \cite{BrSch}. With the above ordering of the edges,
the denominators $D_0, \ldots, D_{11}$ are defined, and $D_{11}$ is given explicitly by 
\begin{equation}\label{c20}
  \pm D_{11}=
   \Psi^{15,78}_{A\backslash 11}\Psi_{B\q 11}
   -\Psi^{15,78}_{A\q11}\Psi_{B\backslash 11}
\end{equation}
where $A,B$ are the minors of $G_8$ defined by
 $$A=G_8\backslash\{2,3,5,10\}\q \{4,6,9\} \hbox{  and  } B=G_8\backslash\{2,3,5,7,8\}\q\{1,4,6,9,10\}. $$
  By  theorem \ref{thmcohomDR}, we immediately deduce that:

\begin{proposition} \label{propG_8D11}
For the graph $G_8$  consider
$\cV(D_{11})\subset\PP^4$. One gets
\begin{equation}\label{c21}
    gr^i_F H^{14}(X_{G_8})
    \cong
    gr^{i-2}_F H^{3}(\cV(D_{11})),\;\;
    i\leq 2.
\end{equation}
\end{proposition}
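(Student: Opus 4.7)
The plan is to invoke Theorem \ref{thmcohomDR} for $G_8$ with $k = 11$. First I would check that $G_8$ satisfies the hypotheses of that theorem: it is a connected primitively log-divergent graph with $h_{G_8} = 8$ loops and $N_{G_8} = 16$ edges, so $N_{G_8} = 2 h_{G_8} \geq 5$. Next, for the ordering of the edges specified above, Lemmas 55 and 56 of \cite{BrSch} compute the denominator reduction explicitly up to stage $11$ and produce the formula $(\ref{c20})$, which in particular shows that $D_{11}$ is defined and non-zero; hence $k = 11 \geq 3$ is an admissible index in Theorem \ref{thmcohomDR}.

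Applying the theorem with $n = 14$ and $k = 11$ then yields
$$\gr(2)\, H^{14}_{prim}(X_{G_8}) \;\cong\; \gr(0)\, H^{3}_{prim}(\cV(D_{11})).$$
In terms of the Hodge filtration this reads $\gr^2_F H^{14}_{prim}(X_{G_8}) \cong \gr^0_F H^3_{prim}(\cV(D_{11}))$, which is the case $i = 2$ of the claim. For $i = 0, 1$ the same theorem gives $\gr^i_F H^{14}_{prim}(X_{G_8}) = 0$, while $\gr^{i-2}_F H^3_{prim}(\cV(D_{11}))$ vanishes trivially because $F^{<0} = 0$.

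Finally I would replace primitive by absolute cohomology on both sides. On the right, $H^3(\PP^4) = 0$, so $H^3 = H^3_{prim}$ for $\cV(D_{11}) \subset \PP^4$. On the left, the difference between $H^{14}(X_{G_8})$ and $H^{14}_{prim}(X_{G_8})$ is a subquotient of the image of $H^{14}(\PP^{15}) = \QQ(-7)$, which is pure of Hodge type $(7,7)$ and therefore contributes to $\gr^i_F$ only at $i = 7$; in particular nothing is lost in $\gr^i_F$ for $i \leq 2$.

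There is no real difficulty here: the proposition is essentially an immediate corollary of Theorem \ref{thmcohomDR} once one has the explicit denominator reduction from \cite{BrSch}. The only items to verify are the combinatorial hypotheses on $G_8$, the admissibility of $k = 11$, and the two elementary remarks above about passing from primitive to absolute cohomology at the relevant Hodge degrees.
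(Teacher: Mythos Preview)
Your proposal is correct and matches the paper's own argument, which consists of the single sentence ``By theorem \ref{thmcohomDR}, we immediately deduce that:'' preceding the proposition. You have simply spelled out the verification of the hypotheses and the passage from primitive to absolute cohomology, both of which are routine and exactly as you describe.
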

\noindent
The subvariety $\cV(D_{11},\alpha_{16})$ is again reducible
in the sense of Proposition \ref{p5}.
\begin{lemma}\label{l20}
One has $\gr^{0,2} H^{m}(\cV(D_{11},\alpha_{16}))=0$ for
any $m$.
\end{lemma}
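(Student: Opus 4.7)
The plan is to apply the cohomological denominator reduction of Section~3 one more step and conclude by Chevalley--Warning. By Remark~\ref{remchangefunctor}, Proposition~\ref{p5} and the subsequent factorization lemma remain valid when $\gr(0)$ is replaced by the bigraded functor $\gr^{p,q}$. Since $H^*(\PP^3)$ is pure Tate, $\gr^{0,2}H^m$ coincides with $\gr^{0,2}H^m_{prim}$, so I may work with primitive cohomology of $\cV(D_{11},\alpha_{16}) \cong \cV(D_{11}|_{\alpha_{16}=0}) \subset \PP^3$.

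I would then use the reducibility hinted at immediately before the lemma: $D_{11}|_{\alpha_{16}=0}$ admits a presentation of the form required by Proposition~\ref{p5}, i.e., either a resultant $[f,g]_{\alpha_j} = f^1 g_1 - f_1 g^1$ or a product $f \cdot g$, with $f,g$ linear in some $\alpha_j \in \{\alpha_{12},\alpha_{13},\alpha_{14},\alpha_{15}\}$. This must be extracted from the explicit formula~$(\ref{c20})$ by contraction--deletion on the Dodgson polynomials of the specific minors $A$ and $B$ of $G_8$. The hypothesis of Proposition~\ref{p5} in bigraded form, namely $\gr^{0,2}H^n_{prim}(\cV(f^1,g^1))=0$, should follow from Chevalley--Warning (Theorem~\ref{t24}) applied to the complete intersection $\cV(f^1,g^1)\subset \PP^2$, once the expected degree bound is verified. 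Proposition~\ref{p5} (or, in the factored case, identity~$(\ref{b51})$) then yields
$$\gr^{0,2}H^m_{prim}(\cV(D_{11}|_{\alpha_{16}=0})) \cong \gr^{0,2}H^{m-\epsilon}_{prim}(\cV(f,g)), \qquad \epsilon\in\{0,1\},$$
with $\cV(f,g)\subset\PP^3$ a complete intersection of two lower--degree hypersurfaces.

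To conclude I would apply Theorem~\ref{t24} to $\cV(f,g)\subset\PP^3$: once $\deg f + \deg g \leq 4$ is verified, this gives $\gr(0)H^n_{prim}(\cV(f,g))=0$ and hence $\gr^{0,2}=0$, as required. Since naive degree bookkeeping (recall $\deg D_{11}=5$) only gives $\deg f + \deg g \leq 6$, one may need either to iterate the reduction a further step, or, if $D_{11}|_{\alpha_{16}=0}$ is already reducible as a polynomial, to split it via Mayer--Vietoris into its irreducible components and treat each component of strictly smaller degree separately. The main obstacle is precisely this concrete identification: extracting from $(\ref{c20})$ the explicit presentation of $D_{11}|_{\alpha_{16}=0}$ via Dodgson manipulations on the particular minors $A = G_8\backslash\{2,3,5,10\}\q\{4,6,9\}$ and $B = G_8\backslash\{2,3,5,7,8\}\q\{1,4,6,9,10\}$, and checking that after this reduction the effective degrees indeed fall within the Chevalley--Warning range.
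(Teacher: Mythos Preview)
Your plan is the right one and matches the paper's argument; what remains is exactly the explicit computation you flag as the obstacle. The paper computes directly from $(\ref{c20})$ that
\[
D_{11}\big|_{\alpha_{16}=0} = -\,\alpha_{14}\,\alpha_{15}\,P,\qquad
P = \alpha_{12}(\alpha_{12}+\alpha_{13}+\alpha_{15})\alpha_{14}+\alpha_{13}(\alpha_{15}+\alpha_{12})(\alpha_{12}+\alpha_{13}),
\]
and then applies Proposition~\ref{p5} (in the form of the factorization step $(\ref{b51})$ / $(\ref{DenomGenericStepVersion2})$) \emph{twice}, with respect to $\alpha_{14}$ and $\alpha_{15}$, obtaining
\[
\gr^{0}_F H^m_{prim}(\cV(D_{11}|_{\alpha_{16}=0}))\;\cong\;\gr^{0}_F H^{m-2}_{prim}\big(\cV(P,\alpha_{14},\alpha_{15})\big).
\]
Since $P|_{\alpha_{14}=\alpha_{15}=0}=\alpha_{12}\alpha_{13}(\alpha_{12}+\alpha_{13})$, the right-hand side is three points in $\PP^1$, hence pure Tate, and $\gr^{0,2}$ vanishes.

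Two small corrections to your endgame. First, the bound in Theorem~\ref{t24} for $\cV(f,g)\subset\PP^3$ is $\deg f+\deg g\le 3$, not $4$; since $\deg D_{11}|_{\alpha_{16}=0}=5$, any factorization into two pieces gives degree sum $5$, so Chevalley--Warning alone cannot close the argument after a single step. Second, even after two reductions one lands on a degree-$3$ locus in $\PP^1$, which again falls outside the Chevalley--Warning range; the paper concludes instead by the trivial observation that a finite set of points is Tate. So your anticipated alternative (``iterate the reduction a further step'') is exactly what is needed, and the conclusion is by direct inspection rather than Theorem~\ref{t24}.
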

\begin{proof}
After setting $\alpha_{16}=0$ into (\ref{c20}) we obtain
\begin{equation}\label{c22}
    D_{11}|_{\alpha_{16}=0}=-\alpha_{14}\alpha_{15}P
\end{equation}
where 
$P  = \alpha_{12}(\alpha_{12}+\alpha_{13}+\alpha_{15}) \alpha_{14}+\alpha_{13}(\alpha_{15}+\alpha_{12})(\alpha_{12}+\alpha_{13})$, which is  of degree $1$ in 
$\alpha_{14}$ and $\alpha_{15}$.
Using Proposition \ref{p5} twice with respect to $\alpha_{14}, \alpha_{15}$, we get
\begin{equation}
\gr^0_F H^m_{prim}(\cV(D_{11},\alpha_{16}))\cong \gr^0_F
H^{m-2}_{prim}(\cV(P, \alpha_{14}, \alpha_{15})))
\end{equation}
for all $m$. By inspection,  $\cV(P, \alpha_{14}, \alpha_{15}))$ is a union of three points in $\PP^1$ and therefore of Tate type.  Thus 
$\gr^{0,2} H^m(\cV(D_{11}, \alpha_{16}))$ vanishes for all $m$.
\end{proof}
 The localization sequence for $\cV(D_{11},\alpha_{16})\subset\cV(D_{11})$ reads
\begin{multline}
  \rightarrow H^{2}_{prim}(\cV(D_{11},\alpha_{16}))\rightarrow
  H^{3}_c(\cV(D_{11})\backslash\cV(D_{11},\alpha_{16}))\rightarrow\\
  H^{3}_{prim}(\cV(D_{11}))\rightarrow
  H^{3}_{prim}(\cV(D_{11},\alpha_{16}))\rightarrow 
\end{multline}
After applying  the functor $\gr^{0,2}$ to this sequence, the above lemma gives
\begin{equation}\label{c31}
   \gr^{0,2} H^{3}_{prim}(D_{11})\cong
  \gr^{0,2} H^{3}_c(D_{11}|_{\alpha_{16}\neq 0}).
\end{equation}

Let $\widehat{D}$ be $D^{11}_{G_8}|_{\alpha_{16}=1}$, and consider the corresponding affine scheme
 $\cV(\widehat{D}) \subset  \AAA^4 $, where $ \AAA^4\subset\PP^4$ is the open subset   $\alpha_{16}\neq 0$. Then $
H_c^{m}(\cV(D_{11})\backslash\cV(D_{11},\alpha_{16}))$ is equal to 
$H_c^{m}(\cV(\widehat{D}))$, for any $m$. Following the strategy in
\cite{BrSch}, we scale $\alpha_{12}$ and $\alpha_{13}$ by the polynomial
$\Psi_{\gamma} =\alpha_{14} \alpha_{15} +\alpha_{14} \alpha_{16} +\alpha_{15} \alpha_{16} $. This  transforms the polynomial $\widehat{D}$  into another  polynomial
called  $\widetilde{D}$ in \cite{BrSch}. 
This change of variables is an isomorphism  on the complement of $\cV(\Psi_{\gamma})$, giving
\begin{equation}\label{c34}
H^3_c(\cV(\widehat{D})\backslash \cV(\widehat{D},\Psi_{\gamma})) \cong
H^3_c(\cV(\widetilde{D})\backslash \cV(\widetilde{D},\Psi_{\gamma}))
\end{equation}
The left and right parts fit into the two exact sequences
\begin{equation} \nonumber
\rightarrow H^2_c(\cV(\widehat{D},\Psi_{\gamma}))\rightarrow
H^{3}_c(\cV(\widehat{D})\backslash \cV(\widehat{D},\Psi_{\gamma}))
\rightarrow H^3_c(\cV(\widehat{D}))\rightarrow
H^3_c(\cV(\widehat{D},\Psi_{\gamma}))\rightarrow
\end{equation}
\begin{equation} \nonumber
\rightarrow H^{2}_c(\cV(\widetilde{D})) \rightarrow
H^{2}_c(\cV(\widetilde{D},\Psi_{\gamma}))\rightarrow
H^3_c(\cV(\widetilde{D})\backslash \cV(\widetilde{D},\Psi_{\gamma}))
\rightarrow H^ 3_c(\cV(\widetilde{D}))\rightarrow
\end{equation}
\begin{lemma}\label{l21}
For $\cV(\widetilde{D})$ and $\cV(\widehat{D},\Psi_{\gamma})$ we have
$\gr^{0,2} H^m_c=0$ for all $m$.
\end{lemma}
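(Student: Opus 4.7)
The plan is to treat each of the two varieties separately, reducing in each case the claimed $\gr^{0,2}$-vanishing on compactly supported cohomology to an application of the cohomological Chevalley-Warning theorems \ref{t20} and \ref{t24}, via a localization sequence at the hyperplane $\alpha_{16}=0$ together, if necessary, with one further step of denominator reduction.

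For $\cV(\widetilde D)$, let $\overline X\subset \PP^4$ denote its projective closure with respect to the homogenizing variable $\alpha_{16}$, and set $Z=\overline X\cap \cV(\alpha_{16})$. The long exact sequence
$$\cdots \to H^{m-1}(Z)\to H^m_c(\cV(\widetilde D))\to H^m(\overline X)\to H^m(Z)\to\cdots$$
combined with the exactness of $\gr^{0,2}$ reduces the claim to showing $\gr^{0,2}H^*_{prim}(\overline X)=0$ and $\gr^{0,2}H^*(Z)=0$. Both $\overline X$ and $Z$ are projective hypersurfaces, and from the explicit form of $\widetilde D$ computed in \cite{BrSch}---obtained from $\widehat D$ by rescaling $\alpha_{12},\alpha_{13}$ by $\Psi_{\gamma}$---the degrees are bounded by the ambient dimensions, so Theorem \ref{t20} applies directly. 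Should the Chevalley-Warning bound fail narrowly for $\overline X$, I would first perform an additional generic denominator-reduction step (Proposition \ref{propDenomRed}) with respect to a variable in which $\widetilde D$ is linear in each factor, landing in $\PP^3$ where the degree condition is met.

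For $\cV(\widehat D, \Psi_{\gamma})$, the same localization framework applies, this time to the intersection of the projective hypersurface $\cV(\widehat D^{proj})\subset \PP^4$ with the quadric $\cV(\Psi_{\gamma}^{proj})$. Since $\Psi_{\gamma}=\alpha_{14}\alpha_{15}+\alpha_{14}\alpha_{16}+\alpha_{15}\alpha_{16}$ is the graph polynomial of a triangle, it is linear separately in each of $\alpha_{14},\alpha_{15},\alpha_{16}$. The complete-intersection Chevalley-Warning theorem \ref{t24} then yields the required vanishing provided the sum of degrees is at most $4$; if not, a single application of Proposition \ref{p5} with respect to (say) $\alpha_{14}$ produces the explicit resultant $[\widehat D^{proj},\Psi_{\gamma}^{proj}]_{\alpha_{14}}$, of strictly smaller degree, and the $\gr^{0,2}$-vanishing for its zero locus is another Chevalley-Warning check in $\PP^3$. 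The hyperplane section at $\alpha_{16}=0$ is handled identically, one dimension lower.

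The main obstacle is the careful tracking of degrees, in particular for $\widetilde D$, whose rescaling by $\Psi_{\gamma}$ can \emph{a priori} inflate the degree; checking that the degree bound in fact remains within the Chevalley-Warning regime uses the explicit formula from \cite{BrSch}. In each appeal to Proposition \ref{p5} one must also verify the auxiliary hypothesis $(\ref{p5assumption})$, but this reduces to yet another Chevalley-Warning application on a low-degree subsidiary hypersurface. Structurally, the argument succeeds because even after eleven steps of denominator reduction the polynomial $D_{11}$ of $G_8$ remains small enough in degree for the Chevalley-Warning apparatus to reach.
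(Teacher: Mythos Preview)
Your overall framework---pass to the projective closure via a localization sequence at $\alpha_{16}=0$, then try to reduce by Chevalley--Warning or Proposition~\ref{p5}---is exactly the paper's strategy. The difficulty is that your concrete fallback steps do not actually apply as stated.

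For $\cV(\widehat D,\Psi_\gamma)$: your single Proposition~\ref{p5} step in $\alpha_{14}$ requires \emph{both} defining polynomials to be linear in that variable, but $\widehat D$ (equivalently $D_{11}$) is not; already from $(\ref{c22})$ one sees that $D_{11}|_{\alpha_{16}=0}=-\alpha_{14}\alpha_{15}P$ with $P$ itself linear in $\alpha_{14}$, so $D_{11}$ has $\alpha_{14}$-degree $2$. The complete-intersection bound of Theorem~\ref{t24} also fails (the sum of degrees is $5+2>4$). What the paper uses instead is a structural fact from \cite{BrSch}: the intersection $\cV(\widehat D,\Psi_\gamma)$ breaks up as a union of intersections of hypersurfaces of degree $\le 2$, each linear in every variable, and it is to these pieces that Proposition~\ref{p5} is applied (via Mayer--Vietoris). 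You are missing this decomposition.

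For $\cV(\widetilde D)$: the direct Chevalley--Warning hope fails (the closure sits in $\PP^4$ with degree $>4$), and your fallback to Proposition~\ref{propDenomRed} presupposes a factorization of $\widetilde D$ which is not claimed anywhere. The relevant input, again from \cite{BrSch}, is that $\widetilde D$ is of degree one in $\alpha_{14}$ and in $\alpha_{15}$; one then eliminates these variables by the cone/linear-projection argument (as in $(\ref{b35})$--$(\ref{b35.5})$ or Proposition~\ref{p5}), not by Proposition~\ref{propDenomRed}. With these corrections your plan coincides with the paper's proof.
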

\begin{proof}
 Let $Z$ be one these two  (affine) varieties.  Let
$\bar{Z}$ be its compactification obtained by homogenizing with respect to $\alpha_{16}$. An exact sequence of the form
\begin{equation}\label{c37}
\rightarrow H^m(\bar{Z}\cap\cV(\alpha_{16}))\rightarrow
H^m_c(Z)\rightarrow H^m(\bar{Z})\rightarrow
H^m(\bar{Z}\cap\cV(\alpha_{16}))\rightarrow,
\end{equation}
shows that it is enough to prove the statement of the lemma for 
$\bar{Z}$ and $\bar{Z} \cap \cV(\alpha_{16})$. 
As explained in \cite{BrSch},  lemma 59,  for the case $Z=\cV(\widetilde{D})$, $\widetilde{D}$ is of degree one in
$\alpha_{14}$ and $\alpha_{15}$, while in the second case
$Z=\cV(\widehat{D},\Psi_{\gamma})$ is a union of intersections of
hypersurfaces of degree at most 2 which are linear in every variable. Both
cases can be treated by Proposition \ref{p5}. The defining
equations of $Z\cap\cV(\alpha_{16})$ are even easier. In all cases the cohomology
$H^m$ has  no $\gr^{0,2}$ pieces. \end{proof}
By (\ref{c34}), Lemma \ref{l21}, and the two sequences preceding it,
we have
\begin{equation}\label{c42}
\gr^{0,2} H^3_c(\cV(\widehat{D}))\cong \gr^{0,2}
H_c^{2}(\cV(\widetilde{D}, \Psi_{\gamma}))\ .
\end{equation}
Since $\widetilde{D}$ is linear in $\alpha_{14}$, we can apply Proposition
\ref{p5} one more time to the pair $(\widetilde{D}, \Psi_{\gamma})$ with respect to $\alpha_{14}$, yielding:
\begin{equation}\label{c43}
\gr^{0,2}
H_c^{2}(\cV(\Psi_{\gamma},\widetilde{D}))\cong\gr^{0,2} H_c^2(\cV(P))\ ,
\end{equation}
where $P$ is the resultant:
\begin{multline}\label{c43.5}
P=\alpha_{12}+\alpha_{12}\alpha_{15}+\alpha_{13}\alpha^2_{12}+\alpha_{12}^2+\alpha_{13}\alpha_{12}+
\alpha_{15}\alpha_{13}\alpha_{12}\\
+\alpha_{13}^2\alpha_{15}+\alpha_{13}^2\alpha_{15}^2+\alpha_{13}^2\alpha_{15}\alpha_{12}+
\alpha_{15}^2\alpha_{13}\alpha_{12}.
\end{multline}
We introduce another change of variables
$\alpha_{13}\mapsto\alpha_{13}/\alpha_{15}+1$ and define $Q$ to be
the image of $P$ under this transformation. This can be handled in the same way as
above, giving an isomorphism
\begin{equation}\label{c44}
\gr^{0,2} H^2_c(P) \cong \gr^{0,2} H^2_c(Q).
\end{equation}
Now set $a=\alpha_{13}+1$, $b=\alpha_{12}+1$, $c=\alpha_{15}$. Then
$Q$ takes the form
\begin{equation}\label{c45}
J=a^2bc-ab-ac^2-ac+b^2c+ab^2+abc^2-abc,
\end{equation}
defining a singular surface in $\AAA^3$. Proposition \ref{propG_8D11} together with  (\ref{c42})
(\ref{c43}), (\ref{c44}), yields
\begin{equation}\label{c47}
    \gr^{2,4} H^{14}(X_{G_8})\cong \gr^{0,2} H^2_c(\cV(J)).
\end{equation}
Now let $T$ be the homogeneous polynomial
\begin{equation}\label{c48}
   T=b(a+c)(ac+bd)-ad(b+c)(c+d)
\end{equation}
satisfying $T|_{d=1}=J$. The complement $\cV(T|_{d=0})$ is a union of
lines, so a localization sequence implies that
\begin{equation}\label{c49}
\gr^{0,2} H^2(\cV(T)) \cong \gr^{0,2} H_c^2(J).
\end{equation}
By \cite{BrSch}, \S7, $\cV(T)$ has six singular points. Blowing them up defines a K3 surface $Y$.
Since the Hodge numbers of a K3 satisfy $h^{0,2}=h^{2,0}=1$, and
since  blowing-up points only  adds  extra summands of  Tate type, we conclude that
$\gr^{0,2} H^2(Y)\cong \gr^{0,2}
H^2(\cV(T))$ is one-dimensional. By (\ref{c47}) and (\ref{c49}),  $\gr^{2,4} H^{14}(X_G)$ is one-dimensional.

\subsection{The differential form in the 8-loop counter-example}
We now wish to chase the Feynman differential form in this example in order to prove that it defines a non-zero framing which is not of Tate type.

We start with the Feynman form 
\begin{equation}
\omega_{G_8}=\frac{\Omega_{15}}{\Psi_{G_8}^2}
\end{equation}
and consider its class in $H^{15}_{dR}(\PP^{15}\backslash X_{G_8})$. By 
remark \ref{remchangefunctor}, we can apply the functor $\gr^{p,p-2}$ throughout the argument of \S \ref{sectRedForms} instead of the functor $\gr_F^p$. By the general denominator reduction for differential forms,
we immediately obtain that
$$ [\omega_{G_8}] \in \gr^{13,11} H^{15}_{dR}(\PP^{15}\backslash X_{G_8}) \hbox{  vanishes}$$
 \begin{equation}\label{c63}
\hbox{ implies that }  \Big[\frac{\Omega_{4}}{D_{11}}\Big]\in \gr^{4,2} H^{4}_{dR}(\PP^{4}\backslash \cV(D_{11})) \hbox{ vanishes. } 
\end{equation}
It is enough to show that this latter class is non-zero.
Now,  following  the argument  in the paragraph after lemma \ref{l20}, consider the restriction of the form $\frac{\Omega_{4}}{D_{11}}$ to affine space $\alpha_{16}\neq 0$  
\begin{equation}
\beta_1=\frac{d\alpha_{12}\wedge d\alpha_{13}\wedge d\alpha_{14}\wedge d\alpha_{15}}{\widehat{D}}
\end{equation}    
Once again, nonvanishing of $[\beta_1]$ in $\gr^{4,2} H^{4}(\AAA^4\backslash\cV(\widehat{D}))$ implies  nonvanishing of $[\frac{\Omega_{4}}{D^{11}_G}]$ in $\gr^{4,2} H^{4}_{dR}(\PP^{4}\backslash \cV(D^{11}_G))$. 
After performing the change of the variables immediately preceding (\ref{c34}), we reduce to  proving that the class of the form 
\begin{equation}
\beta_2 = \frac{d\alpha'_{12}\wedge d\alpha'_{13}\wedge d\alpha_{14}\wedge d\alpha_{15}}{\widetilde{D}}
\end{equation}
(where  $\alpha'_{12}$ and $\alpha'_{13}$ are the rescaled versions of  $\alpha_{12}$ and $ \alpha_{13}$) is nonzero in $\gr^{4,2}H^{4}_{dR}(\AAA^{4}\backslash (\cV(\widetilde{D})\cup \cV(\Psi_{\gamma})))$. 
Let $V_1= \cV(\Psi_{\gamma})\backslash \cV(\widetilde{D}) \subset \AAA^3$. It is clearly smooth from the definition of $\Psi_{\gamma}$. We can therefore consider the 
  Gysin sequence 
\begin{equation}\label{c66}
H^2_{dR}(V_1)(-1)\rightarrow
H_{dR}^{4}(\AAA^3\backslash\cV(\widetilde{D}))
\rightarrow H^4_{dR}(\AAA^3\backslash (\cV(\widetilde{D})\cup \cV(\Psi_{\gamma})))\rightarrow
H^3_{dR}(V_1)(-1)
\end{equation}
Either by  the arguments in lemma \ref{l21}, or from the general bounds on  Hodge numbers, we have $\gr^{4,2} H^2_{dR}(V_1)(-1)=0$. It follows that  the map
$$\gr^{4,2} H_{dR}^{4}(\AAA^3\backslash\cV(\widetilde{D}))
\rightarrow \gr^{4,2} H^4_{dR}(\AAA^3\backslash (\cV(\widetilde{D})\cup \cV(\Psi_{\gamma})))$$ is injective, and it suffices to show that 
the class of $\beta_2$ in $\gr^{4,2}  H_{dR}^{4}(\AAA^3\backslash\cV(\widetilde{D}))$ is non-zero.
We can now  apply proposition \ref{propgeneric} one more time with respect to $\alpha_{14}$ (compare equation (\ref{c43})) and come to the form
\begin{equation}
\beta_3=\frac{d\alpha_{12}\wedge d\alpha_{13}\wedge d\alpha_{15}}{P}
\end{equation} 
where $P$ is defined in (\ref{c43.5}).
We  wish to show that its class  in $\gr^{3,1} H_{dR}^3(\AAA^3\backslash\cV(P))$ is non-zero.
We  next perform the change of variables given immediately preceding (\ref{c44}) and  come to  a class $[\beta_4]$  in $\gr^{3,1} H_{dR}^3(\AAA^3\backslash\cV(Q))$. The final change of the variables   
immediately following   (\ref{c44}) yields the form $\beta_5=\frac{d a\wedge d b\wedge d c}{J}$ on $\AAA^3\backslash \cV(J)$. Its cohomology class is the restriction to $d\neq 0$ of
\begin{equation} \label{lastform}
\Big[\frac{d a\wedge d b\wedge d c}{T}\Big]\in \gr^{3,1} H^3_{dR}(\PP^3\backslash \cV(T))
\end{equation}
A localization sequence again shows that it suffices to prove that this latter class is non-zero. After  desingularization (see the lines after (\ref{c49})), the  image  of $(\ref{lastform})$  in $\gr^{3,1} H^3_{dR}(\PP^3\backslash Y)$ 
maps   to the generator of $\gr^{2,0} H^2(Y)=H^0(Y,\Omega^2(Y))$ via the  residue map.  In particular, it is non-zero.
  The collection of  implications starting with (\ref{c63}) yields the non-vanishing of $[\omega_{G_8}] \in \gr^{13,11} H_{dR}^{15}(\PP^{15}\backslash X_{G_8})$. From the computations in the previous section and duality, we know that 
  this space is one-dimensional, so we conclude that 
   $$\gr^{13,11} H_{dR}^{15}(\PP^{15}\backslash X_{G_8}) \hbox{ is spanned by the class of } [\omega_{G_8}]\ .$$
Thus the non-Tate contribution to the cohomology of the graph hypersurface comes precisely from the class of the Feynman differential form. The period cannot therefore factorize (via some suitable notion of framed equivalence classes of motives, or motivic periods) through a category of mixed Tate motives.

\end{document}